\title{Decidability of the theory of modules over Pr\"{u}fer domains with dense value groups}
\author[]{Lorna Gregory}
\address[L.~Gregory]{Dipartimento di Matematica e Fisica, Universit\`a degli Studi della Campania ``Luigi
Vanvitelli'', Viale Lincoln 5, 81100 Caserta, Italy}
\email{lorna.gregory@gmail.com}
\author[]{Sonia L'Innocente}
\address[S.~L'Innocente]{University of Camerino, School of Science and Technologies,
Division of Mathematics, Via Madonna delle Carceri 9, 62032 Camerino, Italy}
\email{sonia.linnocente@unicam.it}
\author[]{Carlo Toffalori}
\address[C.~Toffalori]{University of Camerino, School of Science and Technologies,
Division of Mathematics, Via Madonna delle Carceri 9, 62032 Camerino, Italy}
\email{carlo.toffalori@unicam.it}
\thanks{The second and third authors thank the Italian GNSAGA-INdAM for its support.}
\keywords{Pr\"ufer domain, B\'ezout domain, Dense value group, Decidability}
\subjclass[2010]{03C60 (primary), 03C98, 03B25, 13F05}
\newtheorem{theorem}{Theorem}
\newtheorem*{theorem*}{Theorem}
\newtheorem{definition}[theorem]{Definition}
\newtheorem*{definition*}{Definition}
\newtheorem{lemma}[theorem]{Lemma}
\newtheorem{proposition}[theorem]{Proposition}
\newtheorem{remark}[theorem]{Remark}
\newtheorem*{cor*}{Corollary}
\newtheorem{fact}[theorem]{Fact}
\newtheorem*{conjecture*}{Conjecture}
\numberwithin{theorem}{section}
\renewcommand\iff{%
\ifmmode\text{ if and only if }%
\else if and only if \fi}
\renewcommand{\and}{\wedge}
\renewcommand{\phi}{\varphi}
\newcommand{\Span}{\text{Span}}
\newcommand{\rad}{\textnormal{rad}}
\renewcommand{\P}{\mathbb{P}}
\newcommand{\Ass}{\textnormal{Ass}}
\newcommand{\Div}{\textnormal{Div}}
\newcommand{\ann}{\textnormal{ann}}
\newcommand{\Zg}{\textnormal{Zg}}
\newcommand{\mfrak}[1]{\mathfrak{#1}}
\newcommand{\st}{\ \vert \ }
\newcommand{\N}{\mathbb{N}}
\begin{document}

\begin{abstract}
We provide algebraic conditions ensuring the decidability of the theory of modules
over effectively given Pr\"ufer (in particular B\'ezout) domains whose localizations at maximal ideals have dense value groups. For B\'ezout domains, these conditions are also necessary.
\end{abstract}

\maketitle

\pagestyle{plain}

\vspace{3mm}

\section{Introduction}\label{intro}

This paper contributes to a body of work characterizing when the theory of modules of a Pr\"{u}fer domain is decidable. This direction of research was initiated by Puninskaya, Puninski and the third author in \cite{PPT}, where it was shown that all effectively given valuation domains with dense Archimedean value group have decidable theory of modules. Confirming a conjecture in \cite{PPT}, the first author proved, in \cite{Gre15}, that the theory of modules of an effectively given valuation domain $V$ is decidable if and only if the set of pairs $(a,b)\in V^2$ such that $a\in\rad(bV)$ is recursive. This work was picked up in \cite{GLPT}, where a complete characterization of effectively given B\'ezout domains $R$ with infinite residue fields and decidable theory of modules was given in terms of the recursivity of a certain subset, $\text{DPR}(R)$, of $R^4$, generalizing the prime radical relation (see later in this introduction for a definition of this set). Analogous sufficient conditions were given for the theory of modules of an effectively given Pr\"{u}fer domain with infinite residue fields to be decidable.

In this article we characterize effectively given B\'ezout domains $R$ with decidable theory of modules under the assumption that the value groups of all localizations of $R$ at maximal ideals are dense. We also give a sufficient condition for the theory of modules of a Pr\"{u}fer domain to be decidable under the same assumption.

Thanks to the Baur-Monk theorem, if $R$ is a recursive ring then the theory of $R$-modules is decidable if and only if there exists an algorithm which, given $\phi_1/\psi_1,\ldots,\phi_h/\psi_h$ pairs of pp-formulae and intervals $[n_1,m_1],\ldots, [n_h,m_h]\subseteq \N\cup\{\infty\}$, answers whether there exists an $R$-module $M$ such that, for all $1\leq i\leq h$, $| \, \phi_i (M) / \psi_i(M) \, |\in [n_i,m_i]$. A standard argument means we may assume that $M$ is a finite direct sum of indecomposable pure injective $R$-modules.

Incidentally, the assumption {\sl \lq \lq $R$ recursive"}, as well as the stronger one {\sl \lq \lq $R$
is effectively given"}, are necessary to guarantee that the decidability problem of the theory of
$R$-modules makes sense. We will recall both of them in $\S$ 2.

Modern proofs of decidability for theories of modules roughly split into two steps. The first step of such a proof gives an algorithm which decides whether one Ziegler basic open set is contained in a finite union of other Ziegler basic open sets. Equivalently, it gives an algorithm, as in the previous paragraph, but where the intervals $[n_i,m_i]$ are either $[1,1]$ or $[2,\infty]$. When the Baur-Monk invariants $|\phi(M)/\psi(M)|$ of all $R$-modules $M$ are either infinite or $1$ then this is enough to show that the theory of $R$-modules is decidable.

However, when there exists $R$-modules with finite Baur-Monk invariants different from $1$, more work is required. This, the second step, usually amounts to a fine detailed analysis of the indecomposable pure injective $R$-modules $N$ such that there exists a pair of pp-formulae $\phi/\psi$ with $|\phi(N)/\psi(N)|$ finite but not equal to $1$. This analysis is then used to reduce to the case of the first step.

For B\'ezout and Pr\"ufer domains, the first step was dealt with in \cite{GLPT}. Note that, if $R$ is a Pr\"{u}fer domain with all residue fields infinite then for all $R$-modules $M$ and pairs of pp-formulae $\phi/\psi$, $| \, \phi(M) / \psi(M) \, |$ is either $1$ or infinite.

On the other hand it was shown, in \cite{PPT}, that if $V$ is valuation domain with finite residue field and dense value group then for each pp-pair $\phi/\psi$ there are at most finitely many indecomposable pure injective $V$-modules $N$ with $| \, \phi(N)/\psi(N) \, |$ finite but not equal to $1$. This makes the combinatorial problem of dealing with finite invariants sentences somewhat easier. In turn, since every indecomposable pure injective over a Pr\"{u}fer domain $R$ is the restriction of an indecomposable pure injective over some localization of $R$ at a maximal ideal, our assumption that $R_{\mfrak{m}}$ has dense value group for all maximal ideals $\mfrak{m}$, makes it easier to deal with finite invariants sentences in this case too.

Before explaining the content of this article in more detail, we fix some notation. For $R$ a ring, let $L_R$ denote the language of $R$-modules, and $T_R$ the theory of $R$-modules. Let $\mathbb{N}$ be the set of positive integers and $\mathbb{N}_0$ the set of the non-negative integers; $\mathbb{P}$ is the set of prime numbers (in $\mathbb{N}$). For $k_1, \ldots, k_s \in \mathbb{N}$,  $\text{\Span}_{\mathbb{N}_0} \{ k_1, \ldots, k_s \}$
denotes the set of linear combinations of $k_1, \ldots, k_s$ with coefficients in $\mathbb{N}_0$.

Our characterization of  B\'ezout domains with decidable theory of modules is based on two key sets.
\begin{itemize}
\item The first is $\text{DPR}(R)$, introduced in \cite[$\S$6]{GLPT}, that is, the set of 4-tuples $(a,b,c,d)$ in $R^4$ such that, for every choice of prime ideals $\mfrak{p}$, $\mfrak{q}$ of $R$ with $\mfrak{p} + \mfrak{q} \neq R$, either $a \in \mfrak{p}$, $b \notin \mfrak{p}$, $c \in \mfrak{q}$ or $d \notin \mfrak{q}$. We call $\text{DPR}(R)$ the {\sl double prime radical relation} because of its similarity to the {\sl prime radical relation} in $R$, $a \in \rad (bR)$ with $a, b \in R$. Note that, $a \in \rad (bR)$ if and only if, for every proper prime ideal $\mathfrak{p}$ of $R$, if $b \in \mathfrak{p}$, then $a \in \mathfrak{p}$, that is, either $a \in \mathfrak{p}$ or $b \notin \mathfrak{p}$. Hence $a \in \rad (bR)$ if and only if $(a, b, a, b) \in \text{DPR}(R)$.
\item The second set is inspired by the characterization of the (effectively given) commutative regular rings with decidable theory of modules given by Point and Prest in \cite{PP}. For this reason we denote it $\text{PP} (R)$. The set $\text{PP}(R)$ consists of the 4-tuples $(p,n,c,d) \in \mathbb{P} \times \N \times R^2$ such that there exist
positive integers $s, k_1,\ldots, k_s$ and maximal ideals $\mfrak{m}_1,\ldots,\mfrak{m}_s$ of $R$
for which $n\in\text{Span}_{\N_0}\{k_1,\ldots,k_s\}$ and for all $i = 1, \ldots, s$,
\begin{enumerate}
\item $|R/\mfrak{m}_i|=p^{k_i}$,
\item $c\in \mfrak{m}_i$,
\item $d\notin \mfrak{m}_i$.
\end{enumerate}
\end{itemize}

By definition, see $\S$\ref{prel}, the elements of an effectively given Pr\"ufer domain $R$ come equipped with an enumeration, so it makes sense to say that a subset of $R^n$ for some $n\in\N$ or $\bigcup_{n \in\N}R^n$ is recursive.

The main result for B\'ezout domains of this article is the following.

\medskip

\noindent
{\bfseries Theorem 6.2.} \textit{Let $R$ be an effectively given B\'ezout domain such that each localization of $R$ at a maximal ideal has dense value group. Then $T_R$ is decidable if and only if both $\text{DPR} (R)$ and $\text{PP}(R)$ are recursive.}

\medskip


For Pr\"{u}fer domains the situation is more complicated. This time we introduce, for every $l \in \N$,
two sets $\text{DPR}_l (R) \subseteq R^{2l+2}$ and $\text{PP}_l (R) \subseteq \P \times \N \times
R^{l+1}$.
\begin{itemize}
\item $\text{DPR}_l (R)$ is the set of $(2l+2)$-tuples $(a, b_1, \ldots, b_l, c, d_1, \ldots, d_l) \in
R^{2l+2}$ such that, for every choice of prime ideals $\mfrak{p},\mfrak{q}$ of $R$ with
$\mfrak{p} + \mfrak{q} \neq R$, either $a \in \mfrak{p}$, $c \in \mfrak{q}$, $b_j \notin \mfrak{p}$
or $d_j \notin \mfrak{q}$ for some $1\leq j \leq l$. Note that $\text{DPR}_1 (R) = \text{DPR} (R)$.

We put $\text{DPR}^\star (R) = \bigcup_{l \in \N}
\text{DPR}_l (R)$. Note that, when $R$ is B\'ezout and $\mfrak{p}$ is a proper prime ideal of $R$,
then $b_j \notin \mfrak{p}$ for some $1\leq j \leq l$ if and only if the greatest common divisor of
$b_1, \ldots, b_l$ is not in $\mfrak{p}$. Thus, for $R$ an effectively given B\'ezout domain,
$\text{DPR}^\star (R)$ is recursive if and only if $\text{DPR} (R)$ is.

Recall that the
sets $\text{DPR}_l (R)$ were already considered in \cite[$\S$7]{GLPT} in order to partially extend the main decidability theorem there from B\'ezout to Pr\"ufer domains with infinite residue fields.
As observed in \cite[Theorem 7.1]{GLPT}, an algorithm deciding membership of the $\text{DPR}_l (R)$ uniformly
in $l$ ensures, under the infinite residue fields
assumption, that $T_R$ itself is recursive\footnote{There is an omission in the statement of the published version of \cite[7.1]{GLPT}; the algorithms deciding membership of the various $\text{DPR}_l(R)$ need to be uniform in $l$ and this is also what is explicitly used in the proof.}. The existence of an algorithm deciding membership of the $\text{DPR}_l (R)$ uniformly
in $l$ is equivalent to $\text{DPR}^\star (R)$ being recursive. 


\item For every $l \in \N$, let $\text{PP}_l (R)$ consist of the tuples
$(p, n, c_1, \ldots, c_l, d) \in \P \times \N \times R^{l+1}$ such that there exist
positive integers $s, k_1,\ldots k_s$ and maximal ideals $\mfrak{m}_1,\ldots,\mfrak{m}_s$ of $R$
for which $n\in\text{Span}_{\N_0}\{k_1,\ldots,k_s\}$ and for all $i = 1, \ldots, s$,
\begin{enumerate}
\item $|R/\mfrak{m}_i|=p^{k_i}$,
\item $c_j \in \mfrak{m}_i$ for $1 \leq j \leq l$,
\item $d\notin \mfrak{m}_i$.
\end{enumerate}
Moreover put $\text{PP}^\star (R) = \bigcup_{l \in \N} \text{PP}_l (R)$. Once again, if $R$ is B\'ezout and
effectively given, then $\text{PP}^\star (R)$ is recursive if and only if $\text{PP} (R) = \text{PP}_1 (R)$
is. This is again because, if $\mfrak{m}$ is a maximal ideal of $R$, then $c_1, \ldots, c_l \in \mfrak{m}$ if
and only if the greatest common divisor of $c_1, \ldots, c_l$ is in $\mfrak{m}$.
\end{itemize}

As for B\'ezout domains, \cite[6.4]{GLPT}, if $R$ is an effectively given Pr\"ufer domain with decidable theory of modules then $\text{DPR}(R)$ is recursive. However, for Pr\"ufer domains, we don't know if $T_R$ decidable implies $DPR_{l}(R)$ is recursive for any $l\geq 2$. In particular, we don't know if $T_R$ decidable implies that $\text{DPR}^\star(R)$ is recursive.

\smallskip

\noindent
{\bfseries Theorem 3.2.} \textit{Let $R$ be an effectively given Pr\"ufer domain. If $T_R$ is decidable, then $\text{PP}^\star (R)$ is recursive.}

\smallskip

The main result for Pr\"ufer domains of this article is the following.

\smallskip

\noindent
{\bfseries Theorem 6.1.} \textit{Let $R$ be an effectively given Pr\"ufer domain such that each localization of $R$ at a maximal ideal has dense value group. If both $\text{DPR}^\star (R)$ and $\text{PP}^\star (R)$ are recursive, then $T_R$ is decidable.}

\smallskip

Section \ref{prel} provides some basic information about model theory of modules over Pr\"ufer and B\'ezout domains. In $\S$\ref{recur} we prove Theorem \ref{key1bis}. Sections \ref{order} and  \ref{prep} prepare for the proof of the main theorems. The proofs of \ref{key2bis} and \ref{key33} are contained in $\S$\ref{main}.

We assume some familiarity with basic model theory of modules, as illustrated in Prest's fundamental books \cite{Preb1}, \cite{Preb2} and in the capital paper \cite{Zi}. Pr\"ufer domains and in particular B\'ezout domains are treated in \cite{F-S} and \cite{Gi}. Other recent papers dealing with decidability of modules over B\'ezout domains or related questions include \cite{PT14}, \cite{LPT} and \cite{LPPT}, while \cite{PT15} provides a general treatment of the model theory of modules over B\'ezout domains.

Domains are assumed to be commutative with unity, and modules right unital.

\section{Preliminaries}\label{prel}

Recall that a domain $R$ is \emph{Pr\"ufer} if all its localizations at maximal ideals, and consequently at non-zero prime ideals, are valuation domains, and \emph{B\'ezout} if every 2-generated ideal (and consequently every finitely generated ideal) is principal. Thus $R$ is B\'ezout if and only if the so called B\'ezout identity holds: for every $0 \neq a, b\in R$ there are $c, u, v, g, h \in R$ such that $au+ bv= c$ and $cg=a$, $ch= b$ hold. Then $c$ is called a \emph{greatest common divisor} of $a$ and $b$ and is unique up to a multiplicative unit. B\'ezout domains are Pr\"ufer.

We will make frequent use of the following result of Tuganbaev \cite{Tu}.

\begin{fact}\label{Tuganbaev}
If $R$ is a Pr\"ufer domain then for all $a,b\in R$ there exist $\alpha,r,s\in R$ such that $b\alpha=as$ and $a(\alpha-1)=br$.
\end{fact}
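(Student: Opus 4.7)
The plan is to translate the existence of $\alpha, r, s$ into the single ideal-theoretic identity
\[
(aR : b) + (bR : a) = R,
\]
where $(aR : b) = \{x \in R : xb \in aR\}$ and $(bR : a) = \{y \in R : ya \in bR\}$, and then verify this identity locally using that every localization of $R$ at a maximal ideal is a valuation domain.

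First, the degenerate cases $a=0$ or $b=0$ are handled directly by $\alpha = 1$, $r=s=0$ or $\alpha = 0$, $r=s=0$ respectively, so assume $a,b$ are both nonzero. The two equations $b\alpha = as$ and $a(\alpha-1) = br$ say exactly that $\alpha \in (aR : b)$ and $1-\alpha \in (bR : a)$ (with $r$ being the negative of the witness that $(1-\alpha)a \in bR$). Hence such $\alpha$ exists if and only if $1 \in (aR : b) + (bR : a)$, i.e., the displayed equality of ideals holds.

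To establish the equality, I would check it after localization at each maximal ideal $\mfrak{m}$ of $R$. Since $(b)$ and $(a)$ are principal, hence finitely generated, formation of these conductor ideals commutes with localization, giving
\[
\bigl((aR : b) + (bR : a)\bigr) R_\mfrak{m} = (aR_\mfrak{m} : bR_\mfrak{m}) + (bR_\mfrak{m} : aR_\mfrak{m}).
\]
Because $R$ is Pr\"ufer, $R_\mfrak{m}$ is a valuation domain, so one of $a \mid b$ or $b \mid a$ holds in $R_\mfrak{m}$. In the first case $(aR_\mfrak{m} : bR_\mfrak{m}) = R_\mfrak{m}$; in the second, $(bR_\mfrak{m} : aR_\mfrak{m}) = R_\mfrak{m}$. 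Either way, the right-hand side is all of $R_\mfrak{m}$. Since the sum of ideals on the left is not contained in any maximal ideal, it must equal $R$. Writing $1 = \alpha + \beta$ with $\alpha \in (aR : b)$ and $\beta \in (bR : a)$ then yields $\alpha$ together with $s$ (from $\alpha b = a s$) and $r$ (from $\beta a = -br$, setting $r = -r'$ for the actual witness $r'$).

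There is no serious obstacle in this argument; its content is almost entirely the valuation-domain dichotomy ``$a\mid b$ or $b\mid a$'' in each localization $R_\mfrak{m}$, which is the defining feature of the Pr\"ufer hypothesis. The only point requiring a moment's care is the commutation of conductor ideals with localization, which is why it is important that both $(a)$ and $(b)$ are finitely generated.
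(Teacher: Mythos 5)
The paper states this result as a \emph{Fact} and cites Tuganbaev \cite{Tu} without proof, so there is no in-paper argument to compare your proposal against; I can only assess the proposal on its own merits.

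Your argument is correct and clean. Translating ``there exist $\alpha,r,s$ with $b\alpha=as$ and $a(\alpha-1)=br$'' into the ideal equality $(aR:b)+(bR:a)=R$ is exactly right, and the rest is a standard local-global check: conductors by finitely generated ideals commute with localization, and in each $R_\mfrak{m}$ the valuation-domain dichotomy $a\mid b$ or $b\mid a$ forces one of the two localized conductors to be the whole ring, so the sum is not contained in any maximal ideal. Extracting $\alpha$, $s$, $r$ from $1=\alpha+\beta$ at the end is also handled correctly (including the sign on $r$).

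One small slip: your degenerate cases are swapped. If $a=0$ you need $b\alpha=0$, so $\alpha=0$ (not $\alpha=1$) works with $r=s=0$; if $b=0$ you need $a(\alpha-1)=0$, so $\alpha=1$ (not $\alpha=0$) works. In fact these cases are unnecessary, since the main argument already covers them: in a domain, if exactly one of $a,b$ is zero then one of $(aR:b)$, $(bR:a)$ is already $R$, and if both are zero both conductors are $R$. You may simply drop the degenerate-case paragraph.
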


When proving decidability results about $T_R$, for $R$ a Pr\"ufer domain, we will work under the hypothesis that $R$ is effectively given. A Pr\"ufer domain $R$ is  \emph{effectively given} if it is countable and its elements can be listed as $a_0=0, a_1= 1, a_2, \dots$ (possibly with repetitions) so that suitable algorithms effectively execute the following, when $m, n$ range over natural numbers.
\begin{enumerate}
\item Deciding whether $a_m= a_n$ or not.
\item Producing $a_m+ a_n$ and $a_m\cdot a_n$, or rather indices of these elements in the list.
\item Establishing whether $a_m$ divides $a_n$.
\end{enumerate}
This is a natural assumption to ensure that the decidability problem of $T_R$ makes sense. In particular, if $(1)$ and $(2)$ hold then $T_R$ is recursively axiomatizable. Moreover, in order for the theory of $R$-modules to be decidable, conditions $(1)-(3)$ must hold. For instance, $a_m$ divides $a_n$ if and only if the sentence $\forall x (xa_m=0\rightarrow xa_n=0)$ is in $T_R$.

\begin{fact}\label{pp1}
Every pp-$1$-formula over a Pr\"ufer domain is equivalent to a finite sum of
formulae $\exists y (ya =x \land yb = 0)$, with $a, b \in R$, and also to a finite conjunction of formulae $c | xd$, with $c, d \in R$ (see \cite[2.2]{PT15}). Over a B\'ezout domain $R$ a stronger result holds, since every pp-1-formula is equivalent to a finite sum of formulae of the form $a|x \, \land xb=0$, $a, b \in R$, and also to a finite conjunction of formulae $c|x \, + \, xd =0$ with $c, d \in R$ (see for instance \cite[2.3]{PT15}).
\end{fact}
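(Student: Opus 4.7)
This is a normal-form theorem for pp-1-formulae, and the proofs are given in the cited \cite[2.2, 2.3]{PT15}. My plan would follow the standard matrix-reduction route.

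I would start from the general representation of a pp-1-formula $\phi(x) \equiv \exists \bar{y}\,(x\bar{a} + \bar{y}B = 0)$, for some row vector $\bar{a}$ and matrix $B$ over $R$. The strategy is to simplify the pair $(\bar{a}, B)$ iteratively by row/column operations so that at each step the formula either decomposes as a sum of two simpler pp-formulae or collapses to a one-witness formula. Over a Pr\"ufer domain the basic pivoting step is Fact \ref{Tuganbaev}: given two entries $a, b$ appearing in the same row of the system, the elements $\alpha, r, s$ with $b\alpha = as$ and $a(\alpha-1) = br$ allow a controlled elimination producing, on one branch, a purely annihilator condition, and on the other, a divisibility-plus-annihilator piece. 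Iterating the reduction down to single-witness summands yields the promised sum of formulae $\exists y(ya=x \land yb = 0)$.

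The conjunction form $c \mid xd$ then follows by elementary duality for pp-formulae, which swaps sums and meets in the pp-lattice and sends each basic summand $\exists y(ya=x \land yb=0)$ to a basic conjunct of the form $c \mid xd$. Alternatively, one can dualize the matrix presentation directly: rewriting the defining system as constraints on $x$ of the shape \emph{"divisibility of $xd$ by a column combination of rows of $B$"} produces the conjunction normal form without appealing to duality.

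For the B\'ezout case the stronger shape reflects that finitely generated ideals are principal. Inside a summand $\exists y(ya=x \land yb=0)$, the B\'ezout identity lets us decouple the witness for $ya=x$ from the condition $yb=0$ (absorbing gcds into $a$ and $b$), so the summand becomes $a \mid x \land xb = 0$; dually, the conjunction of divisibility conditions collapses to $c \mid x \land xd = 0$. The main technical point throughout is the combinatorial bookkeeping in the matrix-reduction induction, ensuring that each application of Fact \ref{Tuganbaev} strictly reduces a well-chosen complexity measure on $(\bar{a}, B)$; the full details appear in \cite{PT15}, so here I would simply invoke that reference.
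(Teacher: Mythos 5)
The paper does not actually prove this statement: it is quoted as a Fact with the proof deferred entirely to \cite[2.2, 2.3]{PT15}, which is exactly what you do as well, so your treatment is consistent with the paper's. Your interpolated sketch (matrix reduction driven by Fact \ref{Tuganbaev}, elementary duality to pass between the sum and conjunction forms, and gcd absorption in the B\'ezout case) goes beyond anything the paper verifies and is only heuristic at the key Pr\"ufer sum-decomposition step, but since both you and the authors ultimately rest the statement on \cite{PT15}, nothing more is required here.
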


We will denote a pp-1-pair, that is, an ordered pair of pp-1-formulae $\phi, \psi$, by $\phi / \psi$. Of course we will be mainly interested in the cases when $\phi$, $\psi$ have one of the forms in Fact \ref{pp1}.

For $\phi$, $\psi$ pp-1-formulae and $t$ a positive integer, let $| \phi \, / \,
\psi| \geq t$ be the sentence of $L_R$ saying that the index of the pp-subgroup defined by $\phi \land \psi$ inside that defined by $\phi$ is  at least $t$. We call such sentences {\sl invariants sentences} and for $N$ an $R$-module, we call the values of $| \phi(N) \, : \, \phi(N) \cap \psi(N) |$ (either finite or $\infty$) the {\sl Baur-Monk invariants} of $N$. In the following we will abbreviate $| \phi(N) \, : \, \phi(N) \cap \psi(N) |$ by $| \phi / \psi (N) |$. We will say that an $R$-module $N$ {\sl opens} a pp-1-pair $\phi / \psi$ when $|\phi / \psi (N) | > 1$. The already mentioned Baur-Monk theorem, \cite[2.15]{Preb1} asserts that, relative to $T_R$, every sentence in $L_R$ is equivalent to a boolean combination of invariants sentences. Moreover, \cite[2.18]{Preb1}, an $R$-module $N$ is determined up to elementary equivalence by its Baur-Monk invariants.

We will need the following well-known argument: Let $\sigma$ be a boolean combination of invariants sentences. Since, \cite[4.36]{Preb1}, every $R$-module is elementary equivalent to a direct sum of indecomposable pure injective modules, $\sigma$ is satisfied by some $R$-module if and only if $\sigma$ is satisfied by a direct sum of indecomposable pure injective $R$-modules. Essentially because solution sets of pp-formulae commute with direct sums, if $\sigma$ is satisfied by a direct sum of indecomposable pure injectives then $\sigma$ is satisfied by a finite direct sum of indecomposable pure injectives.

Thus, in order to prove that an effectively given Pr\"ufer domain $R$ has decidable theory of modules, it is enough to show that there is an algorithm which, given a conjunction of invariants sentences and negations of invariants sentences $\sigma$, answers whether there exists a finite direct sum of indecomposable pure injective $R$-modules satisfying $\sigma$.

\begin{fact}\label{ppuni}
If $R$ is a Pr\"{u}fer domain and $N$ is an indecomposable pure injective $R$-module then $N$ is \emph{pp-uniserial}, i.e. its lattice of pp-definable subgroups is totally ordered.
\end{fact}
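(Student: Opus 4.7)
My plan is to reduce to the case of an indecomposable pure injective module over a valuation domain, where the result is classical. Since $N$ is indecomposable pure injective, its endomorphism ring $\End(N)$ is local; pulling back the maximal ideal of $\End(N)$ along the structural map $R \to \End(N)$ yields a prime ideal $\mfrak{p}$ of $R$ (primeness follows because the image of $R$ lies in the center of $\End(N)$, so non-units form a prime) such that every element of $R \setminus \mfrak{p}$ acts invertibly on $N$. Consequently $N$ carries a canonical $R_{\mfrak{p}}$-module structure, and $R_{\mfrak{p}}$ is a valuation domain by the Pr\"ufer hypothesis.

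Next I would verify that the lattices of pp-1-definable subgroups of $N$ over $R$ and over $R_{\mfrak{p}}$ coincide. One inclusion is immediate since every pp-1-$R$-formula is also a pp-1-$R_{\mfrak{p}}$-formula. For the other, by Fact~\ref{pp1} every pp-1-formula over $R_{\mfrak{p}}$ is a finite sum of basic formulae $\exists y(y\tfrac{a}{s} = x \wedge y\tfrac{b}{t} = 0)$ with $a,b \in R$ and $s,t \in R \setminus \mfrak{p}$. Since $s$ and $t$ act invertibly on $N$, after clearing denominators each such formula is equivalent on $N$ to a pp-1-formula with parameters in $R$, whence the two lattices agree.

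Finally I would invoke the classical result, going back to Ziegler and Puninski, that every indecomposable pure injective module over a valuation domain is pp-uniserial. The heart of that argument is: by Fact~\ref{pp1} each pp-1-formula reduces to a finite sum of basic formulae $\exists y(ya = x \wedge yb = 0)$, and since a sum of subgroups is comparable to a given subgroup if and only if each summand is, it suffices to compare two such basic subgroups; one then uses the linear order on divisibility in the valuation domain, together with the locality of $\End(N)$, to place any two such subgroups in one of the two possible inclusions.

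The main obstacle I expect is this last step: while divisibility in $R_{\mfrak{p}}$ is linearly ordered, the joint occurrence of the two parameters $a$ and $b$ in a basic formula means that comparing the $a$'s and the $b$'s separately does not immediately yield comparability of the corresponding subgroups on $N$. One resolves this by exploiting that, for each $r \in R_{\mfrak{p}}$, multiplication by $r$ on $N$ is either a unit in $\End(N)$ or lies in its Jacobson radical; a short case analysis on these alternatives for the auxiliary elements produced by Tuganbaev-style identities (Fact~\ref{Tuganbaev}) then forces one of the two desired inclusions and completes the proof.
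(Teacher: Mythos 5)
Your proposal is correct, but it takes a genuinely different route from the paper. The paper disposes of the fact in one line by appealing to a general principle: the lattice of pp-1-formulae over a Pr\"ufer domain is distributive (citing Eklof--Herzog), and over a ring with distributive pp-lattice every indecomposable pure injective has a totally ordered lattice of pp-definable subgroups (citing Puninski). You instead argue concretely: localize $R$ at the prime $\mfrak{p}$ obtained by pulling back the maximal ideal of the local ring $\End(N)$, observe that $N$ is naturally an $R_{\mfrak{p}}$-module with the same lattice of pp-definable subgroups over $R$ and over $R_{\mfrak{p}}$ (because elements outside $\mfrak{p}$ act as automorphisms, so denominators can be cleared), note that $R_{\mfrak{p}}$ is a valuation domain (or the fraction field when $\mfrak{p}=0$, a degenerate case that still works), and then invoke the classical pp-uniseriality of indecomposable pure injectives over valuation domains. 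Both routes are sound, and in fact the paper itself uses exactly your localization mechanism elsewhere in \S 2 when it recalls that every point of $\Zg_R$ comes from some $\Zg_{R_{\mfrak{m}}}$. The paper's citation-based argument is shorter and applies more broadly (e.g.\ to serial rings), while yours is self-contained modulo the valuation-domain case and makes the underlying geometry explicit. One small remark: your closing paragraph invokes Tuganbaev-style identities for the valuation-domain step, but these are unnecessary there --- over a valuation domain divisibility is already linearly ordered, and the comparison of basic subgroups $aN \cap \ann_N(b)$ is handled directly by that order together with the dichotomy ``$r$ acts as a unit on $N$ or $r \in \rad\,\End(N)$''; Tuganbaev's identity is precisely the device this paper uses to simulate that linear order at the level of a general Pr\"ufer domain, not something needed after you have already localized.
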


This follows from \cite[3.3]{Pun}, recalling that the lattice of pp-1-formulae over a Pr\"ufer domain is distributive \cite[3.1]{EkHe}.
Recall also that a module $N$ is said to be \emph{uniserial} if the lattice of all its submodules is totally ordered.

For every $R$-module $N$ we put
\[\Ass \, N:=\{r\in R \, \mid \, \text{ there exists } m\in N\backslash\{0\} \text{ with } mr=0\}\]
and
\[\Div \, N:= \{r \in R \, \mid \, r  \not | \, m \; \text{ for some } m\in N\} . \]
If $N$ is an indecomposable pure injective module over a Pr\"ufer domain $R$, then
$\Ass \, N$ and $\Div \, N$ and their union $\Ass \, N \cup \Div \ N$ are (proper) prime ideals of $R$
(see \cite[Lemma 2.7]{GLPT}).

The {\sl Ziegler spectrum} of a ring $R$, $\Zg_R$, is a topological space whose points are (isomorphism classes of) indecomposable pure injective $R$-modules, and whose topology is given by basic open sets of the form $(\phi / \psi)$ where $\phi$ and $\psi$ range over pp-1-formulae of $L_R$. Recall that an open set $(\phi / \psi)$ consists of the $R$-modules $N$ in $\Zg_R$ such that $\phi(N)$ strictly includes its intersection with $\psi(N)$.

For any commutative ring $R$, if $N\in\Zg_{R_{\mfrak{m}}}$ for some maximal ideal $\mfrak{m}$ of $R$ then $N$ restricted to $R$ is an indecomposable pure injective $R$-module. This gives, \cite[5.53]{Preb2}, a homeomorphic embedding of $\Zg_{R_{\mfrak{m}}}$ into $\Zg_R$ as a closed subset.  Moreover, for all $N\in\Zg_R$, there exists a maximal ideal $\mfrak{m}$ of $R$ such that $N$ is the restriction of an indecomposable pure injective $R_{\mfrak{m}}$-module (see for instance \cite[6.4]{Gre13}).

Let us also recall the correspondence, over a valuation domain $V$, between ordered pairs of proper ideals of $V$ and indecomposable pp-1-types over $V$. The indecomposable pp-1-type associated to an ordered pair $(I,J)$ of ideals is just the unique complete pp-1-type $p = p(I, J)$ such that, for all $r \in V$,
\begin{itemize}
\item $xr = 0 \in p$ if and only if $r \in I$ and
\item $r \mid x \in p$ if and only if $r \notin J$,
\end{itemize}
see \cite[3.4]{EkHe} and \cite{GLPT} for more details. Via these indecomposable pp-1-types, pairs
of ideals also correspond to indecomposable pure injective $V$-modules. For every pair $(I, J)$,
let $\text{PE}(I, J)$ denote the indecomposable pure injective $V$-module associated to it
as the pure injective hull of the indecomposable pp-1-type $p(I, J)$. So PE means pure injective hull. The equivalence relation
connecting two pairs $(I, J)$ and $(I', J')$ if and only if $\text{PE}(I,J)$ and $\text{PE}(I',J')$ are isomorphic is well characterized, see again \cite[3.4]{EkHe}.

Before finishing this section we mention a slight peculiarity. The very attentive reader of \cite{GLPT} and this article might be puzzled by the fact that we never actually need to use condition $(3)$ of the definition of an effectively given Pr\"ufer domain. Combined with \ref{Tuganbaev}, the following remark implies that if $R$ is a recursive Pr\"{u}fer domain (i.e. $(1)$ and $(2)$ in the definition of ``effectively given'' hold) and $\text{DPR}(R)$ is recursive then condition $(3)$ in the definition of an effectively given Pr\"{u}fer domain must also hold.

\begin{remark}\label{rem}
Let $a,b\in R\backslash\{0\}$ and $\alpha,s,r\in R$ be such that $b\alpha=as$ and $a(\alpha-1)=br$. Then $b\in aR$ if and only if $(1,\alpha,1,r)\in\text{DPR}(R)$.
\end{remark}
\begin{proof}
For any domain $R$, $b\in aR$ if and only if $b\in aR_{\mfrak{p}}$ for all proper prime ideals $\mfrak{p}$. By \cite[5.5]{GLPT}, $b\in aR_{\mfrak{p}}$ if and only if $\alpha\notin \mfrak{p}$ or $r\notin \mfrak{p}$. So $b\in aR$ if and only if for all prime ideals $\mfrak{p}$, $\alpha\notin \mfrak{p}$ or $r\notin\mfrak{p}$.

From the definition of $\text{DPR}(R)$, it follows that $(1,\alpha,1,r)\in \text{DPR}(R)$ if and only if for all primes $\mfrak{p},\mfrak{q}$ such that $\mfrak{p}+\mfrak{q}\neq R$, $\alpha\notin \mfrak{p}$ or $r\notin \mfrak{q}$.

If $(1,\alpha,1,r)\in \text{DPR}(R)$ then, setting $\mfrak{p}=\mfrak{q}$ in the definition of $\text{DPR}(R)$, it follows that, for all proper primes $\mfrak{p}$, $\alpha\notin \mfrak{p}$ or $r\notin \mfrak{p}$.

Conversely, suppose that for all proper prime ideals $\mfrak{p}'$, $\alpha\notin \mfrak{p}'$ or $r\notin\mfrak{p}'$. Suppose that $\mfrak{p},\mfrak{q}$ are prime ideals and $\mfrak{p}+\mfrak{q}\neq R$. Since $R$ is a Pr\"{u}fer domain, $\mfrak{p}+\mfrak{q}\neq R$ implies $\mfrak{p}\subseteq \mfrak{q}$ or $\mfrak{q}\subseteq \mfrak{p}$. Let $\mfrak{p}'=\mfrak{p}\cup\mfrak{q}$. Then $\alpha\notin \mfrak{p}'$ implies $\alpha\notin \mfrak{p}$ and $r\notin\mfrak{p}'$ implies $r\notin \mfrak{q}$. Therefore $(1,\alpha,1,r)\in \text{DPR}(R)$.
\end{proof}

\section{Recursive sets}\label{recur}

Throughout this section $R$ will be a Pr\"{u}fer domain. However, we will not require that the localizations of $R$ at maximal ideals have dense value groups.

\begin{lemma}\label{nolab}
All non-zero finite modules over a Pr\"{u}fer domain $R$ are of the form $\prod_{i=1}^h R/\mfrak{m}_i^{\lambda_i}$ where $\mfrak{m}_i$ is a maximal ideal and $\lambda_i\in\N$ for every $1\leq i\leq h$. If for each maximal ideal $\mfrak{m}$, $R_\mfrak{m}$ has dense value group then all non-zero finite modules are of the form $\prod_{i=1}^h R/\mfrak{m}_i$ where each $\mfrak{m}_i$ is a maximal ideal.
\end{lemma}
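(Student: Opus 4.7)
The plan is a two-step reduction: first from $R$ to finitely many local pieces via the Chinese Remainder Theorem, and then from a valuation domain to a finite Artinian chain ring quotient. Being non-zero and finite, $M$ has finite length as an $R$-module, and its composition factors are simple modules $R/\mfrak{m}$ for finitely many distinct maximal ideals $\mfrak{m}_1, \ldots, \mfrak{m}_h$. Iterating the observation that $\mfrak{m}_i$ annihilates $R/\mfrak{m}_i$, the composition series yields $\prod_{i=1}^h \mfrak{m}_i^{e_i} \sseq \ann(M)$ for suitable $e_i \in \N$. Since the $\mfrak{m}_i^{e_i}$ are pairwise comaximal, the Chinese Remainder Theorem gives $R / \bigcap_i \mfrak{m}_i^{e_i} \cong \prod_i R/\mfrak{m}_i^{e_i}$ and hence a natural decomposition $M \cong \bigoplus_{i=1}^h M_i$, in which each $M_i$ is a finite module annihilated by a power of $\mfrak{m}_i$, equivalently a finite $R_{\mfrak{m}_i}$-module.

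Fix $i$, write $V = R_{\mfrak{m}_i}$ with maximal ideal $\mfrak{n}$, and consider an ideal $I \sseq V$ with $V/I$ finite. The key local claim is that $I = \mfrak{n}^\lambda$ for some $\lambda$. Taking a composition series $V = I_0 \supsetneq I_1 \supsetneq \ldots \supsetneq I_\lambda = I$ with $I_{j-1}/I_j \cong V/\mfrak{n}$, one has $\mfrak{n} I_{j-1} \sseq I_j$ and trivially $I_1 = \mfrak{n}$; inductively, if $I_j = \mfrak{n}^j$, then $\mfrak{n}^{j+1} \sseq I_{j+1} \subsetneq \mfrak{n}^j$, and since $\mfrak{n}^j/\mfrak{n}^{j+1}$ is one-dimensional over $V/\mfrak{n}$ whenever $\mfrak{n}^{j+1} \subsetneq \mfrak{n}^j$ (i.e., exactly when $\mfrak{n}$ is principal), total ordering of the ideal lattice of $V$ forces $I_{j+1} = \mfrak{n}^{j+1}$. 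Because the ideals of $V$ are totally ordered, $\ann(M_i) = \ann(m_0)$ for some $m_0 \in M_i$, so $V/\ann(M_i) \cong V m_0 \sseq M_i$ is finite and therefore equals $V/\mfrak{n}^{\mu_i}$ for some $\mu_i$. Then $M_i$ is a finitely generated module over the Artinian local principal ideal ring $V/\mfrak{n}^{\mu_i}$, and the classical structure theorem for such modules yields $M_i \cong \bigoplus_j V/\mfrak{n}^{\lambda_{ij}} \cong \bigoplus_j R/\mfrak{m}_i^{\lambda_{ij}}$, establishing the first claim.

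For the dense case, density of the value group of $V$ provides, for every $x \in \mfrak{n}$, some $y \in V$ with $0 < v(y) < v(x)$; then $z := x/y \in \mfrak{n}$ and $x = yz \in \mfrak{n}^2$, so $\mfrak{n}^2 = \mfrak{n}$. Consequently $\mfrak{n}^{\mu_i} = \mfrak{n}$ for every $\mu_i \geq 1$, so $V/\mfrak{n}^{\mu_i}$ is the finite residue field and each summand of $M_i$ collapses to $R/\mfrak{m}_i$. The main obstacle I anticipate is invoking the structure theorem on the Artinian chain ring $V/\mfrak{n}^{\mu_i}$, since $V$ itself need not be Noetherian; however, $V/\mfrak{n}^{\mu_i}$ is Artinian with ideals precisely the powers $\mfrak{n}^j/\mfrak{n}^{\mu_i}$ by the preceding analysis, so this is exactly the setting where the decomposition of finitely generated modules into cyclics is classical. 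In the dense case the obstacle evaporates, as everything reduces to linear algebra over the finite field $V/\mfrak{n}$.
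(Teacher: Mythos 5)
Your proof is correct but takes a genuinely different route from the paper's. The paper reduces to the local pieces by citing that finite modules are pure injective, hence decompose into indecomposable pure injectives, each of which is the restriction of a module over some localization $R_\mfrak{m}$; you reach the same local decomposition elementarily via the composition series and the Chinese Remainder Theorem (a finite module has finite length, so a product of powers of finitely many maximal ideals annihilates it, and comaximality splits $M$ into summands over $R/\mfrak{m}_i^{e_i}$). Your local analysis is also more explicit than the paper's: the paper asserts that a finite module over the valuation domain $R_\mfrak{m}$ lives over $R_\mfrak{m}/\pi^n R_\mfrak{m}$ (when $\mfrak{m}R_\mfrak{m}$ is principal, generated by $\pi$) or over the residue field, implicitly invoking the classification of finite-index ideals of a valuation domain, which is exactly what your key local claim supplies, together with the observation that $\mfrak{n}^{j+1}\subsetneq\mfrak{n}^j$ for some $j\geq 1$ already forces $\mfrak{n}$ principal. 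Both proofs then rest on the classical cyclic decomposition of finitely generated modules over a finite (hence Artinian) local chain ring, and both handle the dense case by noting $\mfrak{m}R_\mfrak{m}=(\mfrak{m}R_\mfrak{m})^2$. Your route avoids the model-theoretic machinery entirely, which is arguably more natural for a statement of pure commutative algebra, whereas the paper's phrasing stays inside the pure-injectivity framework it uses throughout.
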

\begin{proof}
Any finite module may be written as a direct sum of indecomposable finite modules, and finite modules are pure injective. Since $R$ is commutative, every indecomposable pure injective is the restriction of a module over $R_\mfrak{m}$ for some maximal ideal $\mfrak{m}$. Since $R$ is Pr\"{u}fer, $R_\mfrak{m}$ is a valuation domain.  If $M$ is a finite module over $R_\mfrak{m}$ then $M$ is a module over a finite quotient of $R_\mfrak{m}$. Thus $M$ is either a module over $R_\mfrak{m}/\pi^nR_\mfrak{m}$ where $\pi$ generates $\mfrak{m}R_\mfrak{m}$ and $R_\mfrak{m}/\mfrak{m}R_\mfrak{m}$ is finite, or $M$ is a module over $R/\mfrak{m}$ where $\mfrak{m}$ is the maximal ideal of $R$ and $R/\mfrak{m}$ is finite.

In order to get the desired result we now just need to note that if $\pi$ generates the maximal ideal of $R_\mfrak{m}$ then $R_\mfrak{m}/\pi^\lambda R_\mfrak{m}$ is isomorphic to the $R$-module $R/\mfrak{m}^\lambda$ for every positive integer $\lambda$.

Finally, when all the value groups are dense, $\mfrak{m} = \mfrak{m}^2$ for every $\mfrak{m}$.
\end{proof}

\begin{theorem}\label{key1bis}
Let $R$ be an effectively given Pr\"ufer domain. If $T_R$ is decidable, then $\text{PP}^\star (R)$ is recursive.
\end{theorem}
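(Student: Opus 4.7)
The plan is to reduce membership in $\text{PP}^\star(R)$ to a consistency problem over $T_R$. Given a candidate tuple $(p, n, c_1, \ldots, c_l, d) \in \P \times \N \times R^{l+1}$, I will construct, uniformly in the input, a Boolean combination $\sigma$ of invariants sentences such that some $R$-module satisfies $\sigma$ if and only if the tuple belongs to $\text{PP}^\star(R)$. Since this is equivalent to $\neg\sigma \notin T_R$, the assumed decidability of $T_R$ furnishes the desired algorithm.

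The sentence $\sigma$ will capture three requirements on a model $M$: that $|M| = p^n$, that each $c_j$ annihilates $M$, and that multiplication by $d$ is bijective on $M$. These are expressed, respectively, by the invariants condition $|(x=x)\,/\,(x=0)| = p^n$ (the conjunction of $|(x=x)\,/\,(x=0)| \geq p^n$ with the negation of $|(x=x)\,/\,(x=0)| \geq p^n+1$); the conditions $|(x=x)\,/\,(xc_j=0)| = 1$ for $j = 1, \ldots, l$; and the pair $|(xd=0)\,/\,(x=0)|=1$ together with $|(x=x)\,/\,\exists y\,(yd=x)| = 1$. All ingredients are invariants sentences, and since $R$ is effectively given, $\sigma$ is produced algorithmically from the input.

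For the equivalence, if $(p, n, c_1, \ldots, c_l, d) \in \text{PP}_l(R)$ is witnessed by $\mfrak{m}_1, \ldots, \mfrak{m}_s$ with $|R/\mfrak{m}_i| = p^{k_i}$ and $n = \sum_i \lambda_i k_i$ for some $\lambda_i \in \N_0$, then $M = \bigoplus_{i=1}^s (R/\mfrak{m}_i)^{\lambda_i}$ satisfies $\sigma$: it has cardinality $p^n$, is annihilated by each $c_j$ because $c_j \in \mfrak{m}_i$ whenever $\lambda_i > 0$, and $d$ acts as a unit on every simple summand since $d \notin \mfrak{m}_i$. Conversely, any model $M$ of $\sigma$ is finite of cardinality $p^n$, so Lemma \ref{nolab} provides an isomorphism $M \cong \prod_{i=1}^h R/\mfrak{m}_i^{\lambda_i}$ with $\lambda_i \in \N$; the cardinality forces $|R/\mfrak{m}_i| = p^{k_i}$ for some $k_i \in \N$ with $\sum_i \lambda_i k_i = n$, the annihilation clauses give $c_j \in \mfrak{m}_i^{\lambda_i} \subseteq \mfrak{m}_i$ for all $i, j$, and bijectivity of $d$ on each factor forces $d \notin \mfrak{m}_i$. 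Hence the $\mfrak{m}_i$'s witness membership in $\text{PP}^\star(R)$. The key input is Lemma \ref{nolab}, which pins down the shape of every finite $R$-module; beyond that the argument is an effective encoding of the defining clauses of $\text{PP}^\star(R)$, and I do not foresee a serious obstacle.
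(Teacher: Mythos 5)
Your proof is correct and takes essentially the same route as the paper: reduce membership in $\text{PP}^\star(R)$ to an effectively constructible consistency question over $T_R$, and use Lemma \ref{nolab} to pin down the structure of finite $R$-modules. The only variation is that you demand $d$ act bijectively rather than merely injectively; since any model of $\sigma$ has cardinality $p^n$ and is therefore finite, the two conditions are equivalent, and your version in fact slightly streamlines the deduction that $d \notin \mfrak{m}_i$ (surjectivity of $d$ on $R/\mfrak{m}_i^{\lambda_i}$ yields $dR + \mfrak{m}_i^{\lambda_i} = R$, impossible when $d \in \mfrak{m}_i$), whereas the paper argues from injectivity via a short case split on $\lambda_i$ and whether $\mfrak{m}_i$ is idempotent.
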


\begin{proof}

We claim that, for all $p \in \P$, $n \in \N$, $l \in \N$, $(c_1, \ldots, c_l) \in R^l$ and $d \in R$, $(p,n, c_1, \ldots, c_l, d)$ is in $\text{PP}^\star (R)$ (so in $\text{PP}_l (R)$)  if and only if there is an $R$-module $M$ such that $|M|=p^n$, for each $1 \leq j \leq l$, $c_j\in \ann_RM$ (the annihilator of $M$ over $R$) and for all $m\in M$, $md=0$ implies $m=0$. Note that the condition that $M$ has to satisfy can be expressed as a first order sentence of $L_R$ in terms of $p, n, d$ and the $c_j$.
Moreover there is an effective procedure which, given a tuple $\pi = (p, n, c_1, \ldots, c_l, d)$ with $(c_1, \ldots, c_l)$ of arbitrary length $l$, produces this sentence $\theta_\pi$.
Since $T_R$ is decidable, the set of all sentences of $L_R$ true in at least one $R$-module is recursive. Applying that to the tuples $\pi$ and the corresponding sentences $\theta_\pi$, we obtain an algorithm deciding, for any given $\pi$, whether there exists an $R$-module $M$ satisfying $\theta_\pi$ as required.
Hence $\text{PP}^\star (R)$ is recursive, provided that we prove our claim. Then let us do that.

First suppose that $(p,n,c_1, \ldots, c_l, d)\in\text{PP}_l (R)$. Let $\mfrak{m}_1,\ldots,\mfrak{m}_s$ and $k_1,\ldots$, $k_s$ be as asked in the definition of $\text{PP}_l (R)$, and let $\lambda_1,\ldots,\lambda_s\in\N_0$ satisfy $\sum_{i=1}^s\lambda_ik_i=n$. Put $M=\prod_{i=1}^s(R/\mfrak{m}_i)^{\lambda_i}$. So $|M|=p^{\sum_{i=1}^s\lambda_ik_i}=p^n$. Since for $1 \leq j \leq l$, $c_j \in \mfrak{m}_i$ for all $1\leq i\leq s$, $c_j \in \ann_R M$. Since $d\notin \mfrak{m}_i$ for $1\leq i\leq s$, if $m\in M$ and $md=0$ then $m=0$.

Now suppose that there exists an $R$-module $M$ such that $|M|=p^n$, $c_j \in \ann_RM$ for all $1\leq j\leq l$ and the only element of $M$ annihilated by $d$ is $0$. By Lemma \ref{nolab}, we can assume that $M$ is of the form $\prod_{i=1}^sR/\mfrak{m}_i^{\lambda_i}$ for some suitable maximal ideals $\mfrak{m}_1, \ldots, \mfrak{m}_s$ and positive integers $\lambda_1, \ldots, \lambda_s$. So, for each $j$, $c_j \in \ann_RM$ implies that $c_j \in \mfrak{m}_i$ for $1\leq i\leq s$. We may assume that if $\mfrak{m}_i^2=\mfrak{m}_i$ then $\lambda_i=1$. If $\lambda_i=1$ then $d\notin \mfrak{m}_i$ since $d\in \mfrak{m}_i$ implies $(1+\mfrak{m}_i)d=0$. Suppose $\lambda_i>1$ . Take $m\in \mfrak{m}_i^{\lambda_i-1}\backslash \mfrak{m}_i^{\lambda_i}$. Then $d\in\mfrak{m}_i$ implies $(m+\mfrak{m}_i^{\lambda_i})d=0$. So $d\notin \mfrak{m}_i$. Thus $d\notin\mfrak{m}_i$ for $1\leq i\leq s$.

Now $\vert R/\mfrak{m}_i^{\lambda_i}\vert =|R/\mfrak{m}_i|^{\lambda_i}$. So $p^n=|M|=\prod_{i=1}^s |R/\mfrak{m}_i|^{\lambda_i}$. Hence $|R/\mfrak{m}_i|=p^{k_i}$ for some $k_i\in\N$. It follows $\vert M\vert=p^{\sum_{i=1}^s\lambda_ik_i}$. Thus $n=\sum_{i=1}^s\lambda_ik_i$. Hence $(p,n,
(c_1, \ldots, c_l), d) \in \text{PP}_l (R)$.
\end{proof}

\section{Reducing to divisibility and torsion conditions}\label{order}

When $R$ is an effectively given ring, in order for the theory of $R$-modules to be decidable it is enough that there is an algorithm which, given a sentence of the following form
\[\bigwedge_{i=1}^t \vert\phi_{1,i} / \psi_{1,i} \vert= H_i \wedge \bigwedge_{j=1}^u \vert \phi_{2,j} / \psi_{2,j} \vert = 1 \wedge \bigwedge_{k=1}^s \vert \phi_{3,k} / \psi_{3,k} \vert \geq E_k \tag{$\star$}\]
(where, for every $i = 1, \ldots, t$, $j = 1, \ldots u$ and $k = 1, \ldots s$, $\phi_{1,i}$, $\psi_{1,i}$,
$\phi_{2,j}$, $\psi_{2,j}$ and $\phi_{3,k}$, $\psi_{3,k}$ are pp-1-formulae and $H_i$, $E_k$ are integers $\geq 2$)
answers whether there exists an $R$-module satisfying it. Moreover, this module can be assumed to be a finite direct sum of indecomposable pure injectives.



As also seen in Facts \ref{pp1} and \ref{ppuni}, indecomposable pure injective modules $N$ over valuation domains (and more generally Pr\"ufer domains) are pp-uniserial, and every pp-$1$-formula over a valuation domain $R$ is equivalent to a finite sum of formulae of the form $a|x \, \land xb=0$. Hence, in order to calculate the size of the Baur-Monk invariants of $N$, in particular of those occurring in $(\star)$, it seems enough to handle the problem for pp-pairs $\phi/\psi$ where $\phi,\psi$ are of the form $a|x$ and $xb = 0$ with $a, b \in R$. Since every indecomposable pure injective module over a Pr\"ufer domain $R$ is the restriction of an indecomposable pure injective module over $R_\mfrak{m}$ for some maximal ideal $\mfrak{m}$, this argument transfers to Pr\"ufer domains. This motivates the following result which this section is dedicated to proving.

\begin{theorem}\label{reduction}
Let $R$ be an effectively given Pr\"ufer domain. Suppose that there is an algorithm which, given a sentence
\[\bigwedge_{i=1}^t \vert\phi_{1,i} / \psi_{1,i} \vert = H_i \wedge \bigwedge_{j=1}^u \vert \phi_{2,j} / \psi_{2,j} \vert = 1 \wedge \bigwedge_{k=1}^s \vert \phi_{3,k} / \psi_{3,k} \vert \geq E_k \]
where for $i = 1, \ldots, t$, $j = 1, \ldots, u$ and $k = 1, \ldots, s$, $H_i$, $E_k$ are integers $\geq 2$
and the pp-pairs $\phi_{1,i} \, / \, \psi_{1,i}$, $\phi_{2,j} \, / \, \psi_{2,j}$, $\phi_{3,k} \, / \, \psi_{3,k}$ are of the form $xb = 0 \, / \, c |x$ or $x = x \,  / \, xd = 0$ with $b, c, d \in R$, answers whether there exists an $R$-module satisfying this sentence. Then the theory $T_R$ of $R$-modules is decidable.
\end{theorem}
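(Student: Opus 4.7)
The plan is to effectively reduce an arbitrary $L_R$-sentence to a finite disjunction of sentences in the restricted form of the hypothesis, and then apply the given algorithm branchwise. By the Baur-Monk theorem and the standard reductions recalled in Section \ref{prel}, it suffices to decide, for a conjunction $(\star)$ of invariants and negated invariants sentences, whether some finite direct sum of indecomposable pure injective $R$-modules satisfies it.

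First I would invoke Fact \ref{pp1} to rewrite each pp-1-formula occurring in $(\star)$: the numerators as finite sums of simple formulas $\exists y(ya = x \wedge yb = 0)$ and the denominators as finite conjunctions of simple formulas $c \mid xd$. Since indecomposable pure injectives over a Pr\"ufer domain are pp-uniserial by Fact \ref{ppuni}, on each indecomposable summand of a candidate module the sum collapses to its largest term and the conjunction to its smallest. I would then branch over all finitely many ways of designating, for each pp-pair and each pattern of extremal-index assignment across the indecomposable summands, a dominant simple numerator and a dominated simple denominator. The branching conditions are inclusions between simple pp-formulas and are themselves expressible as invariants sentences of the form $|\xi / \xi'| = 1$ where $\xi, \xi'$ are simple.

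The main step, and the main obstacle, is expressing an invariants sentence whose pp-pair is $\exists y(ya = x \wedge yb = 0) \, / \, c \mid xd$ as an effective Boolean combination of invariants sentences whose pp-pairs have the admissible forms $xb' = 0 \, / \, c' \mid x$ and $x = x \, / \, xd' = 0$. Here I would reason locally: every indecomposable pure injective $N$ over $R$ is the restriction of one over some localization $R_{\mfrak{m}}$, which is a valuation domain, and in $R_{\mfrak{m}}$ the elements $a$ and $b$ are totally ordered by divisibility. Using Tuganbaev's Fact \ref{Tuganbaev} to extract divisibility witnesses uniformly in $R$, the simple numerator $\exists y(ya = x \wedge yb = 0)$ can be rewritten on $N$ either as a torsion subgroup $\ann_N b'$ or as a divisibility subgroup $a' N$. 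A symmetric analysis of $c \mid xd$ handles the denominator, and pp-uniseriality then lets the resulting index factor as a product of admissible invariants.

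Collecting all branches and applying the hypothesized decision algorithm to each, the original sentence $(\star)$ is satisfiable iff at least one branch is, so this yields a decision procedure for $T_R$. The intricate bookkeeping of the last reduction, especially the effective management of divisibility case splits that are local at each maximal ideal, is the technical heart of the proof.
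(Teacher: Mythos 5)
Your proposal identifies the correct strategy and the correct ingredients (Fact \ref{pp1}, pp-uniseriality, branching over total orderings, Tuganbaev's fact, locality at $R_\mfrak{m}$), and it matches the paper's Steps 1 and 2 in spirit. But there is a genuine gap in the step you yourself flag as ``the main obstacle.'' You claim that after applying Tuganbaev's fact, the numerator $\exists y(ya=x \wedge yb=0)$ ``can be rewritten on $N$ either as a torsion subgroup $\ann_N b'$ or as a divisibility subgroup $a'N$.'' That is not what happens: the reduction (Lemma \ref{apTuganbaev} in the paper) sends $\exists y(ya=x \wedge yb=0)$ to $x=0$ in one case and to the \emph{conjunction} $a|x \wedge xs=0$ in the other, and sends $c|xd$ to $x=x$ or to the \emph{sum} $r|x + xd=0$. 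So after one pass you are left with pp-pairs like $a|x \wedge xs=0 \,/\, r|x + xd=0$, which are not yet of the target form.

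Eliminating these requires more than pp-uniseriality. You need another round of the ordering branch (the paper's Step 3) to collapse the conjunction and the sum to single atomic formulas $a|x$ or $xb=0$, and then, crucially, you need the index-preserving identities of Lemma \ref{simppairs} (proved via explicit abelian group homomorphisms such as $m \mapsto ma + Mc$, $m \mapsto md$) to convert $a|x / c|x$, $a|x / xd=0$, $xb=0 / xd=0$, and $\rho|x / x=0$ into the admissible shapes $x=x / xd'=0$ and $xb'=0 / c'|x$. One of these conversions reintroduces a formula of the form $\exists y(x=yd \wedge yb=0)$, which forces yet another pass through the Tuganbaev reduction and a final branch (Steps 4--5). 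Your phrase ``pp-uniseriality then lets the resulting index factor as a product of admissible invariants'' attributes all of this to pp-uniseriality alone and so does not constitute a correct argument; without the change-of-variable lemma the reduction does not close, and without the extra branching rounds the pairs never reach the required form.
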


Before starting the proof, we need some preparatory work.

Let $\Sigma$ be a finite non-empty set of pp-1-formulae. Note that, for every  $R$-module $M$, logical implication (with respect to the theory of $M$) determines a quasi-order on $\Sigma$, which becomes a partial order in the quotient set of $\Sigma$ with respect to the logical equivalence relation (again with respect to the theory of $M$). Both the original quasi-order and the quotient order are
total if $M$ is an indecomposable pure injective module and $R$ is Pr\"ufer (by Fact \ref{ppuni}). With this is mind, let us consider all the possible total quasi-orderings on $\Sigma$ and the corresponding total orderings. To avoid excessively heavy notation, we will identify each total quasi-order on $\Sigma$ with the corresponding total order, and we will denote by $\Gamma(\Sigma)$ the set of these (quasi-)orders. For $L \in \Gamma (\Sigma)$ (with its relation $\leq_L$) and for $\phi,\psi \in \Sigma$, we write
\begin{itemize}
\item $\phi=_L\psi$ to mean that according to $L$, $\phi$ and $\psi$ are equal, that is,
$\phi \geq_L \phi$ and $\psi \geq_L \phi$,
\item $\phi>_L\psi$ to mean that according to $L$, $\phi$ is strictly greater than $\psi$ (so
$\phi \geq_L \psi$ holds, but $\psi \geq_L \phi$ does not).
\end{itemize}
As an example, if $\Sigma:=\{\phi,\psi\}$ then there are $3$ total quasi-orderings, and indeed
3 different related total orders  on $\Sigma$ i.e. those with $\phi=_L\psi$, $\phi>_L\psi$ and
$\psi>_L\phi$ respectively.

For each $L\in\Gamma(\Sigma)$, write $\Delta(L)$ for the following sentence in the language $L_R$:
\[\bigwedge_{\phi=_L\psi} (\vert \phi/\psi\vert=1\wedge \vert\psi/\phi\vert=1) \; \wedge \bigwedge_{\phi>_L\psi} (\vert \phi/\psi\vert>1\wedge \vert \psi/\phi\vert=1).\]
Note that an $R$-module $M$ satisfies $\Delta(L)$ if and only if the ordering of the pp-formulae in $\Sigma$ given by $L$ is the same as the inclusion ordering of the sets they define in $M$.

Recall that, \ref{Tuganbaev}, when $R$ is a Pr\"ufer domain, for all $a,b\in R$ there exist $\alpha,r,s\in R$ such that $b\alpha=as$ and $a(\alpha-1)=br$ (for technical reasons, see the next Lemma, we swap here $a$ with $b$ and $r$ with $s$). Moreover, if $R$ is effectively given, then, given $a,b\in R$, we can effectively find such $\alpha, r, s\in R$.

\begin{lemma}\label{apTuganbaev}
Let $a,b\in R$, and let $\alpha, r, s \in R$ satisfy $a\alpha=br$ and $b(\alpha-1)=as$. For all $R$-modules $M$,
\begin{enumerate}[(i)]
\item if $\vert x\alpha=0 \, / \, x=0 \, (M) \vert = 1$ then $\exists y \ (ya=x\wedge yb=0)$ is equivalent to $x=0$ in $M$,
\item if $ \vert x(\alpha-1)=0 \, / \, x=0 \, (M) \vert=1$ then $\exists y \ (ya=x\wedge yb=0)$ is equivalent to $a|x\wedge xs=0$ in $M$,
\item if $\vert x=x \, / \, \alpha |x \, (M) \vert=1$ then $a|xb$ is equivalent to $r|x+xb=0$ in $M$ and
\item if $\vert x=x \, / \, (\alpha-1) |x \, (M) \vert=1$ then $a|xb$ is equivalent to $x=x$ in $M$.
\end{enumerate}
\end{lemma}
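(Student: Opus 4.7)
\medskip

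\noindent\textbf{Proof plan.}
The four parts share a common strategy: push the two Tuganbaev identities $a\alpha = br$ and $b(\alpha-1) = as$ through the pp-formulae, and absorb the ``error terms" via the given hypothesis. In every case, one direction is a direct algebraic calculation while the other invokes the hypothesis to either kill an unwanted term or to invert $\alpha$ (resp.\ $\alpha-1$) on an element of $M$. I would treat the four items in order.

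For (i), one direction is trivial (take $y=0$). For the converse, if $ya=x$ and $yb=0$, then
$x\alpha = ya\alpha = y(br) = (yb)r = 0$,
so the hypothesis $x\alpha = 0\to x=0$ in $M$ forces $x=0$. For (ii), the forward direction is: from $x=ya$, $yb=0$ we obtain $a\mid x$ and $xs = yas = y\cdot b(\alpha-1) = 0$. For the backward direction, given $x=ya$ with $xs=0$, the element $yb$ satisfies
$(yb)(\alpha-1) = y\bigl(b\alpha - b\bigr) = y(as) = xs = 0$,
and the hypothesis $x(\alpha-1)=0\to x=0$ forces $yb=0$, as required.

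Part (iii) is the one that really uses the surjectivity of $\alpha$ on $M$. One direction is routine: if $x=u+v$ with $u=u'r$ and $vb=0$, then $xb = u'rb = u'(a\alpha) = (u'\alpha)a$, so $a\mid xb$. The converse is the nontrivial direction, and I would argue as follows. Suppose $xb=za$. By the hypothesis, $\alpha$ acts surjectively on $M$, so choose $z_1\in M$ with $z_1\alpha = z$. Set $u = z_1 r$ and $v=x-u$. Then $r\mid u$ by construction, and
$ub = z_1(rb) = z_1(a\alpha) = (z_1\alpha)a = za = xb$,
so $vb=0$. Hence $x=u+v$ witnesses $r\mid x + xb=0$. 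Finally, for (iv), surjectivity of $\alpha-1$ on $M$ lets me write any $x\in M$ as $x=y(\alpha-1)$; then $xb = yb(\alpha-1) = y(as) = (ys)a$, so $a\mid xb$ for every $x$, giving the claimed equivalence $a\mid xb \equiv x=x$.

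The only step requiring any real thought is the nontrivial direction of (iii); the difficulty is not so much finding the decomposition as recognising that the correct choice of $u$ is $z_1 r$, where $z_1$ is any preimage of $z$ under $\alpha$, and that the identity $a\alpha = br$ then produces $ub = xb$ on the nose. Everything else is bookkeeping with the two Tuganbaev identities.
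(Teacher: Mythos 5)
Your proof is correct and follows essentially the same route as the paper's: the same two Tuganbaev identities are pushed through the formulae, the same preimage under $\alpha$ (resp.\ $\alpha-1$) is chosen in parts (iii) and (iv), and the decomposition $x = z_1 r + (x - z_1 r)$ in (iii) is exactly the paper's $m = m''r + (m - m''r)$. No gaps; nothing further to add.
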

\begin{proof}
(i) Suppose that $M$ satisfies $\vert x\alpha=0 \, / \, x=0\vert=1$. Let $m,m'\in M$ be such that $m'a=m$ and $m'b=0$. Then $0=m'br=m'a\alpha=m\alpha$. So $m=0$.

(ii) Suppose that $M$ satisfies $\vert x(\alpha-1)=0 \, / \, x=0\vert=1$. Let $m,m'\in M$ be such that $m'a=m$ and $m'b=0$. Then $a|m$ and $ms=m'as=m'b(\alpha-1)=0$.

Let $m,m'\in M$ be such that $m=m'a$ and $ms=0$. Then $m'as=ms=0$. So $m'b(\alpha-1)=0$. Since $M$ satisfies $\vert x(\alpha-1)=0 \, / \, x=0 \vert=1$, $m'b=0$. So $m$ satisfies $\exists y \ (ya=x\wedge yb=0)$.

(iii) Suppose that $M$ satisfies $\vert x=x \, / \, \alpha |x\vert=1$. Let $m,m'\in M$ be such that $mb=m'a$. Since $M$ satisfies $\vert x=x \, / \, \alpha |x\vert=1$, there exists $m''\in M$ such that $m'=m''\alpha$. So $mb=m''\alpha a=m''br$. So $(m-m''r)b=0$ and hence $m$ satisfies $r|x+xb=0$.

Let $m,m',m''\in M$ be such that $m''b=0$ and $m=m'r+m''$. Then $mb=m'rb=m'a\alpha$. So $m$ satisfies $a|xb$.

(iv) Suppose $M$ satisfies $\vert x=x \, / \, (\alpha-1) |x\vert=1$. Let $m\in M$. Then there exists $m'\in M$ such that $m'(\alpha-1)=m$. So $m'as=m'(\alpha-1)b=mb$. Therefore $a|mb$.
\end{proof}

The next lemma will also be useful later.

\begin{lemma}\label{simppairs} Let $M$ be an $R$-module, $a,b,c,d\in R$. Then
\begin{enumerate}[(i)]
\item $\vert a|x \, / \, c|x \, (M) \vert =  \vert x = x \, / \, c | xa \, (M) \vert$,
\item $\vert a|x \, / \, xd=0 \, (M) \vert = \vert x = x \, / \, x a d = 0 \, (M) \vert$,
\item $\vert xb=0 \, / \, xd=0 \, (M) \vert = \vert \, \exists y \ (x=yd\wedge yb=0) \, / \, x=0 \, (M) \vert$,
\item $\vert x=x \, / \, xd = 0 \, (M) \vert = \vert d|x \, / \, x = 0 \, (M)\vert$.
\end{enumerate}
\end{lemma}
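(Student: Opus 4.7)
The plan is to verify each of the four identities by the first isomorphism theorem for abelian groups: in each case I exhibit a surjective additive map whose kernel is precisely the denominator subgroup appearing on one of the two sides. Since pp-definable subgroups are ordinary subgroups of $M$ and the convention from $\S$\ref{prel} is $|\phi/\psi(M)|=|\phi(M):\phi(M)\cap\psi(M)|$, this suffices to equate the two indices. The main care needed is keeping track of which pp-formula defines which subgroup of $M$.

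For (i), the solution set of $a|x$ in $M$ is $aM$ and that of $c|x$ is $cM$, so the left-hand side equals $|aM:aM\cap cM|$. Consider $f\colon M\to aM/(aM\cap cM)$ defined by $f(m)=ma+(aM\cap cM)$. It is surjective, and $f(m)=0$ precisely when $ma\in cM$, i.e.\ when $c\mid ma$; thus $|M:\ker f|$ is the right-hand side. The same map, with target $aM/(aM\cap T_d)$ where $T_d=\{m\in M:md=0\}$, proves (ii): its kernel is now $\{m\in M:mad=0\}$, so $|M:\ker f|$ matches the right-hand side of (ii).

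For (iii), restrict multiplication by $d$ to $T_b=\{m\in M:mb=0\}$: the map $T_b\to M$, $m'\mapsto m'd$, has image $\{m'd:m'\in T_b\}$, which is exactly the solution set of $\exists y(x=yd\wedge yb=0)$ in $M$, and kernel $T_b\cap T_d$. Hence $|T_b:T_b\cap T_d|$ equals the cardinality of this image, giving (iii). For (iv), multiplication by $d$ on $M$ is a surjection $M\to dM$ with kernel $T_d$. All four checks are one-line applications of the first isomorphism theorem following the same multiplication-by-a-ring-element pattern, so I do not expect any genuine obstacle; only the bookkeeping between the syntactic pp-formula and the subgroup it cuts out in $M$ needs attention.
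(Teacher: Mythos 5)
Your proof is correct and takes essentially the same route as the paper: the paper likewise exhibits, for each identity, the multiplication-by-$a$ or multiplication-by-$d$ homomorphism (from $M$, or restricted to $\ann_M(b)$), identifies its kernel with the denominator subgroup, and invokes the first isomorphism theorem. The only difference is cosmetic — you spell out the target as $aM/(aM\cap cM)$ etc.\ where the paper abbreviates.
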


\begin{proof}
(i) and (ii) follow from considering the abelian group homomorphism from $M$ to $Ma / Mc$ (respectively to $Ma / \ann_M(d)$) which sends any $m \in M$ to the coset of $ma$ ($\ann_M (d)$ denotes here the annihilator of $d$ in $M$, that is, the pp-subgroup of the realizations in $M$ of $xd = 0$).

(iv) uses the scalar multiplication by $d$ in $M$.

For (iii) consider the abelian group homomorphism from $\ann_M (b)$ to the pp-subgroup of $M$ defined by $\exists y \ (x=yd \wedge yb=0)$ which sends any $m$ to $md$. This homomorphism is clearly surjective and $m \in \ann_M (b)$ is in its kernel if and only $md=0$.
\end{proof}

Now let $X,Y$ be non-empty finite subsets of $R$. Let $\Omega(X)$ (respectively $\Omega(Y)$) be the set of functions $P:X\rightarrow \{1,-1\}$ (respectively $Q:Y\rightarrow \{1,-1\}$). For each $(P,Q)\in \Omega(X)\times \Omega(Y)$, write $\Theta(P,Q)$ for the following sentence in the language $L_R$ (with $\alpha$ ranging over $X$ and $\beta$ over $Y$):
\begin{multline*}
\bigwedge_{P(\alpha)=1}\vert x\alpha=0 \, / \, x=0\vert=1 \wedge \bigwedge_{P(\alpha)=-1}\vert x(\alpha-1)=0 \, / \, x=0\vert=1 \wedge\\\bigwedge_{Q(\beta)=1}\vert x=x \, / \, \beta|x\vert=1\wedge\bigwedge_{Q(\beta)=-1}\vert x=x \, / \, (\beta-1)|x\vert=1.
\end{multline*}

Note that $\vert x\alpha=0 \, / \, x=0\vert=1$ is satisfied by an $R$-module $N$ if and only if $\alpha\notin \Ass \, N$ and $\vert x=x \, / \, \beta|x\vert=1$ is satisfied by an $R$-module $N$ if and only if $\beta\notin \Div \, N$.

Let us also point out that the only pairs of pp-formulae occurring in $\Theta (P, Q)$ are of the form required by Theorem \ref{reduction}.

\begin{lemma}\label{sigforPI}
Let $X,Y$ be non-empty finite subsets of $R$. If $N$ is an indecomposable pure injective $R$-module then there exists $(P,Q)\in \Omega(X)\times \Omega(Y)$ such that $N$ satisfies $\Theta(P,Q)$.
\end{lemma}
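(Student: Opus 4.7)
The proof plan is short and relies on the interpretation of the two kinds of conjuncts in $\Theta(P,Q)$ given immediately after its definition.

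First, I would recall from the remark following the definition of $\Theta(P,Q)$ that, for an indecomposable pure injective $R$-module $N$,
$$|x\alpha = 0 \, / \, x = 0 \, (N)| = 1 \iff \alpha \notin \Ass N,$$
$$|x = x \, / \, \beta \mid x \, (N)| = 1 \iff \beta \notin \Div N.$$
Thus, constructing a pair $(P,Q) \in \Omega(X)\times\Omega(Y)$ making $N \models \Theta(P,Q)$ is the same as, for every $\alpha \in X$, choosing one of $\alpha, \alpha - 1$ which lies outside $\Ass N$, and for every $\beta \in Y$, choosing one of $\beta, \beta - 1$ which lies outside $\Div N$.

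Next I would invoke the fact, cited from \cite[Lemma 2.7]{GLPT} in the preliminaries, that $\Ass N$ and $\Div N$ are proper prime ideals of $R$ (this is where the hypothesis that $R$ is Pr\"ufer and $N$ is indecomposable pure injective is used). In particular $1 \notin \Ass N$ and $1 \notin \Div N$.

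Finally the key observation: for any $\alpha \in R$, one cannot have both $\alpha \in \Ass N$ and $\alpha - 1 \in \Ass N$, since otherwise $1 = \alpha - (\alpha - 1) \in \Ass N$, contradicting properness. So I set
$$P(\alpha) = \begin{cases} 1 & \text{if } \alpha \notin \Ass N, \\ -1 & \text{if } \alpha \in \Ass N \text{ (hence } \alpha - 1 \notin \Ass N\text{)}, \end{cases}$$
and analogously
$$Q(\beta) = \begin{cases} 1 & \text{if } \beta \notin \Div N, \\ -1 & \text{if } \beta \in \Div N \text{ (hence } \beta - 1 \notin \Div N\text{)}. \end{cases}$$
By construction $N$ satisfies every conjunct of $\Theta(P,Q)$. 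There is no real obstacle; the content is entirely in the properness of the prime ideals $\Ass N$ and $\Div N$, which has already been recorded.
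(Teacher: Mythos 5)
Your proof is correct and follows essentially the same route as the paper's: both rely on the properness of the ideals $\Ass N$ and $\Div N$ (recorded in the preliminaries via \cite[Lemma 2.7]{GLPT}) to conclude that for each $\alpha$ at least one of $\alpha$, $\alpha-1$ lies outside, and then define $P$, $Q$ exactly as you do. You simply spell out the one-line reason (that $1 = \alpha - (\alpha-1)$ would otherwise land in the ideal) which the paper leaves implicit.
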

\begin{proof}
For $N$ an indecomposable pure injective $R$-module, $\Ass \, N$ and $\Div \, N$ are proper ideals. Thus for every $\alpha\in X$ (respectively $\beta\in Y$), either $\alpha\notin \Ass \, N$ (respectively $\beta\notin \Div \, N$) or $\alpha-1\notin \Ass \, N$ (respectively $\beta-1\notin \Div \, N$). Let $P:X\rightarrow \{1,-1\}$ (respectively $Q:Y\rightarrow \{1,-1\}$) be such that $P(\alpha)=1$ (respectively $Q(\beta)=1$) if $\alpha\notin \Ass \, N$ (respectively $\beta\notin \Div \, N$) and $P(\alpha)=-1$ (respectively $Q(\beta)=-1$) otherwise. Then $N$ satisfies $\Theta(P,Q)$.
\end{proof}

Now we are able to prove Theorem \ref{reduction}.


\begin{proof}
\noindent
\textbf{Step 1:} Let $\chi$ be the sentence labeled $(\star)$ at the beginning of this section and let $\Sigma$ be any finite non-empty set of pp-formulae.

 Note that there is an $R$-module satisfying $\chi$ if and only if there exists a non-empty subset $T$ of $\Gamma(\Sigma)$ and for each $L\in T$, an $R$-module $M_L$ satisfying $\Delta(L)$ such that $\bigoplus_{L\in T}M_L$ satisfies $\chi$.

The reverse direction is clear. Conversely, if there exists an $R$-module satisfying $\chi$, then there exists a finite direct sum of indecomposable pure injective (hence pp-uniserial by Fact \ref{ppuni}) modules satisfying $\chi$. Take $T$ to be the set of total orderings of $\Sigma$ determined by the inclusion of pp-subgroups in these direct summands.

Suppose that $T\subseteq\Gamma(\Sigma)$ is non-empty. There exist $R$-modules $M_L$ ($L \in T$) satisfying $\Delta(L)$ such that $\bigoplus_{L\in T}M_L$ satisfies $\chi$ if and only if the following conditions hold:
\begin{enumerate}
\item for each $1\leq i \leq t$, $\prod_{L\in T}\vert \phi_{1,i} (M_L) / \psi_{1,i} (M_L)\vert =H_i$,
\item for each $1\leq j \leq u$ and $L\in T$, $\vert \phi_{2,j} (M_L) / \psi_{2,j} (M_L)\vert =1$,
\item for each $1\leq k \leq s$, $\prod_{L\in T}\vert \phi_{3,k} (M_L) / \psi_{3,k}(M_L)\vert \geq E_k$.
\end{enumerate}
For each $1\leq i\leq t$, let $F^T_i$ be the set of functions $f:T\rightarrow \N$ such that $\prod_{L\in T}f(L)= H_i$. For each $1\leq k\leq s$, let $G^T_k$ be the set of functions $g:T\rightarrow \N$ such that $\prod_{L\in T}g(L)\geq E_k$ and for each $L\in T$, $g(L)\leq E_k$.

For each pair of tuples $f:=(f_1, \ldots, f_t)$ and $g:=(g_1, \ldots, g_s)$ with $f_i \in F^T_i$ and $g_k\in G^T_k$ and each $L\in T$, let $\chi^{L}_{(f,g)}$ be the sentence
\[\Delta(L)\wedge\bigwedge_{i=1}^t \vert \phi_{1,i} / \psi_{1,i} \vert = f_i(L) \,
\wedge \bigwedge_{j=1}^u \vert \phi_{2,j} / \psi_{2,j} \vert = 1 \; \wedge
\] \[ \wedge
\bigwedge_{k=1}^s \vert \phi_{3,k} / \psi_{3,k} \vert\geq g_k(L).\]

Now, there exists an $R$-module $M$ satisfying $\chi$ if and only if the following exist
\begin{enumerate}
\item $T\subseteq \Gamma(\Sigma)$ non-empty,
\item a pair of tuples $f:=(f_1,\ldots,f_l)$ and $g:=(g_1,\ldots,g_s)$ with $f_i\in F^T_i$ and $g_k\in G^T_k$,
\item for each $L\in T$, an $R$-module $M_L$ satisfying $\chi^{L}_{(f,g)}$.
\end{enumerate}

Since $R$ is a Pr\"ufer domain, we may assume that each $\phi_{S,i}$ (with $S=1, 2, 3$ and $i$ ranging
over the corresponding indices) is of the form
$\sum_{v=1}^{A_{S,i}} \, \exists y \ ( ya^S_{iv}=x\wedge y b^S_{iv}= 0)$
and each $\psi_{S,i}$ is of the form $\bigwedge_{g=1}^{B_{S,i}} (c^S_{iw} | xd^S_{iw})$,
where the involved scalars are elements of $R$.

Let $\Sigma$ be the set of formulae $\exists y \ (ya^S_{iv}=x\wedge y b^S_{iv}= 0)$ and $c^S_{iw} | xd^S_{iw}$  where $S \in \{1,2,3\}$, $1\leq i\leq t$ if $S=1$, $1\leq i\leq u$ if $S=2$, $1\leq i\leq s$ if $S=3$ and $1\leq v\leq A_{S, i}$, $1 \leq w \leq B_{S,i}$.

Let $T\subseteq \Gamma(\Sigma)$ and $L\in T$. Let $f:=(f_1,\ldots,f_l)$ and $g:=(g_1,\ldots,g_s)$ be a pair of tuples with $f_i\in F^T_i$ and $g_k\in G^T_k$. For each $\phi_{S,i}$ there exists $\sigma_{S,i} \in \Sigma$ such that $\Delta(L)\vdash \phi_{S,i} \leftrightarrow \sigma_{S,i}$ and for each $\psi_{S,i}$ there exists $\tau_{S,i} \in \Sigma$ such that $\Delta(L)\vdash \psi_i^s\leftrightarrow \tau_i^s$, moreover
each $\sigma_{S,i}$, $\tau_{S,i}$ can be effectively obtained from the corresponding
$\phi_{S,i}$, $\psi_{S,i}$. Thus $\chi^{L}_{(f,g)}$ is equivalent to

\[\Delta(L) \wedge \bigwedge_{i=1}^l \vert \sigma_{1,i} / \tau_{1,i} \vert=f_i(L) \wedge \bigwedge_{j=1}^u \vert \sigma_{2,j} / \tau_{2,j} \vert = 1 \wedge \bigwedge_{k=1}^s \vert \sigma_{3,k} / \tau_{3,k} \vert \geq g_k(L).\]

Thus, in order to show that the theory of $R$-modules is decidable, it is enough that there is an algorithm which given a sentence as in $(\star)$ with each $\phi_{S,i}$  of the form $\exists y \ (ya^S_{i}=x\wedge y b^S_{i}= 0)$ and each $\psi_{S,i}$ of the form $c^S_i|xd^S_i$, answers whether there exists an $R$-module satisfying it.

\medskip

\noindent
\textbf{Step 2: }Let $\chi$ be the sentence labeled $(\star)$, as reduced at the end of Step 1. Let $X,Y$ be non-empty finite subsets of $R$. Note that there is an $R$-module satisfying $\chi$ if and only if there exists a non-empty subset $T$ of $\Omega(X)\times \Omega(Y)$ and for each $(P,Q)\in T$, there exists an $R$-module $M_{(P,Q)}$ satisfying $\Theta(P,Q)$ such that $\bigoplus_{(P,Q)\in T}M_{(P,Q)}$ satisfies $\chi$.

This follows from Lemma \ref{sigforPI} since if there exists an $R$-module satisfying $\chi$ then there exists a finite direct sum of indecomposable pure injective $R$-modules satisfying $\chi$ and if two modules satisfy $\Theta(P,Q)$ then so does their direct sum.

Let $F_i^T$ and $G_k^T$ be as in Step 1, but adapted to the new setting where the (quasi-)orders $L$ of some subset of $\Gamma(\Sigma)$ are replaced by a subset of pairs $(P,Q)$ in $\Omega(X)\times \Omega(Y)$.
For each $f_i\in F_i^T$ and $g_k\in G_k^T$, let $\chi^{(P,Q)}_{(f,g)}$ be the sentence
\[\Theta(P,Q)\wedge\bigwedge_{i=1}^t \vert \phi_{1,i} / \psi_{1,i} \vert = f_i(P,Q) \,
\wedge \]
\[ \bigwedge_{j=1}^u \vert \phi_{2,j} / \psi_{2,j} \vert = 1 \; \wedge
\bigwedge_{k=1}^s \vert \phi_{3,k} / \psi_{3,k} \vert\geq g_k(P,Q).\]

Now, there exists an $R$-module $M$ satisfying $\chi$ if and only if the following exist:
\begin{enumerate}
\item $T\subseteq \Omega(X)\times \Omega(Y)$ non-empty,
\item a pair of tuples $f:=(f_1,\ldots,f_l)$ and $g:=(g_1,\ldots,g_s)$ with $f_i\in F^T_i$ and $g_k\in G^T_k$,
\item for each $(P,Q) \in T$, an $R$-module $M_L$ satisfying $\chi^{(P,Q)}_{(f,g)}$.
\end{enumerate}

Using Step 1, we may assume that each $\phi_{S,i}$ (with $S=1, 2, 3$ and $i$ ranging
over the corresponding indices) is of the form $\exists y \ (ya^S_{i}=x\wedge x b^S_{i}= 0)$ and each $\psi_{S,i}$ is of the form $c^S_i|xd^S_i$. For each $a^S_i, b^S_i$, let $\alpha^S_i,\delta^S_i,\gamma^S_i$ be such that $a^S_i\alpha^S_i=b^S_i\delta^S_i$ and $b(\alpha^S_i-1)=a_i^S\gamma_i^S$. For each $c^S_i, d^s_i$, let $\beta_i^S, \lambda_i^S, \mu_i^S$ be such that $c_i^S\beta_i^S=d_i^S\lambda_i^S $ and $d_i^S(\beta_i^S-1)=d_i^S\mu_i^S $. By Fact \ref{Tuganbaev}, such $\alpha^S_i,\delta^S_i,\gamma^S_i$ and $\beta_i^S, \lambda_i^S, \mu_i^S$ exist and if $R$ is effectively given then we can find them by searching.  Let $X$, $Y$ be the sets of the $\alpha_i^S$ and the $\beta_i^S$, respectively, where $S=1,2,3$ and $i$ ranges over the corresponding indices.

Let $T\subseteq \Omega(X)\times \Omega(Y)$ and $(P,Q)\in T$. Let $f:=(f_1,\ldots,f_l)$ and $g:=(g_1,\ldots,g_s)$ be a pair of tuples with $f_i\in F^T_i$ and $g_k\in G^T_k$. By Lemma \ref{apTuganbaev}, for each $\phi_{S,i}$, there exists a formula $\sigma_{S,i}$ of the form $a|x\wedge xs=0$ such that $\Theta(P,Q)\vdash \phi_{S,i}\leftrightarrow \sigma_{S,i}$ and for each $\psi_{S,i}$, there exists a formula $\tau_{S,i}$ of the form $r|x+xd=0$ such that $\Theta(P,Q)\vdash \psi_{S,i}\leftrightarrow \tau_{S,i}$ (and
there are algorithms producing these formulae). Thus $\chi^{(P,Q)}_{(f,g)}$ is equivalent to

\[\Theta(P,Q) \wedge \bigwedge_{i=1}^l \vert \sigma_{1,i} / \tau_{1,i} \vert=f_i(L) \wedge \]
\[  \bigwedge_{j=1}^u \vert \sigma_{2,j} / \tau_{2,j} \vert = 1 \wedge \bigwedge_{k=1}^s \vert \sigma_{3,k} / \tau_{3,k} \vert \geq g_k(L).\]

Thus, in order to show that the theory of $R$-modules is decidable, it is enough that there is an algorithm which given a sentence as in $(\star)$ with each $\phi_{S,i}$ of the form $a|x \, \wedge \, xs=0$ and each $\psi_{S,i}$ of the form $r|x \, + \, xd=0$, answers whether there exists an $R$-module satisfying it.

\medskip

\noindent
\textbf{Step 3: }Let $\chi$ be as in $(\star)$ with $\phi_{S,i}$ equal to $a_i^S|x \, \wedge \, xs_i^S=0$ and $\psi_{S,i}$ equal to $r_i^S|x \, + \, xd_i^S=0$ with $a_i^S,s_i^S,r_i^S,d_i^S\in R$.

Proceeding as in Step 1 with $\Sigma$ equal to the set of formulae $a_i^S|x$, $xs_i^S=0$, $r_i^S|x$ and $xd_i^S=0$ one can show that the theory of $R$-module is decidable if and only if there is an algorithm which, given a sentence
\[\bigwedge_{i=1}^t \vert\phi_{1,i} / \psi_{1,i} \vert= H_i \wedge \bigwedge_{j=1}^u \vert \phi_{2,j} / \psi_{2,j} \vert = 1 \wedge \bigwedge_{k=1}^s \vert \phi_{3,k} / \psi_{3,k} \vert \geq E_k \]
where for $i = 1, \ldots, t$, $j = 1, \ldots, u$ and $k = 1, \ldots, s$, $H_i$, $E_k$ are integers $\geq 2$
and the pp-formulae $\phi_{1,i}, \psi_{1,i}, \phi_{2,j},\psi_{2,j}, \phi_{3,k}, \psi_{3,k}$ are of the form $a|x$ or $xb=0$ with $a, b \in R$, answers whether there exists an $R$-module satisfying this sentence.

\medskip

\noindent
\textbf{Step 4: }
Let $\chi$ be of the form we reduced to at the end of Step 3. By Lemma \ref{simppairs}, we can replace in $\chi$
\begin{enumerate}[(i)]
\item every instance of the form $| \, a|x \, / \, c|x \, |$ by $| \, x=x \, / \, c|xa \, |$, 
\item every instance of the form $|\, a|x \, / \, xd=0 \, |$ by $|\, x=x \, / \, xad=0 \, |$
\item and every instance of the form $|\, xb=0 \, / \, xd=0|$ by $|\, \exists y \ (x=yd\wedge yb=0) \, / \, x=0
\,|$.
\end{enumerate}
Repeating Step 2 and recalling that only pairs of the form $x\alpha=0 \, / \, x=0$ and $x=x \, / \, \beta|x$ occur in the sentences $\Theta(P,Q)$, we are led to consider a conjunction of invariants sentences involving only pairs of the form $x=x \, / \, \rho | x +  x \sigma =0 $, $ \rho |x \wedge x \sigma =0 \, / \, x=0$, $x=x \, / \, xd=0$ and $xb=0 \, / \, c|x$. So we can assume that $\chi$ is a conjunction of invariants sentences involving only pairs of this form.

\medskip

\noindent
\textbf{Step 5: }
Suppose that a pair of the form $x=x \, / \, \rho |x +  x \sigma=0$ or $\rho |x  \wedge xs=0 \, / \, x=0$ occurs in $\chi$ for some $\rho, \sigma \in R$.
Put $\Sigma:=\{\rho |x, x \sigma =0\}$ and take $L\in \Gamma(\Sigma)$. Then the only pairs that occur in $\Delta(L)$ are $x \sigma =0 / \rho | x$, which is already of the required final form in the statement of the theorem, and $\rho | x \, / \, xs=0$, which by Lemma \ref{simppairs} can be replaced by $x=x \, / \, x \rho \sigma =0$. Hence all pairs occurring in $\Delta(L)$ are of the required form.

Repeating Step 1 of the proof with $\Sigma=\{ \rho |x , \, x \sigma =0 \}$ produces sentences $\chi_{(f,g)}^L$ where we can replace each instance of $x=x \, / \, \rho | x + x \sigma =0 $ by $x=x \, / \, \rho|x$ or $x=x \, / \, x \sigma =0$ as appropriate and each instance of $\rho |x  \wedge x \sigma =0 \, / \, x=0$ by $\rho |x \, / \, x=0$ or $ x \sigma =0 \, / \, x=0$ as appropriate. By Lemma \ref{simppairs}, (iv), we may replace all instances of the pair $\rho | x \, / \, x=0$ by $x=x \, / \, x \rho =0$.  Repeating this process for each $\rho, \sigma \in R$ such that the pair $x=x \, / \, \rho | x + x \sigma =0$ or the pair $\rho |x \wedge x \sigma=0 \, / \, x=0$ occurs in $\chi$ allows us to reduce to considering sentences of the form required by the statement of the theorem.
\end{proof}

\section{Preparatory lemmas}\label{prep}

We assume throughout this section that $R$ is a Pr\"ufer domain such that all the localizations of $R$ at maximal ideals have dense value group.

The focus of this section will be the $R$-modules $$N_{\gamma}(\mfrak{m}):=R_{\mfrak{m}}/\gamma \mfrak{m}R_{\mfrak{m}}, \, \, N'_{\beta, \eta}(\mfrak{m}):=\mfrak{m}R_{\mfrak{m}}/ \beta \eta R_{\mfrak{m}}$$ where $\mfrak{m}$ is a maximal ideal of $R$, $\gamma\in R\backslash\{0\}$ and $\beta, \eta\in\mfrak{m}\backslash\{0\}$.

It was shown in \cite[Proposition 7.8]{PPT} that, over a valuation domain $V$ with dense value group and {\bf finite residue field}, the only indecomposable pure injective modules $N$ such that there exists a pp-pair $\phi/\psi$ with $\vert\phi/\psi(N)\vert$ finite but not equal to $1$ are those corresponding to the types $(\beta V, \eta V)$ and $(\mfrak{p},\gamma\mfrak{p})$ where $\mfrak{p}$ is the maximal ideal of $V$, $\gamma\in V\backslash\{0\}$ and $\beta, \eta \in\mfrak{p}\backslash\{0\}$. These types are realized in the uniserial $V$-modules $\mfrak{p}/ \beta \eta V$ and $V/\gamma\mfrak{p}$. Thus all such indecomposable pure injective modules are of the form $\text{PE}(\mfrak{p}/ \beta \eta V)$ or $\text{PE}(V/\gamma\mfrak{p})$
(recall that PE means pure injective hull). If the residue field of $V$ is not finite then no such indecomposable pure injective modules exist.

If $N$ is an indecomposable pure injective module over a Pr\"{u}fer domain $R$ then there exists some maximal ideal $\mfrak{m}$ such that $N$ is the restriction of an indecomposable pure injective $R_{\mfrak{m}}$-module. Now, if there exists a pp-pair $\phi/\psi$ such that $\vert\phi/\psi(N)\vert$ is
finite but not equal to $1$ then $N$ is either of the form $\text{PE}(\mfrak{m}R_\mfrak{m}/\beta \eta R_\mfrak{m})$ or of the form $\text{PE}(R_\mfrak{m}/\gamma\mfrak{m}R_\mfrak{m})$ where $R/\mfrak{m}$ is finite, $\beta,\eta\in\mfrak{m}R_{\mfrak{m}}\backslash\{0\}$ and $\gamma\in R_{\mfrak{m}}\backslash\{0\}$. Since all elements of $R_{\mfrak{m}}$ are unit multiples of elements in $R$, we may assume that $\beta,\eta\in\mfrak{m}\backslash\{0\}$ and $\gamma\in R\backslash\{0\}$.

Finally, for $R$ any commutative ring and $\mfrak{m}$ a maximal ideal of $R$, if $M$ is a module over $R_{\mfrak{m}}$, then
taking the pure injective hull of $M$ over $R_\mfrak{m}$ and then restricting to $R$ is the same as taking the pure injective hull of $M$ as
an $R$-module.

We will need the following result from \cite{PPT}. Recall that a pp-pair $\phi / \psi$ is {\sl minimal}
(in the theory of a given module $N$ over any ring) if $\phi(N)$ properly includes its intersection with $\psi(N)$ and there is no intermediate pp-subgroup $\theta(N)$ such that $\phi(N) \supsetneq \theta (N) \supsetneq \phi(N) \cap \psi(N)$.

\begin{lemma}\label{finiteimpminpair}
(\text{\cite[Lemma 7.5 and Corollary 7.6]{PPT}}).
Let $V$ be a commutative valuation domain and $\phi/\psi$ be a pp-$1$-pair over $V$. If $N$ is an indecomposable pure injective $V$-module and $\vert \, \phi/\psi(N) \, \vert$ is finite and $> 1$ then $\phi/\psi$ is an $N$-minimal pair. Moreover, if $\mfrak{p}$ is
the maximal ideal of $V$, then $\phi(N)/\psi(N)$ is a 1-dimensional
vector space over the residue field $V/\mfrak{p}$, that consequently is finite.
\end{lemma}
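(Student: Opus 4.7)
The plan is to first reduce $\phi$ and $\psi$ to single standard pp-$1$-formulas on $N$, then exploit the dense value group hypothesis (in force throughout this section) to show that $M := \phi(N)/(\phi(N)\cap\psi(N))$ is annihilated by $\mfrak{p}$, and finally to cut the dimension to one using pp-uniseriality. Minimality will then be a corollary. By Fact \ref{pp1}, every pp-$1$-formula over $V$ is equivalent to a finite sum of standard formulas $\exists y(ya = x \wedge yb = 0)$. Since $N$ is pp-uniserial (Fact \ref{ppuni}), such a sum is equivalent on $N$ to its largest summand, so I may assume $\phi$ and $\psi$ are both single standard pp-$1$-formulas. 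Then $M$ is a nonzero finite $V$-module, in particular finitely generated over the local ring $V$.

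The key observation is that density of the value group of $V$ yields $\mfrak{p}^2 = \mfrak{p}$. Indeed, given $p \in \mfrak{p} \setminus \{0\}$ with valuation $v(p) > 0$, density provides $q \in V$ with $0 < v(q) < v(p)$; then $p/q \in V$ has positive valuation, so lies in $\mfrak{p}$, and $p = q \cdot (p/q) \in \mfrak{p}^2$. Consequently $(M\mfrak{p})\mfrak{p} = M \mfrak{p}^2 = M\mfrak{p}$, and since $M\mfrak{p}$ is a finite (hence finitely generated) $V$-module and $\mfrak{p}$ is the Jacobson radical of the local ring $V$, Nakayama's lemma forces $M\mfrak{p} = 0$. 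Therefore $M$ is a $V/\mfrak{p}$-vector space; being nonzero and finite, it forces the residue field $V/\mfrak{p}$ itself to be finite and $\dim_{V/\mfrak{p}} M \geq 1$.

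To conclude $\dim_{V/\mfrak{p}} M = 1$, suppose for contradiction that $\dim M \geq 2$ and pick linearly independent $m_1, m_2 \in M$ with lifts $n_1, n_2 \in \phi(N)$. The cyclic $V$-submodules $Vn_i + (\phi(N)\cap\psi(N))$ of $\phi(N)$ project onto distinct, incomparable $1$-dimensional $V/\mfrak{p}$-subspaces of $M$. Combining the standard-form description of pp-formulae (Fact \ref{pp1}) with the distributivity of the pp-$1$-lattice over $V$ from \cite{EkHe}, one shows that each such submodule of $\phi(N)$ is actually cut out by a pp-formula, producing two incomparable pp-subgroups of $N$ in the interval $[\phi(N)\cap\psi(N), \phi(N)]$. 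This contradicts pp-uniseriality of $N$, so $\dim_{V/\mfrak{p}} M = 1$ and $M \cong V/\mfrak{p}$. Minimality of $\phi/\psi$ is now immediate: any intermediate pp-subgroup $\theta(N)$ with $\phi(N) \supsetneq \theta(N) \supsetneq \phi(N)\cap\psi(N)$ would yield a proper nonzero $V$-submodule of the simple module $M \cong V/\mfrak{p}$, which is impossible.

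The main obstacle will be this final dimension step: promoting the module-theoretic cyclic lifts $Vn_i + (\phi(N)\cap\psi(N))$ to honest pp-definable subgroups of $N$, so that pp-uniseriality can be deployed to rule out dimension at least two. The formula reduction and the Nakayama argument via $\mfrak{p}^2 = \mfrak{p}$ are essentially routine given the preliminaries, whereas this last identification requires a careful case analysis using the explicit form of standard pp-formulae and the distributivity of the pp-$1$-lattice.
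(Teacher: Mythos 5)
The paper does not prove this lemma; it quotes it from \cite[Lemma 7.5 and Corollary 7.6]{PPT}, so there is no in-house argument to compare against. One preliminary remark: as literally stated, the lemma fails without a dense value group hypothesis. Over $V = \mathbb{Z}_p$, taking $N = \mathbb{Z}/p^2$, $\phi := (x=x)$ and $\psi := (p^2\mid x)$, the invariant $|\phi/\psi(N)| = p^2$ is finite and greater than $1$, yet $p\mid x$ yields an intermediate pp-subgroup and $\phi(N)/\psi(N) \cong \mathbb{Z}/p^2$ is not even a $V/\mfrak{p}$-module. Density is the standing assumption of this section and of \cite{PPT}, and you were right to bring it into play via $\mfrak{p}^2 = \mfrak{p}$.

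Your reduction to single standard pp-formulas, the computation $\mfrak{p}^2 = \mfrak{p}$ from density, and the Nakayama step yielding $M\mfrak{p} = 0$ for the finite $V$-module $M := \phi(N)/(\phi(N)\cap\psi(N))$ are all correct, as is the observation that once $\dim_{V/\mfrak{p}} M = 1$ is known, minimality follows immediately because $M$ is then a simple $V$-module. The remaining step, however, has a genuine gap, and it is exactly where you flagged it. There is no reason for the lifts $Vn_i + (\phi(N)\cap\psi(N))$ to be pp-definable subgroups of $N$: pp-subgroups of a module over a commutative ring are built from images and kernels of scalar multiplications via sums and intersections, and a cyclic submodule generated by an arbitrary element is not generically of this form. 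The appeal to Fact \ref{pp1} and to distributivity of the pp-lattice is a gesture; those facts classify the pp-subgroups but do not say that an arbitrary cyclic submodule is among them. Pinning down $\dim_{V/\mfrak{p}} M = 1$ requires an actual computation of the pp-subgroups lying between $\phi(N)\cap\psi(N)$ and $\phi(N)$, using the explicit description of $N = \text{PE}(I,J)$ over a valuation domain (the machinery behind Fact \ref{opensetvdom} and \cite[\S 3]{EkHe}). That computation is precisely the content being imported from \cite[Corollary 7.6]{PPT}, and your proposal does not reprove it.
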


When $R$ is a Pr\"ufer domain, and so every localization at a maximal ideal is
a commutative valuation domain, we obtain the following consequence. Let $N$ be
an indecomposable pure injective module over $R$, and let $\mfrak{m}$ be a maximal
ideal of $R$ such that $N$ is a module over $R_\mfrak{m}$. Then every pp-1-pair
$\phi / \psi$ over $R$ with $| \, \phi / \psi (N) \, |$ finite and greater than 1 is $N$-minimal
and $\phi / \psi (N)$ is a 1-dimensional vector space over the residue field $R / \mfrak{m}$,
which must therefore be finite.

The minimal pairs of modules, over a valuation domain $V$ with maximal ideal $\mfrak{p}$ and dense value group, of the form $V/\gamma\mfrak{p}$ and $\mfrak{p}/\beta \eta V$ were described in \cite[Section 7]{PPT}  at least for valuation domains with finite residue fields. However, the results in Section \ref{order} focus our interest on pp-pairs of the form $xb=0 \, / \, c|x$ and $x=x \, / \, xd=0$. We will now prove the results about minimal pairs which we need without the assumption that $V$ has finite residue field.

The following fact can be derived from \cite[Theorem 4.3]{Gre13}.
\begin{fact}\label{opensetvdom}
Let $V$ be a valuation domain and $(I,J)$ be a pair of proper ideals in $V$. Then
$\text{PE}(I,J)\in \left(xb =0 \wedge a|x \, / \, xd=0 + c|x \right)$ if and only if
$a\neq 0$, $d\neq 0$, $c \in a J^\#$, $b\in dI^\#$, $bc \in IJ$ and $ad \notin \ann_V \text{PE}(I,J)$.
\end{fact}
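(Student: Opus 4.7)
The plan is to derive the statement by directly applying \cite[Theorem 4.3]{Gre13}, which characterises, for a valuation domain $V$ and a pair of proper ideals $(I,J)$, exactly when $\text{PE}(I,J)$ belongs to a basic Ziegler open set $(\phi/\psi)$ whose defining pp-$1$-formulae are conjunctions or sums of the atomic pp-formulae $xr=0$ and $s|x$. First I would unpack what it means for $\text{PE}(I,J)$ to lie in the open set $(xb=0\wedge a|x\,/\,xd=0+c|x)$: by Fact \ref{ppuni} and by the description of the indecomposable pp-type $p(I,J)$ recalled just before Remark \ref{rem}, this reduces to finding a single element $m\in\text{PE}(I,J)$ that satisfies $xb=0\wedge a|x$ but not $xd=0+c|x$, and membership of such an $m$ can be tested from the divisibility/annihilator data encoded in $p(I,J)$.

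I would then dispatch the degenerate conditions (1)--(2) directly. If $a=0$ then $a|x$ collapses to $x=0$, so $(xb=0\wedge a|x)(\text{PE}(I,J))=0$ and the open set is empty; if $d=0$ then $xd=0$ is trivially true, so $(xd=0+c|x)(\text{PE}(I,J))=\text{PE}(I,J)$ and again $\text{PE}(I,J)$ cannot be in the open set. Hence $a,d\ne 0$ are necessary.

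For the remaining four conditions I would argue in both directions by translation through the type $p(I,J)$. Using the rules ``$xr=0\in p(I,J)$ iff $r\in I$'' and ``$r|x\in p(I,J)$ iff $r\notin J$'' from \cite[3.4]{EkHe}, together with how $p(I,J)$ behaves under scalar multiplication, one computes the pp-subgroups $(xb=0\wedge a|x)(\text{PE}(I,J))$ and $(xd=0+c|x)(\text{PE}(I,J))$ in terms of $I$, $J$, $I^\#$ and $J^\#$. Under this translation, $c\in aJ^\#$ expresses that the divisibility $a|x$ is not already contained in $c|x$ inside $\text{PE}(I,J)$; $b\in dI^\#$ expresses symmetrically that the torsion $xb=0$ is not already contained in $xd=0$; $bc\in IJ$ is the consistency of the pp-formula $xb=0\wedge a|x$ with $p(I,J)$ at some element of $\text{PE}(I,J)$; and $ad\notin\ann_V\text{PE}(I,J)$ ensures that a witness can be chosen which is simultaneously divisible by $a$ and not killed by $d$.

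The main obstacle is to show that the four conditions are \emph{jointly} sufficient, not merely individually necessary: this requires tracking the precise coupling between $I,I^\#,J,J^\#$ inside the pp-lattice of $\text{PE}(I,J)$, which is exactly the content of \cite[Theorem 4.3]{Gre13}. I would therefore finish by quoting that theorem and verifying that our specific pp-pair falls inside its scope without generating additional hidden conditions.
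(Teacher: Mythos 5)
Your proposal is essentially correct and takes the same route as the paper: the paper itself offers no proof of this Fact beyond the single remark that it ``can be derived from \cite[Theorem 4.3]{Gre13},'' which is precisely your plan. The elaborations you add (handling the degenerate cases $a=0$ or $d=0$ directly, and the heuristic readings of the four remaining conditions in terms of divisibility and torsion data of $p(I,J)$) are reasonable scaffolding, but they remain informal and the actual verification is deferred, as in the paper, to the cited theorem. One small observation worth noting: since $V$ is a domain and $0\in\ann_V\text{PE}(I,J)$ always, the condition $ad\notin\ann_V\text{PE}(I,J)$ already forces $a\ne 0$ and $d\ne 0$, so your degenerate-case dispatch is subsumed by the sixth condition; this does not affect correctness but could have streamlined your argument.
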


 For $I$ an ideal of $V$, $I^\#$ denotes $\bigcup_{r\in V\backslash I}(I:r)$. Note that $\Ass \; \text{PE}(I,J)=I^\#$ and $\Div \; \text{PE}(I,J)=J^\#$. For $V$ a valuation domain with maximal ideal $\mfrak{p}$, $\gamma\in V\backslash\{0\}$ and $\beta \in\mfrak{p}\backslash\{0\}$, $(\gamma \mfrak{p})^\#=\mfrak{p}$ and $(\beta V)^\#=\mfrak{p}$.

For $\gamma\in R\backslash\{0\}$, the pure injective hull of $R_\mfrak{m}/\gamma\mfrak{m}R_{\mfrak{m}}$ corresponds to the pair $(\gamma\mfrak{m}R_{\mfrak{m}},\mfrak{m}R_{\mfrak{m}})$ of ideals of $R_{\mfrak{m}}$. The annihilator, as an $R_{\mfrak{m}}$-module, of $R_\mfrak{m}/\gamma\mfrak{m}R_{\mfrak{m}}$, and hence $\text{PE}(\gamma\mfrak{m}R_{\mfrak{m}},\mfrak{m}R_{\mfrak{m}})$, is $\gamma\mfrak{m}R_{\mfrak{m}}$.

For $0 \neq \delta \in \mfrak{m}$, the pure injective hull of $\mfrak{m}R_{\mfrak{m}}/\delta R_{\mfrak{m}}$ corresponds to a pair $(\beta R_{\mfrak{m}},\eta R_{\mfrak{m}})$ of ideals of $R_{\mfrak{m}}$ where $\beta,\eta\in \mfrak{m}$ and $\beta \eta R_{\mfrak{m}}=\delta R_{\mfrak{m}}$. The annihilator, as an $R_{\mfrak{m}}$-module, of $\mfrak{m}R_{\mfrak{m}}/\delta R_{\mfrak{m}}$, and hence of $\text{PE}(\beta R_{\mfrak{m}},\eta R_{\mfrak{m}})$, is $\delta R_{\mfrak{m}}$.

For $0\neq \delta\in \mfrak{m}$, the pure injective hull of $R_{\mfrak{m}}/\delta R_{\mfrak{m}}$ corresponds to the pair $(\delta R_{\mfrak{m}},\mfrak{m}R_{\mfrak{m}})$ of $R_{\mfrak{m}}$ ideals. The annihilator, as an $R_{\mfrak{m}}$-module, of $R_{\mfrak{m}}/\delta R_{\mfrak{m}}$, and hence of $\text{PE}(\delta R_{\mfrak{m}},\mfrak{m}R_{\mfrak{m}})$, is $\delta R_{\mfrak{m}}$.

Note that this means that if $\delta\in\mfrak{m}\backslash\{0\}$ then $\text{PE}(\delta R_{\mfrak{m}},\mfrak{m})$ is in the Ziegler closure of both $\text{PE}(\delta\mfrak{m}R_{\mfrak{m}},\mfrak{m}R_{\mfrak{m}})$ and $\text{PE}(\beta R_{\mfrak{m}},\eta R_{\mfrak{m}})$ where $\delta R_{\mfrak{m}}=\beta \eta R_{\mfrak{m}}$ and $\beta, \eta\in\mfrak{m}\backslash\{0\}$. Since $\Zg_{R_{\mfrak{m}}}$ embeds homeomorphically into $\Zg_{R}$ as a closed subset, it doesn't matter whether we take closures in $\Zg_R$ or $\Zg_{R_\mfrak{m}}$.

\begin{lemma}\label{condpp}
Let $b,c,d\in R$, $\mfrak{m}$ be a maximal ideal of $R$, $\gamma\in R\backslash\{0\}$ and $\delta\in\mfrak{m}\backslash\{0\}$.
\begin{enumerate}
\item $R_\mfrak{m}/\gamma\mfrak{m}R_{\mfrak{m}}$ opens $x=x\, / \, xd=0$ if and only if $\gamma\in dR_{\mfrak{m}}$.
\item $R_{\mfrak{m}}/\gamma\mfrak{m}R_{\mfrak{m}}$ opens $xb=0 \, / \, c|x$ if and only if $c \in\mfrak{m}R_{\mfrak{m}}$, $b\in\mfrak{m}R_{\mfrak{m}}$ and $bc\in\gamma\mfrak{m}R_{\mfrak{m}}$.
\item $\mfrak{m}R_{\mfrak{m}}/\delta R_{\mfrak{m}}$ opens $x=x \, / \, xd=0$ if and only if $\delta\in d\mfrak{m}R_{\mfrak{m}}$.
\item $\mfrak{m}R_{\mfrak{m}}/\delta R_{\mfrak{m}}$ opens $xb=0 \, / \, c|x$ if and only if $c \in\mfrak{m}R_{\mfrak{m}}$, $b\in\mfrak{m}R_{\mfrak{m}}$ and  $bc \in \delta R_{\mfrak{m}}$.
\end{enumerate}
\end{lemma}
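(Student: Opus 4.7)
The plan is to treat parts (1) and (3) directly, and parts (2) and (4) via Fact \ref{opensetvdom}. Throughout, the density of the value group of $R_\mfrak{m}$ is the key ingredient, since it guarantees that $\mfrak{m}R_\mfrak{m} = (\mfrak{m}R_\mfrak{m})^2$ and that $(\gamma \mfrak{m}R_\mfrak{m})^\# = (\beta R_\mfrak{m})^\# = \mfrak{m}R_\mfrak{m}$ for any nonzero $\gamma,\beta$. These are standard valuation-theoretic facts once one checks that for $0 \neq r \in R_\mfrak{m}$, the ideal $rR_\mfrak{m}$ corresponds to the values $\geq v(r)$, while $r\mfrak{m}R_\mfrak{m}$ corresponds to values $> v(r)$, and density allows approximating from above.

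For (1) and (3), opening the pair $x=x / xd=0$ in a module $N$ just means $Nd \neq 0$, equivalently $d \notin \ann_{R_\mfrak{m}}N$. The preceding discussion in the excerpt gives $\ann_{R_\mfrak{m}}(R_\mfrak{m}/\gamma\mfrak{m}R_\mfrak{m}) = \gamma\mfrak{m}R_\mfrak{m}$ and $\ann_{R_\mfrak{m}}(\mfrak{m}R_\mfrak{m}/\delta R_\mfrak{m}) = \delta R_\mfrak{m}$ (the latter uses density: one needs $\bigcap_{m \in \mfrak{m}R_\mfrak{m}\setminus 0}(\delta R_\mfrak{m} : m) = \delta R_\mfrak{m}$, and this fails without density). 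It then remains to translate $d \notin \gamma\mfrak{m}R_\mfrak{m}$ into $\gamma \in dR_\mfrak{m}$, and $d \notin \delta R_\mfrak{m}$ into $\delta \in d\mfrak{m}R_\mfrak{m}$. Both follow from total ordering of principal ideals in the valuation domain $R_\mfrak{m}$: the first because $v(d) \leq v(\gamma) \iff v(\gamma) \geq v(d)$, and the second because $v(d) < v(\delta) \iff v(\delta) - v(d) > 0 \iff \delta = d\cdot(\delta/d)$ with $\delta/d \in \mfrak{m}R_\mfrak{m}$.

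For (2) and (4), I will apply Fact \ref{opensetvdom} to $R_\mfrak{m}$, taking the parameters labelled $a$ and $d$ there both equal to $1$, so that the pair $(xb=0 \wedge a|x)/(xd=0 + c|x)$ collapses to $xb=0/c|x$. The hypotheses $a \neq 0$, $d \neq 0$ are trivially met, and $ad = 1$ is never in the annihilator of the relevant indecomposable pure injective, so the ``annihilator condition'' is automatic. What remains is: $c \in J^\#$, $b \in I^\#$, and $bc \in IJ$. For (2), $(I,J) = (\gamma\mfrak{m}R_\mfrak{m}, \mfrak{m}R_\mfrak{m})$, so by density $I^\# = J^\# = \mfrak{m}R_\mfrak{m}$ and $IJ = \gamma(\mfrak{m}R_\mfrak{m})^2 = \gamma\mfrak{m}R_\mfrak{m}$, yielding the stated conditions. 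For (4), the pair is $(\beta R_\mfrak{m}, \eta R_\mfrak{m})$ with $\beta\eta R_\mfrak{m} = \delta R_\mfrak{m}$, so again $I^\# = J^\# = \mfrak{m}R_\mfrak{m}$ (using density) and $IJ = \beta\eta R_\mfrak{m} = \delta R_\mfrak{m}$, yielding the stated conditions.

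The main obstacle is purely bookkeeping: one must be careful with the notational collision between the parameters $a, b, c, d$ appearing in Fact \ref{opensetvdom} and those in the statement of Lemma \ref{condpp}, and with the computations of $I^\#$ and $IJ$ where density is silently but essentially invoked. I would be explicit about the two density consequences ($\mfrak{m}R_\mfrak{m} = (\mfrak{m}R_\mfrak{m})^2$ and $(\gamma\mfrak{m}R_\mfrak{m})^\# = (\beta R_\mfrak{m})^\# = \mfrak{m}R_\mfrak{m}$) at the start, and then the four parts become a few lines each.
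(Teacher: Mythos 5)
Your proof is correct and takes essentially the same route as the paper, whose stated proof is simply that all four claims follow from Fact \ref{opensetvdom}. Treating (1) and (3) directly via annihilators and (2) and (4) via the Fact with $a=d=1$ is just a slightly more explicit organization of the same computation (indeed, with $a=1$, $b=c=0$ in the Fact, parts (1) and (3) also reduce to the annihilator condition), and you correctly identify the two places where density is actually used: $\ann_{R_\mfrak{m}}(\mfrak{m}R_\mfrak{m}/\delta R_\mfrak{m}) = \delta R_\mfrak{m}$ and $(\mfrak{m}R_\mfrak{m})^2 = \mfrak{m}R_\mfrak{m}$.
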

\begin{proof}
Each claim can be deduced directly from Fact \ref{opensetvdom}.
\end{proof}

Moreover, Fact \ref{opensetvdom}
implies that for $\gamma\in\mfrak{m}\backslash\{0\}$, $R_{\mfrak{m}}/\gamma R_{\mfrak{m}}$ opens $x=x\, / \, xd=0$ if and only if $d\notin \gamma R_{\mfrak{m}}$, and $R_{\mfrak{m}}/\gamma R_{\mfrak{m}}$ opens $xb=0 \, / \, c|x$ if and only if $b \in \mfrak{m} R_{\mfrak{m}}$, $c \in\mfrak{m}R_{\mfrak{m}}$ and $bc \in \gamma \mfrak{m}R_{\mfrak{m}}$.

\begin{lemma}\label{condpp1}
Let $b,c,d\in R$, $\mfrak{m}$ be a maximal ideal of $R$, $\gamma\in R\backslash\{0\}$ and $\delta\in\mfrak{m}\backslash\{0\}$.
\begin{enumerate}
\item $x=x \, / \, xd=0$ is a minimal pair for $R_{\mfrak{m}}/\gamma\mfrak{m}R_{\mfrak{m}}$ if and only if $\gamma R_{\mfrak{m}}=dR_{\mfrak{m}}$.
\item $xb=0 \, / \,  c|x$ is a minimal pair for $R_{\mfrak{m}}/\gamma\mfrak{m}R_{\mfrak{m}}$ if and only if $\gamma\notin \mfrak{m}$, $b \in\mfrak{m}$ and $c \in\mfrak{m}$.
\item $x=x \, / \, xd=0$ is never a minimal pair for $\mfrak{m}R_{\mfrak{m}}/\delta R_{\mfrak{m}}$.
\item $xb=0 \, / \, c|x$ is a minimal pair for $\mfrak{m}R_{\mfrak{m}}/\delta R_{\mfrak{m}}$ if and only if $c \in\mfrak{m}R_{\mfrak{m}}$, $b\in\mfrak{m}R_{\mfrak{m}}$ and $bc R_{\mfrak{m}}=\delta R_{\mfrak{m}}$.
\end{enumerate}
\end{lemma}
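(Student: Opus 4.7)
The strategy is to leverage pp-uniseriality (Fact \ref{ppuni}) together with Lemma \ref{condpp}, which already characterizes when each pp-pair opens $N$. What remains is to decide when, in addition, the quotient $\phi(N)/(\phi(N)\cap\psi(N))$ is a simple $R_\mfrak{m}$-module, since a minimal pair in a pp-uniserial indecomposable pure injective is precisely one for which this quotient admits no proper nontrivial pp-definable subobject. Because $N$ is uniserial over $R_\mfrak{m}$ and the pp-definable subgroups form a chain of submodules, this quotient corresponds to a quotient $I/I'$ of principal fractional ideals of $R_\mfrak{m}$. By density of the value group, such a quotient is simple precisely when $I'=\mfrak{m}R_\mfrak{m}\cdot I$, in which case $I/I'$ is isomorphic to the residue field $R/\mfrak{m}$; otherwise, density produces a principal ideal $I''$ with $I\supsetneq I''\supsetneq I'$, and the formula $t|x$ or $xt=0$ (with $t$ a generator of $I''$, which may be taken in $R$ since every nonzero element of $R_\mfrak{m}$ is a unit multiple of an element of $R$) supplies an intermediate pp-subgroup.

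For (1), writing $\gamma=d\lambda$ under the opening condition $\gamma\in dR_\mfrak{m}$, one computes $\ann_N(d)=\lambda\mfrak{m}R_\mfrak{m}/\gamma\mfrak{m}R_\mfrak{m}$, so $N/\ann_N(d)$ is isomorphic to $R_\mfrak{m}/\lambda\mfrak{m}R_\mfrak{m}$; this is the residue field iff $\lambda$ is a unit, i.e.\ $\gamma R_\mfrak{m}=dR_\mfrak{m}$, and otherwise $\lambda|x$ defines an intermediate subgroup $\lambda R_\mfrak{m}/\gamma\mfrak{m}R_\mfrak{m}$. For (2), the case $\gamma\notin\mfrak{m}$ collapses $N$ to the residue field itself, so minimality is automatic once $b,c\in\mfrak{m}$; if instead $\gamma\in\mfrak{m}$, the opening condition $bc\in\gamma\mfrak{m}R_\mfrak{m}$ forces $v(b)+v(c)>v(\gamma)$, and density supplies $t\in R_\mfrak{m}$ with $v(\gamma)-v(b)<v(t)<v(c)$, so that $t|x$ defines a subgroup strictly between $cN$ and $\ann_N(b)$.

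For (3), when the pair opens one has $v(\delta)>v(d)$; by density pick $t$ with $v(d)<v(t)<v(\delta)$, and then $\ann_N(t)=(\delta/t)R_\mfrak{m}/\delta R_\mfrak{m}$ sits strictly between $\ann_N(d)=(\delta/d)R_\mfrak{m}/\delta R_\mfrak{m}$ and $N=\mfrak{m}R_\mfrak{m}/\delta R_\mfrak{m}$, so no pair of this form can be minimal. For (4), a direct computation under the opening hypotheses $b,c\in\mfrak{m}R_\mfrak{m}$ and $bc\in\delta R_\mfrak{m}$ yields that $\ann_N(b)/cN$ is isomorphic to $(\delta/b)R_\mfrak{m}/c\mfrak{m}R_\mfrak{m}$, which is the residue field exactly when $v(\delta)-v(b)=v(c)$, i.e.\ $bcR_\mfrak{m}=\delta R_\mfrak{m}$; in every other subcase compatible with opening (including the degenerate cases $v(b)\geq v(\delta)$ or $v(c)\geq v(\delta)$, where one of $\ann_N(b)$, $cN$ collapses) density inserts an intermediate principal ideal, and hence an intermediate pp-subgroup.

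The hard part is the bookkeeping in item (4), because of the several degenerate subcases and the need to verify that the quotient $\ann_N(b)/cN$ really does become the residue field under the tight equality $v(b)+v(c)=v(\delta)$. The other three items follow essentially by combining pp-uniseriality, the ideal-to-pp-subgroup dictionary on these uniserial modules, and the density of the value group, which is what permits the production of separating elements $t$ in each non-minimal case.
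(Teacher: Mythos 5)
Your proof is correct and, unlike the paper's, avoids invoking Ziegler's isolation criterion. The paper proves each forward implication by the same direct computation you use (showing the quotient $\phi(N)/\psi(N)$ is $R/\mfrak{m}$), but for the converse directions it appeals to \cite[Corollary 8.12]{Zi}: if $\phi/\psi$ were $N$-minimal it would isolate $N$ in its Ziegler closure, and the paper exhibits a second point — $\text{PE}(R_\mfrak{m}/\gamma R_\mfrak{m})$ in items (1) and (2), $\text{PE}(R_\mfrak{m}/\delta R_\mfrak{m})$ in items (3) and (4) — lying in that closure and also opening the pair, giving a contradiction. You instead use density of the value group to manufacture an explicit separating pp-formula ($t\mid x$ or $xt=0$) whose value in $N$ sits strictly between $\phi(N)\cap\psi(N)$ and $\phi(N)$; this makes the argument more self-contained at the price of the valuation-theoretic bookkeeping you flag in (4). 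Both routes are sound. Two small points of hygiene in your write-up: in (2) the inequality $v(\gamma)-v(b)<v(t)<v(c)$ should be read with the additional constraint $v(t)>0$ so that $t\in\mfrak{m}R_\mfrak{m}$ and $tN\subsetneq N$ (this is automatic when $v(b)<v(\gamma)$, but when $v(b)\geq v(\gamma)$ one should take $0<v(t)<v(c)$); and the phrase in your plan describing $\phi(N)/\psi(N)$ as a quotient of \emph{principal} fractional ideals is not literally accurate — some of the relevant ideals are of the form $r\mfrak{m}R_\mfrak{m}$, which is not principal when the value group is dense — though your item-by-item computations handle these correctly.
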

\begin{proof}
Recall, \cite[Corollary 8.12]{Zi}, that if $N$ is an indecomposable pure injective $R$-module and $\phi/\psi$ is an $N$-minimal pair then $(\phi/\psi)$ isolates $N$ in its Ziegler closure.

\smallskip

\noindent
(1) If $\gamma R_{\mfrak{m}}=dR_{\mfrak{m}}$ then, for $r \in R_{\mfrak{m}}$,
$r+\gamma\mfrak{m}R_{\mfrak{m}}$ satisfies $xd=0$ if and only if $r\in\mfrak{m}R_{\mfrak{m}}$. Thus, as an $R$-module, $x=x/xd=0$ evaluated at $R_{\mfrak{m}}/\gamma\mfrak{m}R_{\mfrak{m}}$ is isomorphic to the simple $R$-module $R/\mfrak{m}$. So $x=x/xd=0$ is a minimal pair for $R_{\mfrak{m}}/\gamma\mfrak{m}R_{\mfrak{m}}$.

For the converse, suppose that $R_{\mfrak{m}}/\gamma\mfrak{m}R_{\mfrak{m}}$ opens $x=x/xd=0$, so $\gamma\in dR_{\mfrak{m}}$. If $d\notin \gamma R_{\mfrak{m}}$ then $\text{PE}(R_{\mfrak{m}}/\gamma R_{\mfrak{m}})$ opens $x=x/xd=0$. Since $\text{PE}(R_{\mfrak{m}}/\gamma R_{\mfrak{m}})$ is in the Ziegler closure of $\text{PE}(R_{\mfrak{m}}/\gamma\mfrak{m}R_{\mfrak{m}})$, this implies that $x=x/xd=0$ is not a $\text{PE}(R_{\mfrak{m}}/\gamma\mfrak{m}R_{\mfrak{m}})$-minimal pair and hence also not a $R_{\mfrak{m}}/\gamma\mfrak{m}R_{\mfrak{m}}$-minimal pair.

\smallskip

\noindent
(2) Suppose $\gamma\notin \mfrak{m}$, $b, c \in\mfrak{m}$. Since $\gamma\notin \mfrak{m}$, $R_{\mfrak{m}}/\gamma\mfrak{m}R_{\mfrak{m}}=R/\mfrak{m}$ is a simple $R$-module. Therefore $xb=0\, / \, c|x$ is a $R_{\mfrak{m}}/\gamma\mfrak{m}R_{\mfrak{m}}=R/\mfrak{m}$-minimal pair if and only if $R_{\mfrak{m}}/\gamma\mfrak{m}R_{\mfrak{m}}$ opens $xb=0 \, / \, c|x$. That $xb=0 \, / \, c|x$ is a $R_{\mfrak{m}}/\gamma\mfrak{m}R_{\mfrak{m}}=R/\mfrak{m}$-minimal pair now follows from (2) in
Lemma \ref{condpp}.

Suppose that $xb=0 \, / \, c|x$ is an $R_\mfrak{m}/\gamma \mfrak{m} R_{\mfrak{m}}$-minimal pair. Again from (2) in Lemma \ref{condpp}, $c \in\mfrak{m}R_{\mfrak{m}}$, $b\in\mfrak{m}R_{\mfrak{m}}$ and $bc \in\gamma \mfrak{m} R_{\mfrak{m}}$. Now, if $\gamma \in \mfrak{m}$ then $\text{PE}(R_{\mfrak{m}}/\gamma R_{\mfrak{m}})$ opens $xb=0 \, / \, c|x$. Hence $xb=0 \, / \, c|x$ is not a $R_\mfrak{m}/\gamma \mfrak{m} R_{\mfrak{m}}$-minimal pair. So $\gamma \notin \mfrak{m}$.

\smallskip

\noindent
(3) The module $\mfrak{m}R_{\mfrak{m}}/\delta R_{\mfrak{m}}$ opens $x=x \, / \, xd=0$ if and only if $d\notin \delta R_{\mfrak{m}}$, and $R_{\mfrak{m}}/\delta R_{\mfrak{m}}$ opens $x=x/xd=0$ if and only if $d\notin \delta R_{\mfrak{m}}$. Since $\text{PE}(R_{\mfrak{m}}/\delta R_{\mfrak{m}})$ is in the closure of $\text{PE}(\mfrak{m}R_{\mfrak{m}}/\delta R_{\mfrak{m}})$, $x=x \, / \, xd=0$ is never a $\mfrak{m}R_{\mfrak{m}}/\delta R_{\mfrak{m}}$-minimal pair.

\smallskip

\noindent
(4) Suppose that $c \in\mfrak{m}R_{\mfrak{m}}$, $b\in\mfrak{m}R_{\mfrak{m}}$ and $bc R_{\mfrak{m}}=\delta R_{\mfrak{m}}$. Then the solution set of $xb=0$ in $\mfrak{m}R_{\mfrak{m}}/\delta R_{\mfrak{m}}$ is $cR_{\mfrak{m}}/\delta R_{\mfrak{m}}$ and the solution set of $c|x$ in $\mfrak{m}R_{\mfrak{m}}/\delta R_{\mfrak{m}}$ is $c \mfrak{m}R_{\mfrak{m}}/\delta R_{\mfrak{m}}$. So $xb=0\, / \, c|x$ evaluated at $\mfrak{m}R_{\mfrak{m}}/\delta R_{\mfrak{m}}$ is the simple $R$-module $R/\mfrak{m}$ and hence $xb=0\, / \, c|x$ is a $\mfrak{m}R_{\mfrak{m}}/\delta R_{\mfrak{m}}$-minimal pair.

Suppose $xb=0\, / \, c|x$ is a $\mfrak{m}R_{\mfrak{m}}/\delta R_{\mfrak{m}}$-minimal pair. By (4) in
Lemma \ref{condpp}, $c \in \mfrak{m}R_{\mfrak{m}}$, $b\in \mfrak{m}R_{\mfrak{m}}$ and $bc \in\delta R_{\mfrak{m}}$. Suppose, for a contradiction, that $bc \in \delta \mfrak{m} R_{\mfrak{m}}$. Then $R_{\mfrak{m}}/\delta R_{\mfrak{m}}$ opens $xb=0 \, / \, c|x$. So, we can argue as in (1) and (2) that $xb=0 \, / \, c|x$ is not a $\mfrak{m}R_{\mfrak{m}}/\delta R_{\mfrak{m}}$-minimal pair.
\end{proof}

Now, still for $R$ a Pr\"ufer domain, let us come back to the indecomposable pure injective $R$-modules of the form $\text{PE}(\mfrak{m}R_\mfrak{m}/\beta \eta R_\mfrak{m})$ and $\text{PE}(R_\mfrak{m}/\gamma\mfrak{m}R_\mfrak{m})$ where $R/\mfrak{m}$ is finite, $\beta,\eta\in\mfrak{m}\backslash\{0\}$ and $\gamma\in R\backslash\{0\}$ (those admitting a pp-pair $\phi/\psi$ with $\vert\phi/\psi(N)\vert$ finite but not equal to $1$).

The value of $\vert\phi/\psi(N)\vert$ for a pp-pair $\phi/\psi$ when $N$ is one of the above $R_{\mfrak{m}}$-uniserial modules will be determined by conditions of the form $a\in bR_{\mfrak{m}}$ and $a\in b\mfrak{m}R_{\mfrak{m}}$ with $a, b \in R$. The following lemma, together with Fact \ref{Tuganbaev}, allows us to convert such conditions into conditions of the form $c\in \mfrak{m}$ (see also the previous
Remark \ref{rem}).

\begin{lemma}\label{relmaxideal}
Let $a,b\in R\backslash\{0\}$ and let $\alpha,r,s\in R$ be such that $b\alpha =as$ and $a(\alpha-1)=br$. Then
\begin{enumerate}
\item $b\in aR_{\mfrak{m}}$ if and only if $\alpha\notin\mfrak{m}$ or $r\notin\mfrak{m}$;
\item $a\in b\mfrak{m}R_{\mfrak{m}}$ if and only if $\alpha\in\mfrak{m}$ and $r\in\mfrak{m}$.
\end{enumerate}
\end{lemma}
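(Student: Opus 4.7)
The plan is to observe that part~(1) is essentially the content of \cite[Lemma~5.5]{GLPT} (quoted in Remark~\ref{rem}), applied to the maximal ideal $\mfrak{m}$, and then to obtain part~(2) as a refinement of the same argument. So the real work is part~(2), while part~(1) is either cited or re-derived along the way by the same manipulations.

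For the re-derivation of part~(1), I would argue in $R_{\mfrak{m}}$ by cases on whether $\alpha,\alpha-1,r$ are units. If $\alpha\notin\mfrak{m}$, then $\alpha$ is a unit and $b\alpha=as$ gives $b=as\alpha^{-1}\in aR_{\mfrak{m}}$; if $r\notin\mfrak{m}$, then $r$ is a unit and $a(\alpha-1)=br$ gives $b=a(\alpha-1)r^{-1}\in aR_{\mfrak{m}}$. For the converse of~(1), assume $\alpha,r\in\mfrak{m}$. Then $\alpha-1\notin\mfrak{m}$, so $\alpha-1$ is a unit in $R_{\mfrak{m}}$, and the relation $a(\alpha-1)=br$ yields
\[
a=br(\alpha-1)^{-1}\in b\mfrak{m}R_{\mfrak{m}},
\]
because $r\in\mfrak{m}$. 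In particular $a\in bR_{\mfrak{m}}$. If moreover $b\in aR_{\mfrak{m}}$, write $b=au$ with $u\in R_{\mfrak{m}}$; substituting gives $b=bur(\alpha-1)^{-1}$, so $b\bigl(1-ur(\alpha-1)^{-1}\bigr)=0$. Since $b\neq 0$ and $R_{\mfrak{m}}$ is a domain, $1=ur(\alpha-1)^{-1}\in\mfrak{m}R_{\mfrak{m}}$, contradicting properness of $\mfrak{m}R_{\mfrak{m}}$. Hence $b\notin aR_{\mfrak{m}}$, proving~(1).

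The forward direction of~(2) has already been obtained above: when $\alpha,r\in\mfrak{m}$, the identity $a=br(\alpha-1)^{-1}$ shows $a\in b\mfrak{m}R_{\mfrak{m}}$. For the converse, suppose $a\in b\mfrak{m}R_{\mfrak{m}}$, i.e.\ $a=bm$ with $m\in\mfrak{m}R_{\mfrak{m}}$, and suppose towards a contradiction that $\alpha\notin\mfrak{m}$ or $r\notin\mfrak{m}$. In either case, as in the proof of~(1), we get $b\in aR_{\mfrak{m}}$, say $b=av$ with $v\in R_{\mfrak{m}}$ (explicitly $v=s\alpha^{-1}$ in the first case, $v=(\alpha-1)r^{-1}$ in the second). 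Combining $a=bm$ and $b=av$ gives $b(1-mv)=0$; since $b\neq 0$ and $R_{\mfrak{m}}$ is a domain, $1=mv\in\mfrak{m}R_{\mfrak{m}}$, again contradicting properness. Therefore $\alpha\in\mfrak{m}$ and $r\in\mfrak{m}$, completing part~(2).

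There is no serious obstacle here; the only point that requires a little care is the consistent use of the fact that $R_{\mfrak{m}}$ is a domain together with the identification of which of $\alpha$, $\alpha-1$, $r$ becomes a unit in which case, and then running the same ``$b(1-\text{unit}\cdot\text{something in }\mfrak{m}R_{\mfrak{m}})=0$'' trick twice.
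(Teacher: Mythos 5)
Your proof is correct. The paper disposes of the lemma in one line: part~(1) is cited verbatim from \cite[Lemma~5.5]{GLPT}, and part~(2) is deduced from~(1) by the single observation that, over the valuation domain $R_{\mfrak{m}}$ with $a,b\neq 0$, one has $a\in b\mfrak{m}R_{\mfrak{m}}$ if and only if $b\notin aR_{\mfrak{m}}$. You instead give a self-contained argument: you re-derive~(1) directly from the Tuganbaev identities by treating the three unit/non-unit cases for $\alpha$, $\alpha-1$, $r$, and then establish~(2) by running the same domain cancellation trick ($b(1-mv)=0$ forces $mv=1$, impossible since $mv\in\mfrak{m}R_{\mfrak{m}}$). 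In effect you are reproving both \cite[Lemma~5.5]{GLPT} and the valuation-domain dichotomy the paper invokes as a known fact. What your version buys is transparency and independence from the reference; what the paper's version buys is brevity, since once the dichotomy $b\in aR_{\mfrak{m}}\Leftrightarrow a\notin b\mfrak{m}R_{\mfrak{m}}$ is noticed, (2) is just the contrapositive of~(1). It might be worth stating that dichotomy explicitly in your write-up, since it is doing all the work in the converse directions and factoring it out would shorten your argument considerably.
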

\begin{proof}
(1) is \cite[Lemma 5.5]{GLPT}. (2) follows from (1) since $a\in b\mfrak{m}R_\mfrak{m}$ if and only if $b\notin aR_\mfrak{m}$.
\end{proof}

Note that, see \cite{GLPT} just after Lemma 5.5, over a B\'ezout domain things become even simpler.

This leads us to consider what we call a {\sl condition on a maximal ideal} (of $R$), that is, a condition of the form $r \in M$ where $r \in R$ and $M$ is a variable for a maximal ideal (of $R$). Let $\mathbb{B}$ denote the set of Boolean combinations of these conditions. We will say that a maximal ideal $\mfrak{m}$ of $R$ satisfies such a Boolean combination $\Delta$ if when we replace all instances of $M$ by $\mfrak{m}$,  $\Delta$ is true in $R$.

Any $\Delta\in\mathbb{B}$  is equivalent to a disjunction of conditions of the form
\[\bigwedge_{i=1}^l a_i\in M\wedge b\notin M\] for some $l \in\N$.

To see this first put $\Delta$ into disjunctive normal form and then note that a condition of the form $\bigwedge_{i=1}^l b_i\notin M$ is equivalent to $\prod_{i=1}^l b_i\notin M$.

Note that, when $R$ is B\'ezout, also a conjunction of the form $\bigwedge_{i=1}^l a_i\in M$ is equivalent to a single condition $a \in M$ where $a$ is the greatest common divisor of $a_1, \ldots, a_l$. So, when $R$ is B\'ezout, each $\Delta\in\mathbb{B}$ is equivalent to a disjunction of conditions of the form $a\in M \wedge b\notin M$.

Now, for every $\Delta$ in $\mathbb{B}$, let $\text{PP}_0 (R, \Delta)$ denote the set of all $(p,n)\in \mathbb{P}\times \N$ such that there exist $s, k_1,\ldots, k_s\in \N$ and maximal ideals $\mfrak{m}_1,\ldots,\mfrak{m}_s$ of $R$ such
that $n\in\text{Span}_{\N_0}\{k_1,\ldots,k_s\}$ and for all $i=1, \ldots, s$,
\begin{enumerate}
\item $|R/\mfrak{m}_i|=p^{k_i}$,
\item $\mfrak{m}_i$ satisfies $\Delta$.
\end{enumerate}

Let $\text{PP}_0 (R)$  be the set of all $(p,n,\Delta)\in \mathbb{P}\times\N\times\mathbb{B}$ such that $(p, n) \in \text{PP}_0 (R, \Delta)$.

\begin{lemma}\label{PPrecimpPP1rec}
Suppose that $PP^\star (R)$ is recursive. Then $PP_0 (R)$ is recursive.
\end{lemma}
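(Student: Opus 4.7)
The plan is to reduce deciding membership in $\text{PP}_0 (R)$ to finitely many queries to the oracle for $\text{PP}^\star (R)$.

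First I would put the input formula $\Delta$ into the disjunctive normal form recalled just before the lemma, writing
\[
\Delta \; \equiv \; \bigvee_{j=1}^J D_j, \qquad D_j \; = \; \bigwedge_{i=1}^{l_j} a_{j,i} \in M \, \wedge \, b_j \notin M,
\]
with $l_j \in \N$ and $a_{j,i}, b_j \in R$. This conversion is effective because $R$ is effectively given and $\Delta$ is a finite syntactic object. A maximal ideal $\mfrak{m}$ of $R$ satisfies $\Delta$ if and only if it satisfies at least one $D_j$.

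The key claim is the following combinatorial equivalence: $(p,n) \in \text{PP}_0 (R, \Delta)$ if and only if there exist $n_1, \ldots, n_J \in \N_0$ with $n = n_1 + \cdots + n_J$ such that, for every $j$ with $n_j \geq 1$,
\[
(p, n_j, a_{j,1}, \ldots, a_{j,l_j}, b_j) \; \in \; \text{PP}_{l_j} (R) \; \subseteq \; \text{PP}^\star (R).
\]
For the forward direction, given witnesses $\mfrak{m}_1, \ldots, \mfrak{m}_s$, $k_1, \ldots, k_s \in \N$ and $\lambda_1, \ldots, \lambda_s \in \N_0$ with $n = \sum_i \lambda_i k_i$, for each $i$ with $\lambda_i \geq 1$ choose some $\sigma(i) \in \{1, \ldots, J\}$ such that $\mfrak{m}_i$ satisfies $D_{\sigma(i)}$; then put $n_j := \sum_{\sigma(i) = j} \lambda_i k_i$, so that $\sum_j n_j = n$ and the $\mfrak{m}_i$'s with $\sigma(i) = j$ jointly witness membership of $(p, n_j, a_{j,1}, \ldots, a_{j,l_j}, b_j)$ in $\text{PP}_{l_j} (R)$. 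For the backward direction, for each $j$ with $n_j \geq 1$ take a witnessing family from the hypothesis $\text{PP}_{l_j} (R)$, and union all these families; every maximal ideal appearing satisfies some $D_j$ hence $\Delta$, and the resulting span relation yields $n$.

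Given the claim, the decision procedure is: enumerate the finitely many decompositions $n = n_1 + \cdots + n_J$ with $n_j \in \N_0$, and for each of them call the oracle for $\text{PP}^\star (R)$ on each summand with $n_j \geq 1$; accept iff some decomposition passes all calls. This terminates because there are only $\binom{n + J - 1}{J - 1}$ decompositions. The one point requiring some care, rather than an obstacle per se, is to remember that the Span condition in the definition of $\text{PP}_l (R)$ permits arbitrary non-negative multiplicities $\lambda_i$: this is precisely what allows us to split a single witness set for $\Delta$ according to which disjunct $D_j$ each maximal ideal satisfies, and then conversely to recombine witnesses coming from the different $D_j$'s into a joint witness for $\Delta$.
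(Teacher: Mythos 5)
Your proof is correct and follows essentially the same route as the paper's: rewrite $\Delta$ in the disjunctive normal form from the preceding discussion, then split a witness set for $(p,n)\in\text{PP}_0(R,\Delta)$ according to which disjunct each maximal ideal satisfies, yielding a decomposition $n=n_1+\cdots+n_J$ with each nonzero $n_j$ witnessed by a query to $\text{PP}^\star(R)$, and conversely recombine witnesses; the only cosmetic difference is that the paper pads all disjuncts to a common length $l$ while you allow varying lengths $l_j$, which is harmless since the oracle is $\text{PP}^\star(R)=\bigcup_l\text{PP}_l(R)$.
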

\begin{proof}
Let $\Delta$ have the form $\bigvee_{i=1}^m (\bigwedge_{h=1}^la_{ih}\in M\wedge b_i\notin M)$ with $a_{ih}, b_i\in R$. This can be assumed without loss of generality, adding if necessary $0$ for $a_{ih}$ and $1$ for $b_i$.

We will now show that $(p,n,\Delta)\in \text{PP}_0 (R)$ if and only if there exists $(\delta_1,\ldots,\delta_m)\in (\N_0)^m$ such that $\sum_{i=1}^m\delta_i=n$ and for all $1\leq i\leq l$, either $(p,\delta_i,a_{i1},\ldots,a_{il},b_i)\in \text{PP}^\star (R)$ or $\delta_i=0$. Since the set of $(\delta_1,\ldots,\delta_m)\in (\N_0)^m$ such that $\sum_{i=1}^m\delta_i=n$ is finite and computable given $n$, this will imply that if $\text{PP}^\star (R)$ is recursive then so is $\text{PP}_0 (R)$.

Suppose that $n=\sum_{i=1}^m\delta_i$, each $\delta_i\in\N_0$ and for all $1\leq i\leq m$, either $(p,\delta_i, a_{i1},\ldots, a_{il}, b_i)\in\text{PP}^\star (R)$ or $\delta_i=0$. So, for each $1\leq i\leq m$ with $\delta_i\neq 0$, there exist $k_{i1},\ldots k_{is_i}\in\N$ such that $\delta_i\in \text{Span}_{\N_0}\{k_{i1},\ldots k_{is_i}\}$ and maximal ideals $\mfrak{m}_{i1},\ldots,\mfrak{m}_{is_i}$ such that $|R/\mfrak{m}_{ij}|=p^{k_{ij}}$, $a_{ih}\in \mfrak{m}_{ij}$ for $1\leq h\leq l$ and $b_i\notin\mfrak{m}_{ij}$. Thus, for $1\leq i\leq m$ with
$\delta_i \neq 0$ and $1\leq j\leq s_i$, $\mfrak{m}_{ij}$ satisfies $\Delta$, $|R/\mfrak{m}_{ij}|=p^{k_{ij}}$ and $n\in\text{Span}_{\N_0}\{k_{ij} \st 1\leq i\leq m \text{ and } 1\leq j\leq s_i\}$.

Now suppose that $(p,n)\in\mathbb{P}\times \N$ and that there exist $k_1,\ldots, k_s\in \N$ such that $n\in\text{Span}_{\N_0}\{k_1,\ldots,k_s\}$ and maximal ideals $\mfrak{m}_1,\ldots,\mfrak{m}_s$ such that $|R/\mfrak{m}_j|=p^{k_j}$ and $\mfrak{m}_j$ satisfies $\Delta$ for $1\leq j\leq s$. Let $\lambda_1,\ldots \lambda_s \in \N_0$ be such that $n=\sum_{j=1}^s \lambda_jk_j$. We may partition $\{1,\ldots,s\}$ into sets $A_1,\ldots A_m$ such that, for all $1 \leq j \leq s$ and $1 \leq i \leq m$, $j\in A_i$ implies that $\mfrak{m}_j$ satisfies $a_{ih}\in\mfrak{m}_j$ for $1\leq h\leq l$ and $b_i\notin\mfrak{m}_j$. Let $\delta_i:=\sum_{j\in A_i} \lambda_jk_j$. If $\delta_i \neq 0$ then $(p,\delta_i, a_{i1},\ldots a_{il} ,b_i)\in \text{PP}^\star (R)$ and $\sum_{i=1}^m\delta_i=n$ as required.
\end{proof}
The next definition describes the families of modules we are going to deal with. Indeed the summands of these families were already treated at least implicitly in this section.

\begin{definition}
\

\begin{enumerate}
\item[($S_\gamma$)] For $\gamma\in R\backslash\{0\}$,  let $S_\gamma$ be the set of $R$-modules of the form $\oplus_{i=1}^m N_\gamma (\mfrak{m_i})$ where $m \in \N$ and $\mfrak{m}_1,\ldots,\mfrak{m}_m$ are maximal ideals of $R$.
\item[($S'_{\beta, \eta}$)] For $\beta, \eta\in R\backslash\{0\}$,
let $S'_{\beta, \eta}$ be the set of $R$-modules of the form $\oplus_{i=1}^m N'_{\beta, \eta} (\mfrak{m_i})$
where $m \in \N$ and $\mfrak{m}_1,\ldots,\mfrak{m}_m$ are maximal ideals of $R$ containing both $\beta$
and $\eta$.
\item[($T_{\beta,\eta}$)] For $\beta, \eta\in R\backslash\{0\}$, let $T_{\beta,\eta}$ be the set of $R$-modules of the form $\oplus_{i=1}^m R/\mfrak{m}_i$ where $m \in \N$ and $\mfrak{m}_1,\ldots,\mfrak{m}_m$ are maximal ideals of $R$ containing both $\beta$
and $\eta$.
\end{enumerate}

\end{definition}

The preparation of the proof of the main theorem culminates in the next proposition.
\begin{proposition}\label{prepp}
Let $R$ be a Pr\"ufer domain such that each localization of $R$ at a maximal ideals has dense value group.
Suppose that $R$ is effectively given and $\text{PP}^\star (R)$ is recursive.

\begin{enumerate}[(a)]
\item Fix $\gamma \in R\backslash\{0\}$. Then there is an algorithm which, given
\begin{itemize}
\item $p\in\mathbb{P}$,
\item pp-pairs $\phi_i /  \psi_i$ for $1 \leq i \leq t+s$ and $\phi_{2,j}/ \psi_{2,j}$ for $1\leq j \leq u$ of the form $xb=0 \, / \, c|x$ or $x=x \, / \, xd=0$, with $b, c, d \in R$,
\item positive integers $w$, $n_i$ for $1\leq i\leq t$ and $l_i$ for $t+1\leq i \leq t+s$,
\end{itemize}
answers whether there exists an $R$-module $N\in S_\gamma$ satisfying the sentences $\vert \, x=x \, / \, x\gamma=0 \, \vert =p^w$ and
\[\bigwedge_{i=1}^{t}\vert\phi_i/\psi_i\vert=p^{n_i}
\wedge\bigwedge_{j=1}^u \vert\phi_{2,j} / \psi_{2,j} \vert=1
\wedge\bigwedge_{i=t+1}^{t+s} \vert\phi_i / \psi_i \vert \geq p^{l_i}. \]
\item Fix $\beta, \eta \in R\backslash\{0\}$. Then there is an algorithm which, given
\begin{itemize}
\item $p\in\mathbb{P}$,
\item pp-pairs $\phi_i/ \psi_i$ for $1 \leq i \leq t+s$ and $\phi_{2,j}/ \psi_{2,j}$ for $1\leq j \leq u$ of the form $xb=0 \, / \, c|x$ or $x=x \, / \, xd=0$, with $b, c, d \in R$,
\item positive integers $w$, $n_i$ for $1\leq i\leq t$ and $l_i$ for $t+1\leq i \leq t+s$,
\end{itemize}
answers whether there exists an $R$-module $M \in T_{\beta, \eta}$ satisfying the sentences $\vert \, x \eta = 0 \, / \, \beta |x \,  \vert =p^w$ and
\[\bigwedge_{i=1}^{t}\vert\phi_i/\psi_i\vert=p^{n_i}
\wedge\bigwedge_{j=1}^u \vert\phi_{2,j} / \psi_{2,j} \vert=1
\wedge\bigwedge_{i=t+1}^{t+s} \vert\phi_i / \psi_i \vert \geq p^{l_i}. \]

\item Fix $\beta, \eta \in R\backslash\{0\}$. Then there is an algorithm which, given
\begin{itemize}
\item $p\in\mathbb{P}$,
\item pp-pairs $\phi_i/ \psi_i$ for $1 \leq i \leq t+s$ and $\phi_{2,j}/ \psi_{2,j}$ for $1\leq j \leq u$ of the form $xb=0 \, / \, c|x$ or $x=x \, / \, xd=0$, with $b, c, d \in R$,
\item positive integers $w$, $n_i$ for $1\leq i\leq t$ and $l_i$ for $t+1\leq i \leq t+s$,
\end{itemize}
answers whether there exists an $R$-module $N' \in S'_{\beta, \gamma}$ satisfying the sentences $\vert \, x \eta = 0 \, / \, \beta |x \,  \vert =p^w$ and
\[\bigwedge_{i=1}^{t}\vert\phi_i/\psi_i\vert=p^{n_i}
\wedge\bigwedge_{j=1}^u \vert\phi_{2,j} / \psi_{2,j} \vert=1
\wedge\bigwedge_{i=t+1}^{t+s} \vert\phi_i / \psi_i \vert \geq p^{l_i}. \]
\end{enumerate}
\end{proposition}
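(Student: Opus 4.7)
The plan is to reduce each of the three parts to a finite combinatorial search, each candidate of which is checked for feasibility by a query to $\text{PP}_0 (R)$; this is decidable by Lemma \ref{PPrecimpPP1rec}. I will detail the strategy for part (a) and indicate the minor modifications for (b) and (c).

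First, a direct computation gives $|x=x \, / \, x\gamma=0 \, (N_\gamma(\mfrak{m}))| = |R/\mfrak{m}|$, so the benchmark $|x=x \, / \, x\gamma=0| = p^w$ forces every summand of $N = \bigoplus_i N_\gamma(\mfrak{m}_i)$ to have residue field of $p$-power order, with the exponents summing to $w$. For every other pp-pair $\phi/\psi$ appearing in the sentence, Lemma \ref{finiteimpminpair} ensures that each summand contributes to $|\phi/\psi(N)|$ a factor of either $1$ (closed), $|R/\mfrak{m}| = p^k$ (open and minimal), or $\infty$ (open and non-minimal). Which case holds is governed, via Lemmas \ref{condpp} and \ref{condpp1}, by divisibility conditions in $R_\mfrak{m}$; Lemma \ref{relmaxideal}, together with the effectively computable Tuganbaev elements provided by Fact \ref{Tuganbaev}, converts these divisibility conditions into Boolean combinations in $\mathbb{B}$ of atomic conditions $r \in M$ with $r \in R$.

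Now collect all scalars $r \in R$ appearing in the Boolean conditions so produced into a finite list $r_1, \dots, r_N$. Every maximal ideal $\mfrak{m}$ falls into exactly one \emph{type}: the subset $\tau \subseteq \{1, \dots, N\}$ with $r_j \in \mfrak{m}$ precisely when $j \in \tau$, described by a Boolean condition $\Delta_\tau \in \mathbb{B}$. Maximal ideals of the same type exhibit identical closed / minimally-open / non-minimally-open behavior at every pp-pair in the sentence. Hence a module $N$ in $S_\gamma$ realising the sentence corresponds to a choice of non-negative integers $S_\tau$, one per type, representing the total residue-field exponent contributed by summands of type $\tau$, subject to the following constraints:
\begin{itemize}
\item $S_\tau = 0$ unless $\tau$ is \emph{admissible}, that is, every $\phi_{2,j}/\psi_{2,j}$ is closed at type $\tau$ and no $\phi_i/\psi_i$ with $1 \le i \le t$ is non-minimally open at type $\tau$;
\item $\sum_\tau S_\tau = w$;
\item $\sum_{\tau \text{ minimally opens } i} S_\tau = n_i$ for every $1 \le i \le t$;
\item for each $t+1 \le i \le t+s$, either some admissible type $\tau$ with $S_\tau > 0$ opens $\phi_i/\psi_i$ non-minimally (automatically forcing $|\phi_i/\psi_i(N)| = \infty \ge p^{l_i}$), or $\sum_{\tau \text{ minimally opens } i} S_\tau \ge l_i$.
\end{itemize}

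Each $S_\tau$ is bounded by $w$ and there are only finitely many types, so the candidate tuples form a finite, effectively enumerable set. For each candidate I would then check, for every $\tau$ with $S_\tau > 0$, whether $(p, S_\tau, \Delta_\tau) \in \text{PP}_0 (R)$; this is decidable by Lemma \ref{PPrecimpPP1rec}. The desired module exists exactly when some candidate passes both the combinatorial and the $\text{PP}_0$ tests. The main obstacle I anticipate is the bookkeeping required to extract the Boolean conditions for openness and minimality uniformly from the input pp-pairs; this is routine given the explicit criteria in Lemmas \ref{condpp} and \ref{condpp1}. Part (b) follows in the same way, with the simplification that $R/\mfrak{m}_i$ is a simple $R$-module, so the non-minimal-open case cannot arise and every open pair contributes exactly $|R/\mfrak{m}_i|$; admissibility must additionally include $\beta, \eta \in M$. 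Part (c) runs verbatim using parts (3) and (4) of Lemmas \ref{condpp} and \ref{condpp1} in place of (1) and (2), with the same admissibility constraint $\beta, \eta \in M$.
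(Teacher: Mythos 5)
Your argument is correct and follows essentially the same strategy as the paper: reduce to a finite enumeration of integer allocations, one per class of maximal ideal, where class membership is expressed as a Boolean condition in $\mathbb{B}$ via Lemmas \ref{relmaxideal}, \ref{condpp}, \ref{condpp1}, and each allocation is checked through $\text{PP}_0(R)$ using Lemma \ref{PPrecimpPP1rec}. The only cosmetic difference is that you partition maximal ideals by the atomic type $\tau$ (which scalars lie in $\mfrak{m}$) while the paper partitions by the coarser behavior pattern $f\in\{0,1,\infty\}^{t+s}$; the two are interchangeable here.
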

\begin{proof}
We provide the proof of (a), and then we explain how it can be adapted to show (b) and (c).

Let $\Gamma$ be the set of functions $f:\{1,\ldots,t+s\}\rightarrow\{0,1,\infty\}$. Let $X$ be the set of pairs $(\Gamma',\delta)$ where $\Gamma'\subseteq \Gamma$ and $\delta:\Gamma'\rightarrow \N$ satisfy
\begin{enumerate}
\item $\sum_{f\in\Gamma'}\delta(f)=w$,
\item for $1\leq i\leq t$ and $f\in\Gamma'$, $f(i)\neq \infty$,
\item for $1\leq i\leq t$, $\sum_{f\in \Gamma', \, f(i)=1}\delta(f)=n_i$,
\item for $t+1\leq i \leq t+s$, either $\sum_{f\in \Gamma', \, f(i)=1}\delta(f)\geq l_i$ or there
exists $f\in\Gamma'$ such that $f(i) = \infty$.
\end{enumerate}

The first condition ensures that the set $X$ is finite and not empty.

Recall that, for each maximal ideal $\mfrak{m}$, $N_\gamma(\mfrak{m}) = R_{\mfrak{m}}/\gamma\mfrak{m}R_{\mfrak{m}}$. For each $f\in \Gamma$ we define $\Delta^f\in\mathbb{B}$, so that a maximal ideal $\mfrak{m}$ satisfies $\Delta^f$ if and only if, for $i = 1, \ldots, t+s$,
\begin{enumerate}
\item[($S1$)] $f(i)=0$ implies $\vert \, \phi_i \, / \, \psi_i \, (N_\gamma(\mfrak{m}))\, \vert=1$,
\item[($S2$)] $f(i)=1$ implies $\phi_i/\psi_i$ is an $N_\gamma(\mfrak{m})$-minimal pair,
\item[($S3$)] $f(i)=\infty$ implies $\vert \, \phi_i \, / \, \psi_i \, (N_\gamma(\mfrak{m})) \, \vert>1$ and $\phi_i/\psi_i$ is not an $N_\gamma(\mfrak{m})$-minimal pair,
\item[($S4$)] $\vert \, \phi_{2,j} \, / \, \psi_{2, j} \, (N_\gamma(\mfrak{m})) \, \vert = 1$ for $1 \leq j \leq u$.
\end{enumerate}

This can be done using Lemmas \ref{relmaxideal}, \ref{condpp}, (1) and (2), and \ref{condpp1}, (1) and (2). Note that the conditions in ($S$4) do not depend of $f$, and yet
are assumed to be part of $\Delta^f$.

We claim that there exists $N \in S_\gamma$ satisfying the sentences $\vert \, x=x \, / \, x\gamma=0 \, \vert =p^w$ and
\[\bigwedge_{i=1}^{t}\vert\phi_i/\psi_i\vert=p^{n_i}
\wedge\bigwedge_{j=1}^u \vert \phi_{2,j} / \psi_{2,j} \vert=1
\wedge\bigwedge_{i=t+1}^s\vert\phi_i/\psi_i\vert\geq p^{l_i} \] if and only if there exists $(\Gamma',\delta)\in X$ such that $(p,\delta(f),\Delta^f)\in \text{PP}_0 (R)$ for all $f\in \Gamma'$ and $w = \sum_{f \in \Gamma'} \delta (f)$. Since, by Lemma \ref{PPrecimpPP1rec}, $PP^\star (R)$ recursive implies $PP_0(R)$ recursive, this is enough to prove the proposition.

We first prove the forward direction. Suppose $N\in S_\gamma$ satisfies the required sentences. By definition $N= \oplus_{h=1}^mN_\gamma(\mfrak{m}_h)$ for some maximal ideals $\mfrak{m}_1,\ldots,\mfrak{m}_m$. Recall
that $N$ satisfies $\vert \, x=x \, / \, x\gamma=0 \, \vert =p^w$. On the other hand, for each $h=1, \ldots, m$, $\vert  N_\gamma (\mfrak{m}_h) \, / \, (x\gamma=0) (N_\gamma(\mfrak{m}_h))\vert =\vert R_{\mfrak{m}_h} / \mfrak{m}_h R_{\mfrak{m}_h} \vert = \vert R / \mfrak{m}_h \vert$, whence $\vert R/\mfrak{m}_h\vert$ is finite and indeed a power of $p$. For $1\leq h\leq m$, put $\vert R/\mfrak{m}_h\vert=p^{k_h}$. Then
$\sum_{h=1}^m k_h=w$.

Each $\mfrak{m}_h$ satisfies $\Delta^f$ for exactly one $f\in\Gamma$ because
$\vert \, \phi_{2j} \, / \, \psi_{2,j} (N_\gamma(\mfrak{m}_h)) \, \vert=1$ for $1\leq j\leq u$ and $\Delta^f$ simply specifies, for $1 \leq i \leq t+s$ and $\mfrak{m}$ a maximal ideal, whether $\vert \, \phi_i \, / \, \psi_i \, (N_\gamma(\mfrak{m})) \, \vert=1$, $\phi_i/\psi_i$ is an $N_\gamma(\mfrak{m})$-minimal pair or neither of these things is true. Let $\Gamma'$ be the set of $f\in\Gamma$ such that $\mfrak{m}_h$ satisfies $\Delta^f$ for some $1\leq h\leq m$. Since $\vert\phi_i / \psi_i \, (N) \vert$ is finite for $1\leq i\leq t$, for all $1\leq h\leq m$, $\vert \phi_i \, / \, \psi_i \, (N_\gamma(\mfrak{m}_h)) \vert$ is finite. Therefore $f\in\Gamma'$ implies $f(i)\neq \infty$ for $1\leq i\leq t$.

For each $f\in\Gamma'$, let $H_f$ be the set of $1\leq h\leq m$ such that $\mfrak{m}_h$ satisfies $\Delta^f$.
Define $\delta:\Gamma'\rightarrow \N$ by setting $\delta(f):=\sum_{h\in H_f}k_h$
for every $f\in\Gamma'$.

We show that $(\Gamma',\delta)\in X$. We have already seen that $w=\sum_{h=1}^mk_h$. Since for each $1\leq h\leq m$, $h\in H_f$ for exactly one $f\in\Gamma'$, $\sum_{f\in\Gamma'}\delta(f)=\sum_{h=1}^mk_h=w$. So $\delta$ satisfies condition (1).

We have already proved that $\Gamma'$ satisfies condition (2). So let us pass to (3).

Let $1\leq i\leq t$. Since $N$ satisfies $\vert\phi_i (N) /\psi_i(N)\vert=p^{n_i}$, Lemma \ref{finiteimpminpair} implies that, for all $1\leq h\leq m$, either $\phi_i/\psi_i$ is an $N_\gamma(\mfrak{m}_h)$-minimal pair or $\vert\phi_i \, / \, \psi_i \, (N_\gamma(\mfrak{m}_h))\vert =1$. Let $T_i$ be the set of $1\leq h\leq m$ such that $\phi_i/\psi_i$ is a $N_\gamma(\mfrak{m}_h)$-minimal pair. So $\sum_{h\in T_i} k_h=n_i$. Thus
\[\sum_{\substack{f\in\Gamma' \\ f(i)=1}}\delta(f)=\sum_{\substack{f\in\Gamma' \\ f(i)=1}}\sum_{h\in H_f}k_h=\sum_{h\in T_i} k_h=n_i.\] So $\delta$ satisfies condition (3).

Finally let us deal with (4). Let $t+1\leq i\leq t+s$. If there exists some $1\leq h\leq m$ such that $\phi_i/\psi_i$ is not an $N_\gamma(\mfrak{m}_h)$-minimal pair and $\vert \phi_i  \, / \, \psi_i \, (N_\gamma(\mfrak{m}_h))\vert\neq 1$ then there is an $f\in\Gamma'$ such that $f(i)=\infty$. In this case, $(\Gamma',\delta)$ satisfies condition (4). So suppose that for all $1\leq h\leq m$, $\phi_i/\psi_i$ is an $N_\gamma(\mfrak{m}_h)$-minimal pair or $\vert \phi_i \, / \, \psi_i \, (N_\gamma(\mfrak{m}_h))\vert= 1$. Let $T_i$ be the set of $1\leq h\leq m$ such that $\phi_i/\psi_i$ is a $N_\gamma(\mfrak{m}_h)$-minimal pair. So $\sum_{h\in T_i} k_h\geq l_i$. Thus
\[\sum_{\substack{f\in\Gamma' \\ f(i)=1}}\sum_{h\in H_f}k_h=\sum_{h\in T_i} k_h\geq l_i.\] So $\delta$ satisfies condition (4).

We now just need to confirm that $(p,\delta(f),\Delta^f)\in \text{PP}_0 (R)$ for all $f\in \Gamma'$. By definition $\delta(f)=\sum_{h\in H_f}k_h$. So $\delta(f)\in \text{Span}_{\N_0}\{k_h \st h\in H_f\}$. By definition for each $h\in H_f$, $\mfrak{m}_h$ satisfies $\Delta^f$. So $(p,\delta(f),\Delta^f)\in \text{PP}_0 (R)$ for all $f\in \Gamma'$.

We now prove the reverse direction. Suppose that there exists a pair $(\Gamma',\delta)\in X$ such that $(p,\delta(f),\Delta^f)\in \text{PP}_0 (R)$ for all $f\in \Gamma'$ and $w = \sum_{f \in \Gamma'} \delta (f)$. Using the definition of $\text{PP}_0 (R)$, for each $f\in \Gamma'$, pick maximal ideals $\mfrak{m}^f_1,\ldots \mfrak{m}^f_{m_f}$ such that $\mfrak{m}^f_h$ satisfies $\Delta^f$ for $1\leq h\leq m_f$ and $\delta(f)\in\text{Span}_{\N_0}\{k_h^f \st 1\leq h\leq m_f\}$ where $\vert R/\mfrak{m}^f_h\vert=p^{k_h^f}$. For $1 \leq h \leq m$, let $\lambda_h\in\N_0$ be such that $\delta(f)=\sum_{h=1}^{m_f}\lambda_hk_h^f$.

Let $N^f:=\oplus_{h=1}^{m_f}N_\gamma(\mfrak{m}_h^f)^{\lambda_h}$.

Note that $\vert x=x \, / \, x \gamma = 0 \, (N^f) \vert = p^{\delta(f)}$.

By definition of $\Delta^f$, for all $1 \leq j \leq u$ and $1\leq h\leq m_f$, $\vert \phi_{2,j} \, / \psi_{2,j}$ $(N_\gamma(\mfrak{m}_h^f))\vert=1$. Thus $\vert \phi_{2,j} \, / \, \psi_{2,j} \, (N^f) \vert = 1$ for all $1\leq j\leq u$.
Again by definition of $\Delta^f$, if $1\leq i\leq t+s$ and $f(i)=0$, then $\vert \phi_i /\psi_i \, (N^f)\vert=1$. If $1\leq i\leq t+s$ and $f(i)=1$, then $\phi_i/\psi_i$ is an $N_\gamma(\mfrak{m}_h)$-minimal pair for all $1\leq h\leq m_f$. Thus $\vert\phi_i / \psi_i \, (N^f)\vert=p^{\sum_{h=1}^{m_f}\lambda_hk_h^f}=p^{\delta(f)}$. Finally, if $f(i)=\infty$ then $\vert\phi_i  \, / \, \psi_i \, (N_\gamma(\mfrak{m}_h))\vert\neq 1$ and $\phi_i/\psi_i$ is not a $N_\gamma(\mfrak{m}_h)$-minimal pair for $1\leq h\leq m_f$. Thus $\vert\phi_i \, / \, \psi_i \, (N_\gamma(\mfrak{m}_h))\vert$ is infinite. Therefore, if $f(i)=\infty$ then $\vert\phi_i / \psi_i \, (N^f)\vert$ is infinite.

Let $N:=\oplus_{f\in\Gamma'} N^f$.

For $1\leq j\leq u$, $\vert\phi_{2,j} / \psi_{2,j} (N)\vert=1$ since $\vert\phi_{2,j} \, / \, \psi_{2,j} \, (N^f) \vert = 1$ for each $f\in\Gamma'$.

Let $1\leq i\leq t$. If $f\in\Gamma'$, $f(i)\neq \infty$. Since $(\Gamma',\delta)\in X$, $\sum_{i\in\Gamma' \ f(i)=1}\delta(f)=n_i$. Thus

\[\vert \phi_i /\psi_i(N)\vert=\prod_{f\in\Gamma'}\vert\phi_i \, / \, \psi_i \, (N^f)\vert = \] \[ \prod_{\substack{f\in\Gamma' \\ f(i)=1}}\vert\phi_i \, / \, \psi_i \, (N^f)\vert=\prod_{\substack{f\in\Gamma' \\ f(i)=1}} p^{\delta(f)}=p^{n_i}.\]

Let $t+1\leq i\leq t+s$. If $f(i)=\infty$ for some $f\in\Gamma'$ then $\vert\phi_i \, / \, \psi_i \, (N^f)\vert$ is infinite and hence $\vert\phi_i / \psi_i(N)\vert$ is infinite. So $\vert\phi_i / \psi_i(N)\vert\geq l_i$. So suppose that $f(i)\neq \infty$ for all $f\in\Gamma'$. That $\vert \phi_i / \psi_i(N)\vert \geq l_i$ now follows as in the previous paragraph.

Finally
$$ \vert x=x \, / \, x \gamma = 0 \, (N) \vert = \prod_{f \in \Gamma'}
\vert x=x \, / \, x \gamma = 0 \, (N^f) \vert =
$$
$$
= \prod_{f \in \Gamma'} p^{\delta (f)} = p^{\sum_{f \in \Gamma'} \delta (f)} = p^w.
$$
Thus we have shown that $N$ satisfies the required sentences.

This concludes the proof of (a).

For (b), when we define $\Delta^f$ we need to add the conditions $\beta\in M$ and $\eta\in M$ and use Lemmas \ref{condpp}, \ref{condpp1}, (1)-(2), with $\gamma=1$.

For (c),  when we define $\Delta^f$ we need to add the conditions $\beta \in M$, $\eta\in M$ and use Lemmas \ref{condpp}, \ref{condpp1}, (3)-(4).
\end{proof}

\section{The main theorem}\label{main}

\begin{theorem}\label{key2bis}
Let $R$ be an effectively given Pr\"ufer domain such that each localization of $R$ at a maximal ideal has dense value group. If both $\text{DPR}^\star (R)$ and $\text{PP}^\star (R)$ are recursive, then $T_R$ is decidable.
%
\end{theorem}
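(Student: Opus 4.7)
The plan is to apply the reduction of Theorem \ref{reduction} first, then to isolate the finite contributions to the Baur-Monk invariants using Proposition \ref{prepp}, and finally to handle the ``infinite or trivial'' residual part by invoking the first-step algorithm of \cite[$\S$7]{GLPT} whose effectiveness is guaranteed by $\text{DPR}^\star(R)$ being recursive.

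By Theorem \ref{reduction}, it suffices to decide sentences of the form
\[\chi \ := \ \bigwedge_{i=1}^t |\phi_{1,i}/\psi_{1,i}| = H_i \ \wedge \ \bigwedge_{j=1}^u |\phi_{2,j}/\psi_{2,j}| = 1 \ \wedge \ \bigwedge_{k=1}^s |\phi_{3,k}/\psi_{3,k}| \geq E_k,\]
with $H_i, E_k \geq 2$ and every pp-pair of the form $xb=0\,/\,c|x$ or $x=x\,/\,xd=0$. If $\chi$ has a model, it has a finite direct sum of indecomposable pure injectives as a model; one may write such a candidate as $M = M_0 \oplus M_{\mathrm{fin}}$, where $M_{\mathrm{fin}}$ collects those summands of the form $\text{PE}(R_\mfrak{m}/\gamma\mfrak{m}R_\mfrak{m})$ or $\text{PE}(\mfrak{m}R_\mfrak{m}/\delta R_\mfrak{m})$ with $R/\mfrak{m}$ finite, and $M_0$ collects all other summands. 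By Lemma \ref{finiteimpminpair} and the discussion opening $\S$\ref{prep}, every pp-pair occurring in $\chi$ has index $1$ or $\infty$ on each summand of $M_0$, so each finite invariant of $M$ is realized entirely inside $M_{\mathrm{fin}}$.

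Next I would enumerate. By Lemma \ref{condpp1}, the values of $\gamma$ and of $\beta\eta$ for which $N_\gamma(\mfrak{m})$ or $N'_{\beta,\eta}(\mfrak{m})$ can contribute a non-trivial minimal pair to $\chi$ are forced, up to a unit of $R_\mfrak{m}$, by the scalars $d$ and the products $bc$ appearing in the pp-pairs of $\chi$; only finitely many of these occur. I would factor each $H_i$ in all possible ways into prime powers $\prod_p p^{n_{i,p}}$, choose for each $E_k$ and each such prime $p$ a tentative $p$-power contribution $p^{l_{k,p}}$ (or declare an ``infinite $p$-contribution''), and distribute these exponents across the finitely many relevant triples $(\gamma,\mfrak{m})$ and $(\beta,\eta,\mfrak{m})$ organized by residue characteristic. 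For each consistent distribution, Proposition \ref{prepp}, whose effectiveness rests on $\text{PP}^\star(R)$ being recursive, decides whether a corresponding $M_{\mathrm{fin}}$ exists in $S_\gamma$, $S'_{\beta,\eta}$, or $T_{\beta,\eta}$ with the prescribed invariants.

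Having pinned down $M_{\mathrm{fin}}$, the residual requirement on $M_0$ becomes: each $|\phi_{1,i}/\psi_{1,i}(M_0)| = 1$ (because $M_{\mathrm{fin}}$ alone already realizes $H_i$, and $M_0$ can only multiply by $1$ or $\infty$), each $|\phi_{2,j}/\psi_{2,j}(M_0)| = 1$, and for each $k$ with $|\phi_{3,k}/\psi_{3,k}(M_{\mathrm{fin}})| < E_k$, $M_0$ must open the pair $\phi_{3,k}/\psi_{3,k}$. This is a Ziegler basic-open containment problem over $\Zg_R$, involving only trivial and infinite indices, and is decidable under the assumption that $\text{DPR}^\star(R)$ is recursive by the first-step algorithm of \cite[$\S$7]{GLPT}. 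Aggregating the answers across the finitely many choices made in the enumeration step yields the decision procedure for $\chi$. The main obstacle is organizational rather than algebraic: one must ensure that the enumeration of finite-invariant distributions across primes and maximal-ideal ``types'' is genuinely finite and uniformly computable from $\chi$, and that the translation between the constraints on $M_{\mathrm{fin}}$ and the residual constraints on $M_0$ is faithful.
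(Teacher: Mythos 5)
Your proposal is essentially correct and relies on the same technical pillars as the paper (Theorem \ref{reduction}, Lemma \ref{finiteimpminpair}, Lemma \ref{condpp1}, Proposition \ref{prepp}, and the exponent-one algorithm of \cite[\S 7]{GLPT}), but it organizes the decision procedure differently. You propose a global, ``one-shot'' enumeration: split $M = M_0 \oplus M_{\mathrm{fin}}$, enumerate all consistent distributions of prime-power exponents across the finitely many candidate families $S_\gamma$, $S'_{\beta,\eta}$, $T_{\beta,\eta}$ (with $\gamma$ ranging over the $d$'s and $\beta\eta$ over the products $bc$ appearing in $\chi$), check each distribution against Proposition \ref{prepp}, and hand the residual to the DPR$^\star$ algorithm. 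The paper instead proceeds by descent on a single numerical parameter, the exponent $\prod_i H_i$: it isolates the $p$-part of a single invariant $H_1$ for a single prime $p$, realizes that contribution in modules from exactly one family of the form $S_\gamma$ (Subcase 2.2) or $S'_{\beta,\eta}$ together with $T_{\beta,\eta}$ (Subcase 2.1), produces sentences $\sigma_f$, $\sigma_g$, $\sigma'_{(f,g)}$ with $\sigma'_{(f,g)}$ having strictly smaller exponent, and recurses until the exponent reaches $1$. The payoff of the paper's route is that at each stage only one $\gamma$ (or one $(\beta,\eta)$) is in play, so Proposition \ref{prepp} applies directly and the bookkeeping of exponents and the sentence $|x{=}x\,/\,x\gamma{=}0| = p^w$ (resp.\ $|x\eta{=}0\,/\,\beta|x| = p^w$) required by \ref{prepp} is automatic; the enumeration over $\Omega_\sigma$ is manifestly finite and computable. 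Your route is conceptually natural and certainly fixable, but as written it leaves exactly the point you flag underspecified: $M_{\mathrm{fin}}$ mixes summands from distinct families $S_{\gamma_1}, S_{\gamma_2}, S'_{\beta,\eta},\ldots$, so you must further decompose $M_{\mathrm{fin}}$ by family, bound the total exponent $w$ required by Proposition \ref{prepp} for each family (which is bounded because each summand of $M_{\mathrm{fin}}$ must contribute a nontrivial factor to some finite $H_i$), and show the resulting joint enumeration over families, primes, and exponent splits is effectively finite. The paper's exponent induction is precisely the device that converts this global bookkeeping into a sequence of tractable local problems.
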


\begin{proof}
By Theorem \ref{reduction}, in order to prove that $T_R$ is decidable, it is enough that there is an algorithm which, given a conjunction $\sigma$ of invariants sentences
\begin{enumerate}
\item $| \phi_{1,i} / \psi_{1,i} | = H_i$, $1 \leq i \leq t$,
\item $| \phi_{2,j} / \psi_{2,j} | = 1$, $1 \leq j \leq u$,
\item $| \phi_{3,k} / \psi_{3,k} |\geq E_k$, $1 \leq k \leq s$,
\end{enumerate}
where $t$, $u$, $s$ are non negative integers, $H_i$ ($1 \leq i \leq s$) and $E_k$ ($1 \leq k \leq s$) are integers $> 1$ and all the involved pp-pairs have the form $x \eta = 0 \, / \, \beta |x$ or $x=x \, / \, x \gamma =0$
with $\beta, \eta, \gamma \in R$, answers whether there is some $R$-module $M$ satisfying $\sigma$.

For $\sigma$ a sentence as above, define the {\sl exponent} of $\sigma$ to be $\prod_{i=1}^tH_i$ if $(1)$ is non-empty and $1$ otherwise.

Our plan is to describe an algorithm for sentences of exponent $1$ and then explain how to algorithmically reduce to the exponent $1$ case.

\smallskip

\noindent
{\bf Case 1: the exponent of $\sigma$ is $1$.}

So (1) is empty. Then there exists an $R$-module satisfying $\sigma$ if and only if there exists an $R$-module satisfying
\[\sigma':= \bigwedge_{j=1}^u | \phi_{2,j} / \psi_{2,j} | =1 \wedge\bigwedge_{k=1}^s | \phi_{3,k} / \psi_{3,k} | >1 .\] This is because if $M$ satisfies $\sigma'$ then $M^{\aleph_0}$ satisfies $\sigma$. We may now proceed as in \cite[Theorem 7.1]{GLPT}.

\smallskip

\noindent
{\bf Case 2: the exponent of $\sigma$ is strictly greater than $1$.}

We now describe an algorithm which given a sentence $\sigma$ with exponent strictly greater than $1$ produces finitely many sentences $\sigma_1,\ldots,\sigma_l$ such that their exponents are strictly smaller than that of $\sigma$ and there exists an $R$-module satisfying $\sigma$ if and only if there exists an $R$-module satisfying one of the sentences $\sigma_1,\ldots,\sigma_l$.

Given a sentence $\sigma$, we can apply this algorithm finitely many times to produce sentences $\sigma_1,\ldots,\sigma_l$ with exponent $1$ such that $\sigma$ is true in some $R$-module if and only if one of the sentences $\sigma_1,\ldots,\sigma_l$ is true in some $R$-module. So we are done.

Let $p\in\mathbb{P}$ divide $H_1$ and $h_1\in \N$ be maximal such that $p^{h_1}|H_1$. We will deal with the cases when $\phi_{11}/\psi_{11}$ is of the form $x\eta=0 \, / \, \beta|x$ and $x=x \, / \, x\gamma=0$ separately.

\smallskip
\noindent
{\bf Subcase 2.1: $\phi_{11}/\psi_{11}$ is $x \eta=0 \, / \, \beta|x$.}

Let $\Omega_{\sigma}$ be the set of pairs $(f,g)$ of functions $f,g:\{1,\ldots,t+s\}\rightarrow \N_0\cup\{\infty\}$ such that
\begin{itemize}
\item $f(1)=0$ (respectively $g(1)=0$) implies $f(i)=0$ (respectively $g(i)=0$) for $1\leq i\leq t+s$,
\item $f(1)+g(1)=h_1$,
\item $p^{f(i)+g(i)}|H_i$ for $2\leq i\leq t$,
\item either $p^{f(t+k)}\leq E_k$ or $f(t+k)=\infty$ for $1\leq k \leq s$, and
\item either $p^{g(t+k)}\leq E_k$ or $g(t+k)=\infty$ for $1\leq k \leq s$.
\end{itemize}

For each $(f,g)\in \Omega_{\sigma}$, let $\sigma_f$ (respectively $\sigma_g$) be the conjunction of invariants sentences
\begin{enumerate}
\item $| \phi_{1,i} / \psi_{1,i} | = p^{f(i)}$ (respectively $=p^{g(i)}$) for $1 \leq i \leq t$,
\item $| \phi_{2,j} / \psi_{2,j} | = 1$ for $1 \leq j \leq u$,
\item $| \phi_{3,k} / \psi_{3,k} | = p^{f(t+k)}$ (respectively $=p^{g(t+k)}$) for $1 \leq k \leq s$ and $f(t+k)\neq \infty$ (respectively $g(t+k)\neq \infty$),
\item $| \phi_{3,k} / \psi_{3,k} | \geq p^{\left \lceil{\log_p E_k}\right \rceil }$ for $1 \leq k \leq s$ and $f(t+k)=\infty$ (respectively $g(t+k)=\infty$).
\end{enumerate}

Here, for $r$ a real number, $\lceil r \rceil$ denotes the minimal integer greater than or equal to $r$ .
For each $(f,g)\in\Omega_\sigma$, let $\sigma_{(f,g)}'$ be the conjunction of invariants sentences

\begin{enumerate}
\item $| \phi_{1,i} / \psi_{1,i} | =H_i/p^{f(i)+g(i)}$ for $1\leq i\leq t$,
\item $| \phi_{2,j} / \psi_{2,j} | = 1$ for $1 \leq j \leq u$,
\item  $| \phi_{3, k} / \psi_{3, k} | \geq \left \lceil{(E_k / p^{f(t+k)+g(t+k)})}\right \rceil$ for $1\leq k \leq s$, $f(t+k)\neq \infty$ and $g(t+k)\neq \infty$.
\end{enumerate}

Note that, for each $(f,g)\in\Omega_\sigma$, the exponent of $\sigma_{(f,g)}'$ is strictly less that the exponent of $\sigma$ since $p^{f(1)+g(1)}=p^{h_1}>1$.

If $p^{f(t+k)+g(t+k)}\geq E_k$ then $\left \lceil{(E_k/p^{f(t+k)+g(t+k)})}\right \rceil=1$. For that value of $k$, $| \phi_{3,k} / \psi_{3,k} | \geq \left \lceil{(E_k/p^{f(t+k)+g(t+k)})}\right \rceil$ is satisfied by all $R$-modules and so this condition may be removed.

Now we claim that  {\sl there exists an $R$-module $M$ satisfying $\sigma$ if and only if there exists $(f,g)\in\Omega_{\sigma}$, $N_f\in S'_{\beta,\eta}$ satisfying $\sigma_f$, $N_g\in T_{\beta,\eta}$ satisfying $\sigma_g$ and an $R$-module $M'$ satisfying $\sigma_{(f,g)}'$.}

The reverse direction follows directly from the definitions of $\sigma_f$, $\sigma_g$ and $\sigma_{(f,g)}'$.

So assume that there is an $R$-module $M$ satisfying $\sigma$. We may suppose that $M$ is a finite direct sum of indecomposable pure injective modules $\bigoplus_{h=1}^mN_h$. So $\prod_{h=1}^m |x\eta=0 \, / \, \beta|x \, (N_h)|=H_1$. In particular, $|x \eta=0 \, / \, \beta|x \, (N_h)|$ is finite for each $1\leq h\leq m$. For each $1\leq h\leq m$, $|x\eta=0 \, / \, \beta|x \, (N_h)|$ is either $1$ or $q^l$ for some prime $q$. If $|x\eta =0 \, / \, \beta|x \, (N_h)|=q^l$ for some $l\in\N$ then, by Lemma \ref{finiteimpminpair}, $x\eta=0 \, / \, \beta|x$ is an $N_h$-minimal pair. So $N_h$ is either the pure injective hull of $R/\mfrak{m}_{h}$ or $\mfrak{m}_{h}R_{\mfrak{m}_{h}}/\beta\eta R_{\mfrak{m}_{h}}$ for some maximal ideal $\mfrak{m}_h$ with $\beta,\eta\in \mfrak{m}_h\backslash\{0\}$ and such that $|R/\mfrak{m}_h|=q^l$.

Let $M'$ be the direct sum of the modules $N_h$ for $1\leq h\leq m$ such that $p$ does not divide $|x\eta =0 \, / \, \beta|x \, (N_h)|$. Let $L$ (respectively $N$) be the direct sum of the modules $N_h$ for $1\leq h\leq m$ such that $p$ divides $|x\eta=0 \, / \, \beta|x \, (N_h)|$ and $N_h$ the pure injective hull of $R/\mfrak{m}_{h}$ with $\beta,\eta \in\mfrak{m}_h$ (respectively $\mfrak{m}_{h}R_{\mfrak{m}_{h}}/\beta \eta R_{\mfrak{m}_{h}}$ with $\beta,\eta\in\mfrak{m}_h\backslash\{0\}$).

Note that $|x\eta=0 \, / \, \beta|x(L\oplus N)|=p^{h_1}$ and for any pp-pair $\phi/\psi$, $|\phi / \psi(L\oplus N)|$ is either $1$, a power of $p$ or infinite. Moreover, if $|x\eta=0 \, / \, \beta|x \, (L)|=1$ (respectively $|x\eta=0 \, / \, \beta|x \, (N)|=1$) then $L=0$ (respectively $N=0$).

Define $(f,g)\in \Omega_\sigma$ by setting \[f(i)= \log_p|\phi_{1i}/\psi_{1i}(N)| \text{ and } g(i)=\log_p|\phi_{1i}/\psi_{1i}(L)|\] for $1\leq i\leq t$ and, for $1 \leq k \leq s$,
\[f(t+k)=\left\{
           \begin{array}{ll}
             \log_p|\phi_{3,k}/\psi_{3,k}(N)|, & \hbox{if $|\phi_{3,k}/\psi_{3,k}(N)|\leq E_k$,} \\
             \infty, & \hbox{otherwise}
           \end{array}
         \right.
\] and
\[g(t+k)=\left\{
           \begin{array}{ll}
             \log_p|\phi_{3,k}/\psi_{3,k}(L)|, & \hbox{if $|\phi_{3,k}/\psi_{3,k}(L)|\leq E_k$,} \\
             \infty, & \hbox{otherwise.}
           \end{array}
         \right.
\]

Then $N$ satisfies $\sigma_f$ and $L$ satisfies $\sigma_g$. That $(f,g)\in \Omega_{\sigma}$ follows from the definition of $N$ and $L$ and the above discussion.  We now just need to check that $M'$ satisfies $\sigma_{(f,g)}'$.
For $1\leq i\leq t$, \[\vert \phi_{1,i}/\psi_{1,i}(N) \vert\cdot \vert\phi_{1,i}/\psi_{1,i}(L) \vert\cdot \vert\phi_{1,i}/\psi_{1,i}(M')\vert = \] \[= p^{f(i)+g(i)}\cdot\vert\phi_{1i}/\psi_{1i}(M')\vert=H_i.\]

So $M'$ satisfies the invariants sentences in $(1)$ of the definition of $\sigma_{(f,g)}'$.

Finally, suppose $1\leq k \leq s$, $f(t+k)\neq \infty$ and $g(t+k)\neq \infty$. Then
\[p^{f(t+k)+g(t+k)}\cdot\vert\phi_{3,k}/\psi_{3,k}(M')\vert = \] \[ = \vert \phi_{3,k}/\psi_{3,k}(N) \vert\cdot \vert\phi_{3,k}/\psi_{3,k}(L) \vert\cdot \vert\phi_{3,k}/\psi_{3,k}(M')\vert\geq E_k.\]
Therefore $\vert\phi_{3,k}/\psi_{3,k}(M')\vert\geq \left \lceil{E_k/p^{f(t+k)+g(t+k)}}\right \rceil$. So $M'$ satisfies the invariants sentences in $(2)$ of the definition of $\sigma_{(f,g)}'$.

Thus we have proved the claim.

By definition, $N$ is elementary equivalent to a module in $S'_{\beta,\eta}$ and $L$ is elementary equivalent to a module in $T_{\beta,\eta}$. If $f(1)=0$ (respectively $g(1)=0$) then $\sigma_f$ (respectively $\sigma_g$) holds for the zero module. If $f(1)\neq 0$ then, by Proposition \ref{prepp}(c), there is an algorithm which given the sentence $\sigma_f$ answers whether there exists $N\in S'_{\beta,\eta}$ satisfying $\sigma_f$. Similarly, if $g(1)\neq 0$ then, by Proposition \ref{prepp}(b), there is an algorithm which given the sentence $\sigma_g$ answers whether there exists $L\in T_{\beta,\eta}$ satisfying $\sigma_g$.

Now $\sigma$ is true in some $R$-module if and only if there exists $(f,g)\in \Omega_{\sigma}$ such that $\sigma_f$ is true in some module in $S'_{\beta,\eta}$ and $\sigma_g$ is true in some module in $T_{\beta,\gamma}$ and $\sigma_{(f,g)}'$ is true in some $R$-module.

\smallskip
\noindent
{\bf Subcase 2.2: $\phi_{11}/\psi_{11}$ is $x=x \, / \, x\gamma=0$.}

Let $\Omega_{\sigma}$ be the set of functions $f:\{1,\ldots,t+s\}\rightarrow \N_0\cup\{\infty\}$ such that
\begin{itemize}
\item $f(1)=h_1$,
\item $p^{f(i)}|H_i$ for $2\leq i\leq t$,
\item either $p^{f(t+k)}\leq E_k$ or $f(t+k)=\infty$ for $1\leq k \leq s$.
\end{itemize}

For each $f\in \Omega_{\sigma}$, let $\sigma_f$ be the conjunction of invariants sentences
\begin{enumerate}
\item $| \phi_{1,i} / \psi_{1,i} | = p^{f(i)}$, for $1 \leq i \leq t$,
\item $| \phi_{2,j} / \psi_{2,j} | = 1$ for $1 \leq j \leq u$,
\item $| \phi_{3,k} / \psi_{3,k} | = p^{f(t+k)}$, if $1 \leq k \leq s$ and $f(t+k)\neq \infty$,
\item $| \phi_{3,k} / \psi_{3,k} | \geq p^{\left \lceil{\log_p E_k}\right \rceil }$, if $1 \leq k \leq s$ and $f(t+k)=\infty$.
\end{enumerate}

For each $f\in\Omega_\sigma$, let $\sigma_{f}'$ be the conjunction of invariants sentences

\begin{enumerate}
\item $| \phi_{1,i} / \psi_{1,i} | =H_i/p^{f(i)}$,
\item $| \phi_{2,j} / \psi_{2,j} | = 1$ for $1 \leq j \leq u$,
\item  $| \phi_{3,k} / \psi_{3,k} | \geq \left \lceil{(E_k / p^{f(t+k)})}\right \rceil$ for $f(t+k)\neq \infty$.
\end{enumerate}

Note that if $p^{f(t+k)} \geq E_k$ then $\left \lceil{(E_k / p^{f(t+k)})}\right \rceil=1$ and so the last condition for that value of $k$ is satisfied by all $R$-modules.

We claim that {\sl there exists an $R$-module $M$ satisfying $\sigma$ if and only if there exist $f\in\Omega_{\sigma}$, $N_f\in S_{\gamma}$ satisfying $\sigma_f$, and an $R$-module $M'$ satisfying $\sigma_f'$}.

The proof of this claim is as the previous one except we use the fact that the only indecomposable pure injective $R$-modules $N$ such that $x=x \, / \, x\gamma=0$ is an $N$-minimal pair are the pure injective hulls of $N_\gamma(\mfrak{m})$.

We can now use Proposition \ref{prepp}(a) instead of \ref{prepp}(b) and (c) to get the required algorithm.
\end{proof}

Recall that, over a B\'ezout domain $R$,
if $\text{PP}(R)$ is recursive then $\text{PP}^\star (R)$ is recursive and if
$\text{DPR}(R)$ is recursive then $\text{DPR}^\star (R)$ is recursive (see $\S$\ref{intro}). Moreover, \cite[6.4]{GLPT}, if $T_R$ is decidable then $\text{DPR}(R)$ is recursive.

When $R$ is a B\'ezout domain, we obtain the following theorem as a corollary to Theorem \ref{key2bis} and Theorem \ref{key1bis}.

\begin{theorem}\label{key33}
Let $R$ be an effectively given B\'ezout domain such that each localization of $R$ at a maximal ideal has dense value group. Then $T_R$ is decidable if and only if both $\text{DPR} (R)$ and $\text{PP}(R)$ are recursive.
\end{theorem}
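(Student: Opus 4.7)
The plan is to obtain this theorem as a corollary to Theorems \ref{key2bis} and \ref{key1bis} combined with the two B\'ezout equivalences already recalled in $\S$\ref{intro}: namely, for $R$ an effectively given B\'ezout domain, $\text{DPR}(R)$ is recursive if and only if $\text{DPR}^\star(R)$ is, and $\text{PP}(R)$ is recursive if and only if $\text{PP}^\star(R)$ is. No new heavy machinery is required; the argument is pure assembly, so the real content to write down is the uniformity in $l$ of the reduction from $\text{DPR}_l$, $\text{PP}_l$ to $\text{DPR}$, $\text{PP}$.

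For the forward direction I would proceed as follows. Assume $T_R$ is decidable. Since a B\'ezout domain is a Pr\"ufer domain, \cite[6.4]{GLPT} yields at once that $\text{DPR}(R)$ is recursive. Applying Theorem \ref{key1bis} to $R$ viewed as a Pr\"ufer domain, $\text{PP}^\star(R)$ is recursive; and since a tuple $(p,n,c_1,\ldots,c_l,d)$ lies in $\text{PP}_l(R)$ exactly when $(p,n,c,d)\in\text{PP}(R)$, where $c$ is the greatest common divisor of $c_1,\ldots,c_l$ (membership in a maximal ideal of a finitely generated ideal being equivalent to membership of its generator), $\text{PP}(R)=\text{PP}_1(R)$ is recursive as a specialization.

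For the backward direction, assume both $\text{DPR}(R)$ and $\text{PP}(R)$ are recursive. The plan is to upgrade these recursivity statements uniformly in $l$ to obtain that $\text{DPR}^\star(R)$ and $\text{PP}^\star(R)$ are recursive, and then invoke Theorem \ref{key2bis}. For the uniform reduction, given a tuple $(a,b_1,\ldots,b_l,c,d_1,\ldots,d_l)$, use the B\'ezout identity iteratively $l-1$ times to effectively compute $b=\gcd(b_1,\ldots,b_l)$ and $d=\gcd(d_1,\ldots,d_l)$; by the observation in $\S$\ref{intro}, the original tuple lies in $\text{DPR}_l(R)$ if and only if $(a,b,c,d)\in\text{DPR}(R)$. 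Because $R$ is effectively given, the iterated B\'ezout computation is algorithmic in the input, so a decision procedure for $\text{DPR}(R)$ yields a single algorithm deciding $\text{DPR}^\star(R)$. The same strategy, applied to the generators $c_1,\ldots,c_l$ in the definition of $\text{PP}_l(R)$, reduces $\text{PP}^\star(R)$ to $\text{PP}(R)$. Theorem \ref{key2bis}, under the dense value group hypothesis, then delivers decidability of $T_R$.

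There is essentially no hard step: the only thing to be careful about is that the reductions from the $l$-indexed sets to their $1$-indexed counterparts are effectively uniform in $l$, which is precisely what the B\'ezout identity (together with $R$ being effectively given) guarantees. Everything else is an invocation of the two main Pr\"ufer theorems proved earlier in the paper and of \cite[6.4]{GLPT}.
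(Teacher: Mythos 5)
Your proposal is correct and follows essentially the same route as the paper: the forward direction via \cite[6.4]{GLPT} and Theorem \ref{key1bis}, and the backward direction by uniformly reducing $\text{DPR}^\star(R)$ and $\text{PP}^\star(R)$ to $\text{DPR}(R)$ and $\text{PP}(R)$ using greatest common divisors (exactly the observations recalled in $\S$\ref{intro}) and then invoking Theorem \ref{key2bis}. The only difference is that you spell out the uniform-in-$l$ gcd reduction explicitly, whereas the paper leaves it as an earlier remark.
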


The following remark follows directly from the definition of $\text{PP}(R)$.

\begin{remark}\label{constantres}
Let $q\in \mathbb{P}$ and $t\in\N$. Let $R$ be a Pr\"{u}fer domain such that for all maximal ideals $\mfrak{m}$, $|R / \mfrak{m}|=q^t$. Then $(p,n,c,d)\in \text{PP}(R)$ if and only if $p=q$, $t$ divides $n$ and there exists some maximal ideal $\mfrak{m}$ such that $c\in\mfrak{m}$ and $d\notin\mfrak{m}$.
\end{remark}
%

When the Krull dimension of $R$ is 1, we can say a bit more.

\begin{proposition}\label{kdim}
Let $R$ be an effectively given B\'ezout domain of Krull dimension 1 such that, when $\mfrak{m}$ ranges over the maximal ideals of $R$, either all the residue fields $R/ \mfrak{m}$ are infinite, or all the residue fields $R / \mfrak{m}$ are of the same finite cardinality and the localizations $R_\mfrak{m}$ have dense value group. Then $T_R$ is decidable.
\end{proposition}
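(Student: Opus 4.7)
The strategy is to split according to the two alternatives in the hypothesis and apply the appropriate decidability criterion; the Krull-dimension-one assumption is then used to reduce the verification of the combinatorial sets to the prime radical relation in $R$.

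In case (a), when all residue fields are infinite, the Pr\"ufer analogue for infinite residue fields (\cite[Theorem 7.1]{GLPT}) shows that $T_R$ is decidable as soon as $\text{DPR}^\star(R)$ is recursive; since $R$ is B\'ezout, this is equivalent to $\text{DPR}(R)$ being recursive. In case (b), with all residue fields of common finite cardinality $q^t$ and each $R_\mfrak{m}$ having dense value group, Theorem \ref{key33} applies directly: $T_R$ is decidable iff $\text{DPR}(R)$ and $\text{PP}(R)$ are both recursive. Thus in both cases the task reduces to showing that these combinatorial sets are recursive.

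In a Krull-dimension-one ring, two distinct maximal ideals are comaximal, so the only prime pairs $(\mfrak{p}, \mfrak{q})$ with $\mfrak{p} + \mfrak{q} \neq R$ arising in the definition of $\text{DPR}(R)$ are pairs involving the zero ideal or pairs of the form $(\mfrak{m}, \mfrak{m})$ with $\mfrak{m}$ maximal. After a routine analysis of the boundary cases (when some of $a, b, c, d$ vanish), the only non-trivial case is that of non-zero $b, d$ and $(\mfrak{m}, \mfrak{m})$: here the primality of $\mfrak{m}$ gives that $a \notin \mfrak{m}$ and $c \notin \mfrak{m}$ iff $ac \notin \mfrak{m}$, and the B\'ezout identity gives that $b, d \in \mfrak{m}$ iff $\gcd(b, d) \in \mfrak{m}$, whence $(a, b, c, d) \in \text{DPR}(R)$ reads $ac \in \text{rad}(\gcd(b, d) R)$. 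Similarly, in case (b), Remark \ref{constantres} reduces $\text{PP}(R)$-membership to the complementary condition $d \notin \text{rad}(cR)$ together with easy arithmetic checks on $(p, n)$.

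Thus the problem reduces to deciding, for given $a, b \in R$, whether $a \in \text{rad}(bR)$ holds. Since $R$ has Krull dimension one, $a \in \text{rad}(bR)$ is equivalent to the existence of $n \geq 1$ with $b \mid a^n$ (after trivial separate handling of $b = 0$ and $b$ a unit), a recursively enumerable condition by condition (3) in the definition of effective-givenness. The main obstacle is therefore to produce a recursive procedure for the complement, i.e., a finite certificate for $a \notin \text{rad}(bR)$. This is where the residue-field hypotheses enter: in case (a), the infinite-residue-field assumption allows a direct adaptation of the elimination techniques from \cite{GLPT}; in case (b), the uniformity $|R/\mfrak{m}| = q^t$ together with density of the value groups at each $R_\mfrak{m}$ provides the needed effective control, through iterated application of Tuganbaev's identity (Fact \ref{Tuganbaev}) and the Ziegler-spectrum analysis of \S\ref{prep}. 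Once the prime radical relation is shown recursive in either case, the appropriate main theorem completes the proof.
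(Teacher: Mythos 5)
Your high-level decomposition is sound: split according to the two hypotheses, in case (a) fall back to the infinite-residue-field machinery, and in case (b) invoke Theorem \ref{key33} and verify recursivity of $\text{DPR}(R)$ and $\text{PP}(R)$ using the Krull-dimension-one assumption. Your reduction of the $(\mfrak{m},\mfrak{m})$ case of $\text{DPR}(R)$ to $ac \in \rad(\gcd(b,d)R)$ and the $c \neq 0$ case of $\text{PP}(R)$ to $d \notin \rad(cR)$ is correct and essentially what the paper does (it delegates $\text{DPR}(R)$ to the proof of \cite[Corollary 6.7]{GLPT}).

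However, there is a genuine gap at the heart of your argument, where you try to justify that the prime radical relation is recursive. The paper simply cites \cite[Lemma 3.3]{LPT}, which asserts that for \emph{any} effectively given B\'ezout domain of Krull dimension $1$ the prime radical relation is recursive; this is a purely algebraic fact with no residue-field hypotheses. You instead sketch an argument in which the residue-field assumptions (infinite residue fields, or common cardinality $q^t$ with dense value groups), Tuganbaev's identity, and the Ziegler-spectrum material of \S\ref{prep} are supposed to furnish a finite certificate for $a \notin \rad(bR)$. This is not correct: those hypotheses play no role in the recursivity of the prime radical relation and the cited apparatus does not yield such a certificate. The residue-field assumptions are used only earlier, to invoke the appropriate decidability theorem, not to decide radical membership.

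There is also a second, smaller gap. Your reduction of $\text{PP}(R)$ to $d \notin \rad(cR)$ only handles $c \neq 0$. When $c = 0$, the condition becomes $d \notin \text{Jac}(R)$, and one must separately show that $\text{Jac}(R)$ is a recursive subset of $R$. The paper does this by observing that if $\text{Jac}(R) \neq 0$ and $a \in \text{Jac}(R)$ is nonzero, then $\rad(aR) = \text{Jac}(R)$ (because in Krull dimension $1$ every nonzero prime is maximal), so recursivity of $\text{Jac}(R)$ again falls out of recursivity of the prime radical relation. Your proposal omits this case entirely.
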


\begin{proof}
Note that the case of infinite residue fields is already treated in \cite[Corollary 6.7]{GLPT}. So we focus on the second case, when the residue fields of $R$ have a common finite size, $q^t$ say, where $q$ is a prime and $t$ is a positive integer. It suffices to prove that then both $\text{PP}(R)$ and $\text{DPR}(R)$ are recursive. It is shown in \cite[Lemma 3.3]{LPT} that, when $R$ is an effectively given B\'ezout domain with Krull dimension 1, the prime radical relation is recursive.

The argument for $\text{DPR}(R)$ is just the same as in the proof of \cite[Corollary 6.7]{GLPT}. So we just need to show that $\text{PP}(R)$ is recursive.

First we show that $\text{Jac}(R)$, the Jacobson radical of $R$, is a recursive subset of $R$. If $\text{Jac}(R)=0$ then it is finite and hence recursive. Suppose $\text{Jac}(R)\neq 0$. Fix $a\in \text{Jac}(R)$ non-zero. Then $a$ is contained in all maximal ideals and all non-zero prime ideals are maximal. Therefore $\rad(aR)=\text{Jac}(R)$. Since the prime radical relation is recursive, $\text{Jac}(R)$ is a recursive subset of $R$.

By \ref{constantres}, if all residue fields of $R$ are of the same finite size then $(p, n, c, d) \in \text{PP}(R)$ if and only if $t$ divides $n$ and there exists some maximal ideal $\mfrak{m}$ such that $c\in\mfrak{m}$ and $d\notin \mfrak{m}$. Therefore, it is enough to show that the set of pairs $(c,d)\in R^2$ such that there exists some maximal ideal $\mfrak{m}$ with $c\in\mfrak{m}$ and $d\notin \mfrak{m}$ is recursive.

Suppose $c\neq 0$. Then $c\in\mfrak{m}$ and $d\notin \mfrak{m}$ implies that $d\notin \rad(cR)$. Conversely, if $d\notin \rad(cR)$ then there exists some prime ideal $\mfrak{m}$ such that $c\in\mfrak{m}$ and $d\notin \mfrak{m}$. Since $c\neq 0$, $\mfrak{m}$ is non-zero and hence, since $R$ is a domain with Krull dimension $1$, $\mfrak{m}$ is maximal.

Now, if $c=0$ then $c$ is a member of all maximal ideals. So there exists a maximal ideal $\mfrak{m}$ such that $c\in\mfrak{m}$ and $d\notin \mfrak{m}$ if and only if $d\notin J(R)$.

We have shown that the set of $(p,n,c,d)\in\text{PP}(R)$ with $c\neq 0$ is recursive and that the set of $(p,n,0,d)\in\text{PP}(R)$ is recursive. So $\text{PP}(R)$ is recursive.
\end{proof}

\end{document}